\documentclass{article}

\usepackage[english]{babel}

\usepackage[letterpaper,top=2cm,bottom=2cm,left=3cm,right=3cm,marginparwidth=1.75cm]{geometry}

\usepackage{titlesec}

\titleformat*{\section}{\large\bfseries}

\usepackage{caption}

\captionsetup{font=small}

\usepackage[T1,T2A]{fontenc}
\usepackage[utf8]{inputenc}
\usepackage{amsmath,amssymb,amsthm,verbatim}
\usepackage{graphicx}
\usepackage{hyperref}
\usepackage{multicol} 
\usepackage{amsmath}
\usepackage{tikz}
\usepackage{mathdots}
\usepackage{yhmath}
\usepackage{cancel}
\usepackage{color}
\usepackage{siunitx}
\usepackage{array}
\usepackage{multirow}
\usepackage{amssymb}
\usepackage{enumerate}
\usepackage{gensymb}
\usepackage{tabularx}
\usepackage{booktabs}
\usetikzlibrary{fadings}
\usepackage[thinlines]{easytable}
\usepackage{float}

\theoremstyle{plain}
\newtheorem{theorem}{Theorem}[section]

\newtheorem{lemma}[theorem]{Lemma}
\newtheorem{cor}[theorem]{Corollary}
\newtheorem{prop}[theorem]{Proposition}
\newtheorem{claim}[theorem]{Claim}

\theoremstyle{definition}
\newtheorem{defn}[theorem]{Definition}

\title{\Large \textbf{Cardinal invariants related to density}}
\author{\normalsize David Valderrama}
\date{}

\begin{document}

\renewcommand{\abstractname}{\vspace{-\baselineskip}}

\maketitle

\begin{abstract}
\noindent \textit{Abstract.} We investigate some variants of the splitting, reaping, and independence numbers defined using asymptotic density. Specifically, we give a proof of Con($\mathfrak{i}<\mathfrak{s}_{1/2}$), Con($\mathfrak{r}_{1/2}<\mathfrak{b}$) and Con($\mathfrak{i}_*<2^{\aleph_0}$). This answers two questions raised in \cite{Halway2018}. Besides, we prove the consistency of $\mathfrak{s}_{1/2}^{\infty} < $ non$(\mathcal{E})$ and cov$(\mathcal{E}) < \mathfrak{r}_{1/2}^{\infty}$, where $\mathcal{E}$ is the $\sigma$-ideal generated by closed sets of measure zero.

\hspace{2mm}

\noindent \textit{Key words and phrases:} cardinal characteristics of the continuum, splitting number, reaping number, independence number, Hechler forcing, $\sigma$-ideal generated by closed sets of measure zero

\hspace{1mm}

\noindent \textit{2020 Mathematics Subject Classification:} 03E17, 03E35

\end{abstract}

\section{Introduction}

Given $S,X \in [\omega]^\omega$, we say that \textit{$S$ splits $X$} if $|X \cap S| = |X \setminus S | = \aleph_0$. The \textit{splitting number} $\mathfrak{s}$ is the least size of a splitting family $\mathcal{S}\subseteq[\omega]^\omega$, that is, every infinite subset of $\omega$ is split by a member of $\mathcal{S}$. For $X \in [\omega]^\omega$ and $0<n<\omega$, define the \textit{initial density} of $X$ up to $n$ as $d_n(X)=\frac{|X\cap n|}{n}$. In case of convergence of $d_n(X)$, call $d(X)=\lim_{n \to \infty} d_n(X)$ the \textit{asymptotic density} or just the \textit{density} of $X$. In \cite{Halway2018} the following variants of $\mathfrak{s}$ were introduced by using the notion of asymptotic density to characterize different intersection properties of infinite sets. 

\begin{defn} Let $S,X \in [\omega]^{\omega}$. 
\begin{itemize}
    \item $S$ \textit{bisects} $X$, written as $S |_{1/2} X$, if
    \[
        \lim_{n \to \infty} \frac{|S \cap X \cap n|}{|X\cap n|}= \lim_{n \to \infty} \frac{d_n(S\cap X)}{d_n(X)}= \frac{1}{2}
    \]
    \item For $0< \epsilon< 1/2$, \textit{$S$ $\epsilon$-almost bisects $X$}, written as $S \, |_{1/2 \pm \epsilon} \, X$, if for all but finitely many $n \in \omega$ we have
    \[
     \frac{|S \cap X \cap n|}{|X\cap n|}= \frac{d_n(S\cap X)}{d_n(X)} \in \left(\frac{1}{2}- \epsilon, \frac{1}{2}+ \epsilon\right)
    \]
    \item \textit{$S$ weakly bisects $X$}, written as $S \, |_{1/2}^{w} \, X$, if for any $\epsilon>0$, for infinitely many $n\in \omega$ we have
    \[
            \frac{|S \cap X \cap n|}{|X\cap n|}= \frac{d_n(S\cap X)}{d_n(X)} \in \left(\frac{1}{2}- \epsilon, \frac{1}{2}+ \epsilon\right)
    \]
    \item \textit{$S$ cofinally bisects $X$}, written as $S \, |_{1/2}^{\infty} \, X$, if for infinitely many $n\in \omega$ we have
    \[
            \frac{|S \cap X \cap n|}{|X\cap n|}= \frac{d_n(S\cap X)}{d_n(X)} = \frac{1}{2}
    \]
\end{itemize}
    
\end{defn}

\begin{defn}\label{variants of s}
 We say a family $\mathcal{S}$ of infinite sets is
\[
        \left\{ \begin{array}{c}
           \text{bisecting}   \\
           \text{$\epsilon$-almost bisecting}   \\
           \text{weakly bisecting}   \\
           \text{cofinally bisecting}    
        \end{array} \right.
    \]
    if for each $X \in [\omega]^\omega$ there is some $S \in \mathcal{S}$ such that
    \[
        \left\{ \begin{array}{c}
           \text{$S$ bisects $X$}   \\
           \text{$S$ $\epsilon$-almost bisects $X$}   \\
           \text{$S$ weakly bisects $X$}   \\
           \text{$S$ cofinally bisects $X$}    
        \end{array} \right.
    \]
    and denote the least cardinality of such a family by  $\mathfrak{s}_{1/2}, \mathfrak{s}_{1/2 \pm \epsilon}, \mathfrak{s}_{1/2}^{w}, \mathfrak{s}_{1/2}^{\infty}$, respectively.    
\end{defn}

Let $\mathcal{N}$ denote the ideal of Lebesgue null sets, $\mathcal{M}$ the ideal of meager sets, $\mathfrak{d}$ the dominating number, and $\mathfrak{b}$ the unbounding number (see \cite{Blass2010} for information on these concepts). Figure \ref{Inequalities between new variants of s} shows the
relations already proved between these new cardinals and other well-known cardinal characteristics.  It
also shows their behavior in some models obtained using classical forcing notions. The proofs of these results are in \cite{Halway2018}.

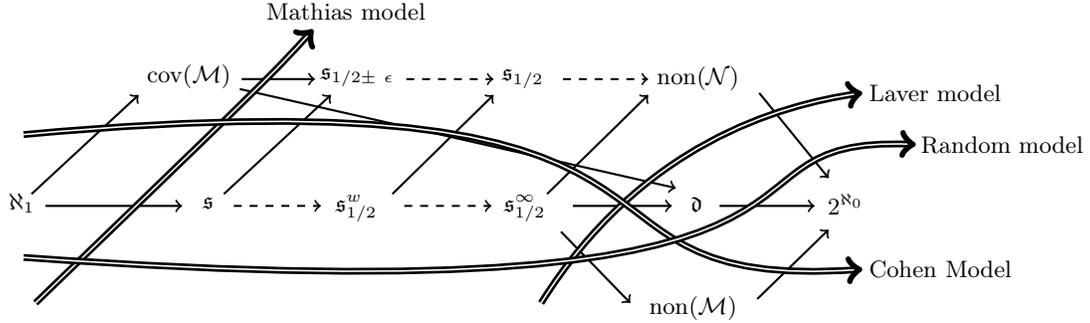
\begin{figure}[h!]
    \centering

\tikzset{every picture/.style={line width=0.75pt}} 

\begin{tikzpicture}[x=0.75pt,y=0.75pt,yscale=-1,xscale=1]

\draw [thick,->]   (80,131) -- (149,131) ;
\draw [thick,->]   (346,131) -- (396,131) ;
\draw [thick,->]   (340,144) -- (375,180) ;
\draw [thick,->]  (420,131) -- (468,131) ;
\draw [thick,->]   (440,74) -- (475,117) ;
\draw [thick,->]   (439,178) -- (475,143) ;
\draw [thick,->]   (178,72) -- (398,122) ;
\draw [dashed,->] (175,131) -- (219,131) ;
\draw  [dashed,->]  (261,131) -- (305,131) ;
\draw  [dashed,->]  (262,67) -- (306,67) ;
\draw  [dashed,->]  (341,67) -- (384,67) ;
\draw [thick,->]   (179,67) -- (216,67) ;
\draw  [thick,->]  (170,125) -- (224,75) ;
\draw  [thick,->]  (73,125) -- (127,75) ;
\draw [thick,->]   (255,125) -- (306,75) ;
\draw  [thick,->]  (333,125) -- (384,75) ;

\draw [line width=1][double,->]    (75,180) .. controls (195,64.09) .. (215,42) ;

\draw [line width=1][double,->]    (330,180) .. controls (380,100) and (450,80) .. (492,74) ;

\draw [line width=1][double,->]    (69,95) .. controls (450,60) and (320,175)..(492,163) ;

\draw [line width=1][double,->]   (69,157) .. controls (530,190) and (400,95) .. (519,100) ;

\draw (60,123) node [anchor=north west][inner sep=0.75pt]    {\small $\aleph _{1}$};
\draw (158,125) node [anchor=north west][inner sep=0.75pt]    {\small $\mathfrak{s} $};
\draw (225,124) node [anchor=north west][inner sep=0.75pt]    {\small $\mathfrak{s}_{1/2}^{w}$};
\draw (310,124) node [anchor=north west][inner sep=0.75pt]    {\small $\mathfrak{s}_{1/2}^{\infty }$};
\draw (404,125) node [anchor=north west][inner sep=0.75pt]    {\small $\mathfrak{d}$};
\draw (473.13,123) node [anchor=north west][inner sep=0.75pt]    {\small $2^{\aleph _{0}}$};
\draw (383,174.82) node [anchor=north west][inner sep=0.75pt]    {\small non$(\mathcal{M})$};
\draw (387,66.82) node [anchor=west] [inner sep=0.75pt]    {\small non$(\mathcal{N})$};
\draw (309,66.82) node [anchor=west] [inner sep=0.75pt]    {\small $\mathfrak{s}_{1/2}$};
\draw (217.5,66) node [anchor=west] [inner sep=0.75pt]    {\small $\mathfrak{s}_{1/2\pm \ \epsilon } \ $};
\draw (175,65.82) node [anchor=east] [inner sep=0.75pt]    {\small cov$(\mathcal{M})$};
\draw (190,33) node [anchor=west] [inner sep=0.75pt]  [font=\small,color={rgb, 255:red, 0; green, 0; blue, 0 }  ,opacity=1 ]  { Mathias model};
\draw (494,74) node [anchor=west] [inner sep=0.75pt]  [font=\small,color={rgb, 255:red, 0; green, 0; blue, 0 }  ,opacity=1 ]  {Laver model};
\draw (494,163) node [anchor=west] [inner sep=0.75pt]  [font=\small,color={rgb, 255:red, 0; green, 0; blue, 0 }  ,opacity=1 ]  {Cohen Model};
\draw (520,100) node [anchor=west] [inner sep=0.75pt]  [font=\small,color={rgb, 255:red, 0; green, 0; blue, 0 }  ,opacity=1 ]  {Random model};

\end{tikzpicture}
    \caption{\small Inequalities between $\mathfrak{s}_{1/2}, \mathfrak{s}_{1/2 \pm \epsilon}, \mathfrak{s}_{1/2}^{w}, \mathfrak{s}_{1/2}^{\infty}$ and other well-known cardinal characteristics. $\dashrightarrow$ means $\leq$, and $\rightarrow$ means $\leq$, consistently $<$.}
    \label{Inequalities between new variants of s}
\end{figure}

The cardinals defined in \ref{variants of s} also have their dual version. Recall that the \textit{reaping number} $\mathfrak{r}$ is the dual of $\mathfrak{s}$, and it is the least size of a reaping family $\mathcal{R} \subseteq [\omega]^\omega$, that is, no single infinite set splits all members of $\mathcal{R}$. 

\begin{defn}\label{variants of r}
 We say a family $\mathcal{R}$ of infinite sets is
\[
        \left\{ \begin{array}{c}
           \text{$1/2$-reaping}   \\
           \text{$\epsilon$-almost $1/2$-reaping}   \\
           \text{weakly $1/2$-reaping}   \\
           \text{cofinally $1/2$-reaping}    
        \end{array} \right.
    \]
    if there is no $X \in [\omega]^\omega$ such that for all $R \in \mathcal{R}$  
    \[
        \left\{ \begin{array}{c}
           \text{$X$ bisects $R$}   \\
           \text{$X$ $\epsilon$-almost bisects $R$}   \\
           \text{$X$ weakly bisects $R$}   \\
           \text{$X$ cofinally bisects $R$}    
        \end{array} \right.
    \]
    and denote the least cardinality of such a family by  $\mathfrak{r}_{1/2}, \mathfrak{r}_{1/2 \pm \epsilon}, \mathfrak{r}_{1/2}^{w}, \mathfrak{r}_{1/2}^{\infty}$, respectively.    
\end{defn}

Figure \ref{dual diagram} shows the inequalities already proved between these new variants of $\mathfrak{r}$ and other well-known cardinal characteristics. The proofs of these results are also in \cite{Halway2018}.

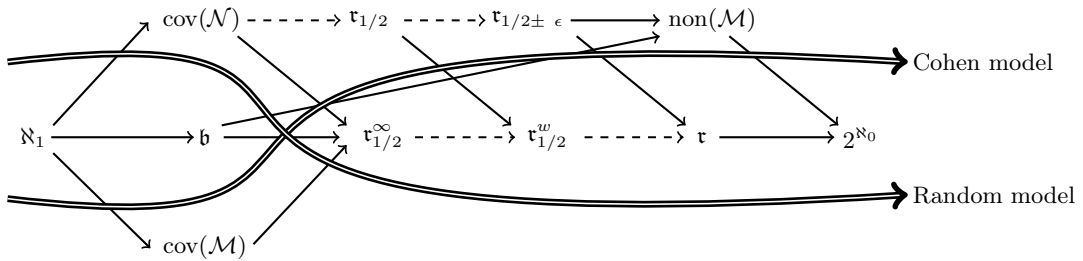
\begin{figure}[H]
    \centering
    
\tikzset{every picture/.style={line width=0.75pt}} 

\begin{tikzpicture}[x=0.75pt,y=0.75pt,yscale=-1,xscale=1]

\draw  [thick,->]   (83,541) -- (155,541) ;
\draw  [dashed,->]  (352,541) -- (402,541) ;
\draw  [thick,->]  (185,595) -- (232,545) ;
\draw [thick,->]  (420,541) -- (478,541) ;
\draw  [thick,->]  (170,541) -- (228.5,541) ;
\draw [dashed,->]   (266.5,541) -- (317,541) ;
\draw  [dashed,->] (261,482) -- (302,482) ;
\draw  [thick,->]  (345,482) -- (390,482) ;
\draw  [dashed,->] (182,482) -- (229,482) ;
\draw  [thick,->]  (84,549) -- (133,597) ;
\draw  [thick,->]  (84,535) -- (132,483) ;
\draw  [thick,->]  (425,490) -- (480,535) ;
\draw  [thick,->]  (348,490) -- (403,535) ;
\draw  [thick,->]  (177,490) -- (232,535) ;
\draw  [thick,->]  (260,490) -- (315,535) ;
\draw  [thick,->]  (169,535) -- (390,490) ;

\draw [line width=1][double,->]   (61,572) .. controls (300,600) and (50,475) .. (515,503) ;

\draw [line width=1][double,->]   (61,503) .. controls (300,475) and (40,600) .. (515,570) ;

\draw (82,541) node [anchor=east][inner sep=0.75pt]    {\small $\aleph _{1}$};
\draw (165,541) node [anchor=east][inner sep=0.75pt]    {\small $\mathfrak{b} $};
\draw (345,541) node [anchor=east] [inner sep=0.75pt]    {\small $\mathfrak{r}_{1/2}^{w}$};
\draw (262,541) node [anchor=east] [inner sep=0.75pt]    {\small $\mathfrak{r}_{1/2}^{\infty }$};
\draw (415,541) node [anchor=east][inner sep=0.75pt]    {\small $\mathfrak{r} $};
\draw (502,541) node [anchor=east][inner sep=0.75pt]    {\small $2^{\aleph_{0}}$};
\draw (138,589) node [anchor=north west][inner sep=0.75pt]    {\small cov$(\mathcal{M})$};
\draw (180,482) node [anchor=east] [inner sep=0.75pt]    {\small cov$(\mathcal{N})$};
\draw (255,482) node [anchor=east] [inner sep=0.75pt]    {\small $\mathfrak{r}_{1/2}$};
\draw (347,482) node [anchor=east] [inner sep=0.75pt]    {\small $\mathfrak{r}_{1/2\pm \ \epsilon } \ $};
\draw (440,482) node [anchor=east] [inner sep=0.75pt]    {\small non$(\mathcal{M})$};
\draw (516,503) node [anchor=west] [inner sep=0.75pt]  [font=\small,color={rgb, 255:red, 0; green, 0; blue, 0 }  ,opacity=1 ]  {Cohen model};
\draw (516,570) node [anchor=west] [inner sep=0.75pt]  [font=\small,color={rgb, 255:red, 0; green, 0; blue, 0 }  ,opacity=1 ]  {Random model};

\end{tikzpicture}
    \caption{\small Dual diagram. $\dashrightarrow$ means $\leq$, and $\rightarrow$ means $\leq$, consistently $<$. }
    \label{dual diagram}
\end{figure}

It is consistently true that $\mathfrak{s}_{1/2}< \mathfrak{d}$ since this holds in any model of Con$($non$(\mathcal{N})< \mathfrak{d})$, for example the Laver model. However, the problem about Con($\mathfrak{d}<\mathfrak{s}_{1/2}$) remained open until it was solved recently in \cite{farkas2023halfway}. They also proved the dual problem  Con$(\mathfrak{r}_{1/2} < \mathfrak{b})$. However, different methods were used to prove each result. Independently, we demonstrated that the same inequalities hold using a different and shorter argument. This is developed in section \ref{sigma centered forcing}. We use just one method to prove Con($\mathfrak{d}<\mathfrak{s}_{1/2}$),  Con$(\mathfrak{r}_{1/2} < \mathfrak{b})$, and even a stronger result which is Con($\mathfrak{i}<\mathfrak{s}_{1/2}$). Recall that $\mathfrak{i}$ stands for the independence number (see below), and it is known that  $\mathfrak{d}\leq \mathfrak{i}$.

We also investigate relations between these new cardinal characteristics and the cardinal characteristics associated with the ideal $\mathcal{E}$, which is the $\sigma$-ideal generated by closed sets of measure zero. In section \ref{Hechler forcing} we show that $\mathfrak{s}_{1/2}^{\infty} \leq $ non$(\mathcal{E})$ and cov$(\mathcal{E}) \leq \mathfrak{r}_{1/2}^{\infty}$. Besides, we also prove that the strict inequalities are consistent using the Hechler and Dual Hechler model. By the results in sections \ref{sigma centered forcing} and \ref{Hechler forcing}, we know the behavior of these cardinals in these models (see figure \ref{Behvaior in the Hechler model}).

\begin{figure}[h!]
    \centering

\tikzset{every picture/.style={line width=0.75pt}} 

\begin{tikzpicture}[x=0.75pt,y=0.75pt,yscale=-1,xscale=1]

\draw [thick,->]   (80,131) -- (149,131) ;
\draw [thick,->]   (346,131) -- (396,131) ;
\draw [thick,->]   (340,144) -- (375,180) ;
\draw [thick,->]  (420,131) -- (468,131) ;
\draw [thick,->]   (440,74) -- (475,117) ;
\draw [thick,->]   (439,178) -- (475,143) ;
\draw [thick,->]   (178,72) -- (398,122) ;
\draw [dashed,->] (175,131) -- (219,131) ;
\draw  [dashed,->]  (261,131) -- (305,131) ;
\draw  [dashed,->]  (262,67) -- (306,67) ;
\draw  [dashed,->]  (341,67) -- (384,67) ;
\draw [thick,->]   (179,67) -- (216,67) ;
\draw  [thick,->]  (170,125) -- (224,75) ;
\draw  [thick,->]  (73,125) -- (127,75) ;
\draw [thick,->]   (255,125) -- (306,75) ;
\draw  [thick,->]  (333,125) -- (384,75) ;

\draw [line width=1][double,->]    (200,42) .. controls (180,150) and (250,80).. (404,100)  .. controls (450,110) and (420,170).. (492,163);

\draw [line width=1][double,->]    (60,115) .. controls (320,110) .. (350,125) .. controls (400,160) and (360,170) .. (350,170);

\draw (60,123) node [anchor=north west][inner sep=0.75pt]    {\small $\aleph _{1}$};
\draw (158,125) node [anchor=north west][inner sep=0.75pt]    {\small $\mathfrak{s} $};
\draw (225,124) node [anchor=north west][inner sep=0.75pt]    {\small $\mathfrak{s}_{1/2}^{w}$};
\draw (310,124) node [anchor=north west][inner sep=0.75pt]    {\small $\mathfrak{s}_{1/2}^{\infty }$};
\draw (404,125) node [anchor=north west][inner sep=0.75pt]    {\small $\mathfrak{d}$};
\draw (473.13,123) node [anchor=north west][inner sep=0.75pt]    {\small $2^{\aleph _{0}}$};
\draw (383,174.82) node [anchor=north west][inner sep=0.75pt]    {\small non$(\mathcal{M})$};
\draw (387,66.82) node [anchor=west] [inner sep=0.75pt]    {\small non$(\mathcal{N})$};
\draw (309,66.82) node [anchor=west] [inner sep=0.75pt]    {\small $\mathfrak{s}_{1/2}$};
\draw (217.5,66) node [anchor=west] [inner sep=0.75pt]    {\small $\mathfrak{s}_{1/2\pm \ \epsilon } \ $};
\draw (175,65.82) node [anchor=east] [inner sep=0.75pt]    {\small cov$(\mathcal{M})$};
\draw (494,163) node [anchor=west] [inner sep=0.75pt]  [font=\small,color={rgb, 255:red, 0; green, 0; blue, 0 }  ,opacity=1 ]  {Dual Hechler model};
\draw (270,170) node [anchor=west] [inner sep=0.75pt]  [font=\small,color={rgb, 255:red, 0; green, 0; blue, 0 }  ,opacity=1 ]  {Hechler model};

\end{tikzpicture}

\hspace{2mm}

\tikzset{every picture/.style={line width=0.75pt}} 

\begin{tikzpicture}[x=0.75pt,y=0.75pt,yscale=-1,xscale=1]

\draw  [thick,->]   (83,541) -- (155,541) ;
\draw  [dashed,->]  (352,541) -- (402,541) ;
\draw  [thick,->]  (185,595) -- (232,545) ;
\draw [thick,->]  (420,541) -- (478,541) ;
\draw  [thick,->]  (170,541) -- (228.5,541) ;
\draw [dashed,->]   (266.5,541) -- (317,541) ;
\draw  [dashed,->] (261,482) -- (302,482) ;
\draw  [thick,->]  (345,482) -- (390,482) ;
\draw  [dashed,->] (182,482) -- (229,482) ;
\draw  [thick,->]  (84,549) -- (133,597) ;
\draw  [thick,->]  (84,535) -- (132,483) ;
\draw  [thick,->]  (425,490) -- (480,535) ;
\draw  [thick,->]  (348,490) -- (403,535) ;
\draw  [thick,->]  (177,490) -- (232,535) ;
\draw  [thick,->]  (260,490) -- (315,535) ;
\draw  [thick,->]  (169,535) -- (390,490) ;

\draw [line width=1][double,->]   (215,600) .. controls (200,550) and (100,485) .. (515,510) ;

\draw [line width=1][double,->]   (75,580) .. controls (150,560) and (100,485) .. (350,495) .. controls (375,495) .. (370,461) ;

\draw (82,541) node [anchor=east][inner sep=0.75pt]    {\small $\aleph _{1}$};
\draw (165,541) node [anchor=east][inner sep=0.75pt]    {\small $\mathfrak{b} $};
\draw (345,541) node [anchor=east] [inner sep=0.75pt]    {\small $\mathfrak{r}_{1/2}^{w}$};
\draw (262,541) node [anchor=east] [inner sep=0.75pt]    {\small $\mathfrak{r}_{1/2}^{\infty }$};
\draw (415,541) node [anchor=east][inner sep=0.75pt]    {\small $\mathfrak{r} $};
\draw (502,541) node [anchor=east][inner sep=0.75pt]    {\small $2^{\aleph_{0}}$};
\draw (138,589) node [anchor=north west][inner sep=0.75pt]    {\small cov$(\mathcal{M})$};
\draw (180,482) node [anchor=east] [inner sep=0.75pt]    {\small cov$(\mathcal{N})$};
\draw (255,482) node [anchor=east] [inner sep=0.75pt]    {\small $\mathfrak{r}_{1/2}$};
\draw (347,482) node [anchor=east] [inner sep=0.75pt]    {\small $\mathfrak{r}_{1/2\pm \ \epsilon } \ $};
\draw (440,482) node [anchor=east] [inner sep=0.75pt]    {\small non$(\mathcal{M})$};
\draw (516,510) node [anchor=west] [inner sep=0.75pt]  [font=\small,color={rgb, 255:red, 0; green, 0; blue, 0 }  ,opacity=1 ]  {Dual Hechler model};
\draw (330,455) node [anchor=west] [inner sep=0.75pt]  [font=\small,color={rgb, 255:red, 0; green, 0; blue, 0 }  ,opacity=1 ]  {Hechler model};

\end{tikzpicture}

    \caption{\small Behavior in the Hechler and Dual Hechler model}
    \label{Behvaior in the Hechler model}
\end{figure}
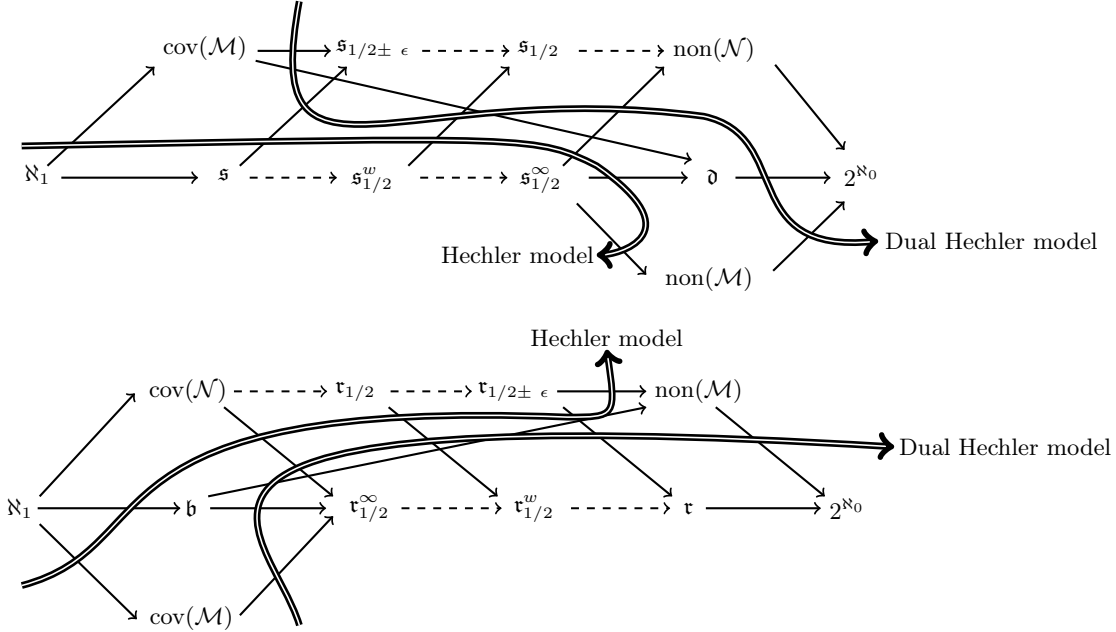

In \cite{Halway2018} was also introduced the following variant of the \textit{independence number} $\mathfrak{i}$, the least cardinality of a maximal independent family. Recall that a family $\mathcal{I}$ of subsets of $\omega$ is called \textit{independent} if for any disjoint finite subfamilies $\mathcal{A}, \mathcal{B} \subseteq \mathcal{I}$, the set
\[
\bigcap_{A \in \mathcal{A}} A \cap \bigcap_{B\in\mathcal{B}} (\omega \setminus B)
\]
is infinite.
\begin{defn}
A set $X\in [\omega]^\omega$ is \textit{moderate} if $\underline{d}(X):=\liminf_{n\to \infty} d_n(X)>0$ as well as $\bar{d}(X):=\limsup_{n\to \infty}d_n(X)<1$.
\end{defn}
\begin{defn}
 We say that a family $\mathcal{I} \subseteq [\omega]^\omega$ is \textit{$*$-independent} if for any set $X \in \mathcal{I}$ we have that $X$ is moderate and for any finite subfamily $\mathcal{F}\subseteq \mathcal{I}$, the following holds
\[
\lim_{n \to \infty} \left(  \frac{d_n(\bigcap_{E\in \mathcal{F}} E )}{\prod_{E\in \mathcal{F}} d_n(E) }   \right)  = 1
\]
We denote the least cardinality of a maximal $*$-independent family by $\mathfrak{i}_{*}$
\end{defn}

In section \ref{independent families}, we show that Con($\mathfrak{i}_{*}<2^{\aleph_0}$) holds in the Cohen model, assuming continuum hypothesis (CH) in the ground model. This solves question six raised in \cite{Halway2018}.

\hspace{2mm}

\textit{Acknowledgments.}  I am grateful to Jörg Brendle for guiding me during this research. His ideas and suggestions contributed to the development of this paper.

\section{$\sigma$-centered forcing}\label{sigma centered forcing}

Let us review the notion of $\sigma$-centered forcing.

\begin{defn}
Let $\mathbb{P}$ be a forcing notion. A subset $P\subseteq \mathbb{P}$ is called \textit{centered} if for all finite $F \subseteq P$ there is $q\in \mathbb{P}$ such that $q\leq p$ for all $p \in F$. $\mathbb{P}$ is \textit{$\sigma$-centered} if there are centered sets $P_n$ such that $\mathbb{P}= \bigcup_{n\in \omega}P_n$.
\end{defn}

Fix $0<\epsilon<1/2$. We prove that $\mathfrak{s}_{1/2\pm \epsilon}$ cannot decrease under iterations of $\sigma$-centered forcing. First, we deal with the single step iteration. 

\begin{lemma}\label{main lemma epsilon families}
Let $\mathbb{P}$ be a $\sigma$-centered forcing notion. Let $\dot{x}$ be a $\mathbb{P}$-name for an element of $[\omega]^\omega$. There are sets $\{X_i\}_{i\in \omega}$ such that for all $A \in [\omega]^\omega$, if for all $i\in \omega$ there are infinitely many $n \in \omega$ such that
\[
\frac{d_n(X_i \cap A)}{d_n(A)} \not\in (1/2 - \epsilon, \epsilon + 1/2)
\]
Then
\[
\Vdash \exists^\infty n \in \omega  \left(\frac{d_n(\dot{x} \cap A)}{d_n(A)} \not\in (1/2 - \epsilon, \epsilon + 1/2) \right) 
\]
\end{lemma}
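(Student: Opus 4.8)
The plan is to code $\dot{x}$, below each condition, by a tree of finite approximations, and to exploit $\sigma$-centeredness together with compactness of $2^{\omega}$ to extract one ground-model branch from each centered piece; these branches will be the $X_{i}$. Concretely, I would fix a decomposition $\mathbb{P}=\bigcup_{m\in\omega}P_{m}$ with each $P_{m}$ centered, and for $q\in\mathbb{P}$, $n\in\omega$ set $W^{q}_{n}=\{\,s\subseteq n:\exists r\le q\ (r\Vdash \dot{x}\cap n=s)\,\}$ and $T^{q}=\bigcup_{n}W^{q}_{n}\subseteq 2^{<\omega}$. Since $\dot{x}\cap n$ is decided by some extension of every condition, each $W^{q}_{n}$ is nonempty, and a short check shows $T^{q}$ is a pruned, finitely branching tree; it has no finite branch, for such a branch would yield a condition below $q$ forcing $\dot{x}$ to be finite, against $\Vdash\dot{x}\in[\omega]^{\omega}$. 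Hence $[T^{q}]$ is a nonempty closed subset of $2^{\omega}$ all of whose members are infinite subsets of $\omega$, and $q'\le q$ implies $T^{q'}\subseteq T^{q}$, so $[T^{q'}]\subseteq[T^{q}]$. Now fix $m$: any finitely many $q_{1},\dots,q_{t}\in P_{m}$ have a common lower bound $r$ by centeredness, whence $[T^{q_{1}}]\cap\dots\cap[T^{q_{t}}]\supseteq[T^{r}]\ne\emptyset$; so $\{[T^{q}]:q\in P_{m}\}$ is a family of closed subsets of the compact space $2^{\omega}$ with the finite intersection property, and I may choose $X_{m}\in\bigcap_{q\in P_{m}}[T^{q}]$. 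The family $\{X_{i}\}_{i\in\omega}$ is taken to be the list $\{X_{m}:m\in\omega\}$.

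The heart of the argument is the forcing-theoretic translation: for any $q$, any $N$, and any $A\in[\omega]^{\omega}$ (taken in the ground model), $q$ forces ``$d_{n}(\dot{x}\cap A)/d_{n}(A)\in(\tfrac12-\epsilon,\tfrac12+\epsilon)$ for all $n\ge N$'' if and only if every $s\in W^{q}_{n}$ satisfies $|s\cap A|/|A\cap n|\in(\tfrac12-\epsilon,\tfrac12+\epsilon)$ for all $n\ge N$, equivalently every branch of $T^{q}$ eventually $\epsilon$-almost bisects $A$. One direction holds because each $s\in W^{q}_{n}$ is realized by some $r\le q$, the other because in any generic through $q$ the set $\dot{x}\cap n$ lies in $W^{q}_{n}$. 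Granting this I argue by contraposition. If the conclusion of the lemma fails, some $p$ forces its negation, and refining $p$ yields $q\le p$ and $N$ with $q\Vdash\forall n\ge N\ \big(d_{n}(\dot{x}\cap A)/d_{n}(A)\in(\tfrac12-\epsilon,\tfrac12+\epsilon)\big)$. By the translation every branch of $T^{q}$ eventually $\epsilon$-almost bisects $A$; picking $m$ with $q\in P_{m}$, the set $X_{m}\in[T^{q}]$ does so too, so $d_{n}(X_{m}\cap A)/d_{n}(A)\in(\tfrac12-\epsilon,\tfrac12+\epsilon)$ for all $n\ge N$, i.e.\ only finitely many $n$ fall outside. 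Thus the hypothesis of the lemma fails at $i=m$, which is precisely the contrapositive.

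The step I expect to carry the real weight is exactly this use of $\sigma$-centeredness through compactness: the possibility trees $T^{q}$ for $q$ in one centered piece nest coherently enough, via common lower bounds, that their branch sets have the finite intersection property, and then compactness of $2^{\omega}$ hands us a single ground-model real dominating the behaviour of $\dot{x}$ below every condition of that piece. The remaining points — that $T^{q}$ is pruned with no finite branch, that the forced universal statement ``$\forall n\ge N$'' is read off the branches of $T^{q}$, and that extracting a uniform threshold $N$ below one condition is harmless — are routine; in particular no genericity of $A$ is needed, since $A$ may be taken in the ground model throughout.
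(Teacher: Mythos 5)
Your proof is correct and follows essentially the same approach as the paper: you build, for each centered piece $P_m$, a ground-model set $X_m$ whose every initial segment $X_m\cap n$ is a possible value of $\dot{x}\cap n$ below every $q\in P_m$, and you use this to deduce the desired forcing statement. The paper obtains $X_i$ by first noting (via centeredness) that for each $n$ some $y\subseteq n$ is not excluded by any $p\in\mathcal{P}_i$ and then invoking K\H{o}nig's lemma, while you phrase the same construction via compactness of $2^\omega$ applied to the closed sets $[T^q]$ — two formulations of the same argument; and where the paper closes by directly exhibiting a dense set of conditions forcing a bad ratio, you argue the contrapositive, which is a minor reformulation of the identical idea.
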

\begin{proof}
Let $\mathbb{P}= \bigcup_{i\in \omega} \mathcal{P}_i$ where the $\mathcal{P}_i$ are centered. Fix $i \in \omega$. For each $n \in \omega$ there is $y \in \mathcal{P}(\{0, ..., n-1\})$ such that no $p\in \mathcal{P}_i$ forces
\[
\dot{x} \cap n \neq y
\]
By König's lemma, there is $X_i \subset \omega$ such that for all $n \in \omega$ no $p\in \mathcal{P}_i$ forces
\begin{equation}\label{aproximacioneHechler}
 \dot{x} \cap n \neq X_i \cap n   
\end{equation}
Let $A$ be an infinite subset of $\omega$, and suppose that for all $i\in \omega$ there are infinitely many $n \in \omega$ such that
\[
\frac{d_n(X_i \cap A)}{d_n(A)} \not\in (1/2 - \epsilon, \epsilon + 1/2)
\]
Fix $m \in \omega$ and a condition $p \in \mathbb{P}$. There is $\mathcal{P}_i$ such that $p \in \mathcal{P}_i$. Also, there is $n>m$ such that
\[
\frac{d_n(X_i \cap A)}{d_n(A)} \not\in (1/2 - \epsilon, \epsilon + 1/2)
\]
By \ref{aproximacioneHechler} there is $q\leq p$ such that
\[
q \Vdash \dot{x} \cap n = X_i \cap n
\]
Therefore
\[
q \Vdash  \frac{d_n(\dot{x} \cap A)}{d_n(A)} = \frac{d_n(X_i \cap A)}{d_n(A)} \not\in (1/2 - \epsilon, \epsilon + 1/2)
\]
By the previous paragraph the set $D_m=\{ q\in \mathbb{P} \, | \, $ There is $n>m$ such that $ q \Vdash \frac{d_n(\dot{x} \cap A)}{d_n(A)} \not\in (1/2 - \epsilon, \epsilon + 1/2) \}$ is dense. Then 
\[
\Vdash \exists^\infty n \in \omega  \left(\frac{d_n(\dot{x} \cap A)}{d_n(A)} \not\in (1/2 - \epsilon, \epsilon + 1/2) \right) 
\]
\end{proof}

Let us say a partial order has property $(\star)$ if it shares the property exhibited in \ref{main lemma epsilon families}. Note that any forcing notion that has this property does not decrease $\mathfrak{s}_{1/2\pm \epsilon}$. We use a classical argument to prove that $(\star)$ is preserved under finite support iteration (fsi) of ccc forcing notions.   

\begin{lemma}\label{iteration lemma of epsilon families}
Let $\delta$ be a limit ordinal. Let $(\mathbb{P}_\alpha, \dot{\mathbb{Q}}_{\alpha} : \alpha < \delta)$ be an fsi of ccc forcing notions. Assume that $\mathbb{P}_\alpha$ has property $(\star)$ for all $\alpha<\delta$. Then also $\mathbb{P}_\delta$ has property $(\star)$.
\end{lemma}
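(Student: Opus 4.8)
This is the standard "property is preserved under finite support iteration at limit stages" argument, adapted to property $(\star)$. I would proceed by reflecting a $\mathbb{P}_\delta$-name for an infinite subset of $\omega$ down to some initial segment of the iteration, using the ccc-ness together with the fact that $\delta$ is a limit ordinal. First I would let $\dot{x}$ be a $\mathbb{P}_\delta$-name for an element of $[\omega]^\omega$. For each $n \in \omega$, by ccc there is a countable set of conditions deciding $\dot{x}\cap n$; since the iteration has finite supports, each such condition has finite support, so the union of all these supports over all $n$ is a countable subset of $\delta$. As $\delta$ is a limit ordinal of uncountable cofinality — wait, $\delta$ need not have uncountable cofinality, so I must be more careful here; the correct move is to handle the case $\mathrm{cf}(\delta)=\omega$ separately, or better, to observe that it suffices to find, for each such name, \emph{some} $\alpha<\delta$ into which the relevant information reflects.

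**Splitting into cases on the cofinality.** If $\mathrm{cf}(\delta)>\omega$, then by the ccc and finite supports, there is $\alpha<\delta$ such that $\dot{x}$ is (equivalent to) a $\mathbb{P}_\alpha$-name; applying property $(\star)$ for $\mathbb{P}_\alpha$ gives sets $\{X_i\}_{i\in\omega}$ with the required absoluteness, and since $\mathbb{P}_\delta$ is a further ccc (hence $[\omega]^\omega$-cofinal-preserving in the relevant sense) extension of $\mathbb{P}_\alpha$ — more precisely, since $\Vdash_{\mathbb{P}_\alpha}$ of the $(\star)$-conclusion for $\dot x$ already gives $\exists^\infty n$ in the $\mathbb{P}_\alpha$-extension, and this statement is $\Pi^0_2$ about the real $\dot x$ and the fixed real $A$ from the ground model, it is upward absolute to the $\mathbb{P}_\delta$-extension. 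The key point is that the hypothesis on $A$ (that each $X_i$ fails $\epsilon$-almost-bisection on $A$ infinitely often) and the conclusion ("$\exists^\infty n$") are both arithmetic in the reals $X_i, A, x$, so they transfer. If $\mathrm{cf}(\delta)=\omega$, fix an increasing cofinal sequence $\langle \delta_k : k\in\omega\rangle$; then I would use the standard trick: build a sequence of $\mathbb{P}_{\delta_k}$-names approximating $\dot x$, or alternatively appeal to the fact that $(\mathbb{P}_{\delta_k}, \dot{\mathbb{Q}}_{\delta_k+j})$ itself telescopes — but the cleanest route is probably to note that any $\mathbb{P}_\delta$-name for an infinite subset of $\omega$ is decided by a countable antichain, reflecting into some $\mathbb{P}_{\delta_k}$, which has $(\star)$ by hypothesis, and then argue absoluteness as before.

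**Carrying out the absoluteness step.** Given the reflection $\alpha < \delta$ with $\dot x$ a $\mathbb{P}_\alpha$-name, I apply property $(\star)$ for $\mathbb{P}_\alpha$ to obtain $\{X_i\}_{i\in\omega}$. Now let $A\in[\omega]^\omega$ (in the ground model) be such that for every $i$ there are infinitely many $n$ with $d_n(X_i\cap A)/d_n(A)\notin(1/2-\epsilon,1/2+\epsilon)$. By $(\star)$ for $\mathbb{P}_\alpha$, $\Vdash_{\mathbb{P}_\alpha} \exists^\infty n\, (d_n(\dot x\cap A)/d_n(A)\notin(1/2-\epsilon,1/2+\epsilon))$. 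Working in $V^{\mathbb{P}_\delta}$, which contains $V^{\mathbb{P}_\alpha}$ as an inner model, the real $x$ (the realization of $\dot x$) and the real $A$ are the same; since "$\exists^\infty n\,\varphi(n,x,A)$" with $\varphi$ recursive is a $\Pi^0_2$ statement about $x\oplus A$, and $\Pi^0_2$ statements are absolute between transitive models of $\mathrm{ZF}$ containing the relevant reals, the conclusion holds in $V^{\mathbb{P}_\delta}$. Hence $\Vdash_{\mathbb{P}_\delta} \exists^\infty n\,(d_n(\dot x\cap A)/d_n(A)\notin(1/2-\epsilon,1/2+\epsilon))$, which is exactly property $(\star)$ for $\mathbb{P}_\delta$.

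**The main obstacle.** The delicate point is the reflection step when $\mathrm{cf}(\delta)=\omega$: unlike the uncountable-cofinality case, one cannot simply say "$\dot x$ is a $\mathbb{P}_\alpha$-name for a single $\alpha$." I expect the resolution is the routine observation that it is enough to reflect, for a \emph{fixed} name $\dot x$, the countably much information needed (the antichains deciding $\dot x\cap n$ for each $n$) into a single $\mathbb{P}_{\delta_k}$ — a countable union of countable sets of finitely-supported conditions has support bounded below $\delta$ when... no, a countable union of finite subsets of $\delta$ need not be bounded when $\mathrm{cf}(\delta)=\omega$. So in that case one genuinely argues differently: one shows directly that $\mathbb{P}_\delta$ is the direct limit and uses that any nice name for a real appears at some $\mathbb{P}_{\delta_k}$ \emph{after} possibly re-indexing, i.e. one uses that $[\omega]^\omega\cap V^{\mathbb{P}_\delta}=\bigcup_k [\omega]^\omega\cap V^{\mathbb{P}_{\delta_k}}$, which does hold for finite support iterations of ccc forcing, and then picks $k$ with $x\in V^{\mathbb{P}_{\delta_k}}$ and applies $(\star)$ for $\mathbb{P}_{\delta_k}$. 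This is the step I would write out most carefully; everything else is bookkeeping plus the $\Pi^0_2$-absoluteness remark.
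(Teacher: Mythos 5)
Your reflection argument for $\mathrm{cf}(\delta)>\omega$ is fine and matches the paper's observation that no new reals appear at such stages. But the case $\mathrm{cf}(\delta)=\omega$ — which is the only interesting case, and the one the paper spends its entire proof on — rests on a false claim. You write that $[\omega]^\omega\cap V^{\mathbb{P}_\delta}=\bigcup_k[\omega]^\omega\cap V^{\mathbb{P}_{\delta_k}}$ ``does hold for finite support iterations of ccc forcing.'' It does not: finite support iterations add new reals at limit stages of countable cofinality. For example, in an fsi of Cohen forcing of length $\omega$, the real $x$ with $n\in x$ iff the $n$-th Cohen real has a $1$ in its first coordinate lies in $V^{\mathbb{P}_\omega}$ but in no $V^{\mathbb{P}_m}$ for $m<\omega$. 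You already noticed the warning sign yourself (``a countable union of finite subsets of $\delta$ need not be bounded when $\mathrm{cf}(\delta)=\omega$'') but then suppressed it with a nonexistent fact, so the argument never actually reaches a correct resolution.

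What the paper does instead is the classical fuse-sequence (or ``reading the name'') trick. After reducing to $\delta=\omega$: for each $m$, work in $V^{\mathbb{P}_m}$ and build a decreasing sequence $p_{m,k}$ in the remainder forcing $\mathbb{P}_\omega\setminus\mathbb{P}_m$ deciding the $k$-th element of $\dot x$; this produces a $\mathbb{P}_m$-name $\dot x_m$ for a real $X_m$ which need \emph{not} equal $\dot x$, but which agrees with $\dot x$ on an initial segment below $p_{m,k}$. Applying $(\star)$ for $\mathbb{P}_m$ to each $\dot x_m$ yields the double-indexed family $\{X_{m,i}\}$, and then a density argument — given $p\in\mathbb{P}_\omega$, choose $m$ with $p\in\mathbb{P}_m$, find a suitable $j$ where $X_m$ and $A$ witness the bad density, and then amalgamate $p$ with $p_{m,j}$ — closes the proof. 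The missing idea in your write-up is precisely this: one does \emph{not} reflect $\dot x$ to a single earlier name, but rather constructs, for each finite stage, an auxiliary name that approximates $\dot x$ well enough along a fuse, and quantifies over all such approximations in the hypothesis on $A$.
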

\begin{proof}
No new reals arise in limit stages of uncountable cofinality so that the lemma vacuously holds if cf$(\delta)>\omega$. Hence assume cf$(\delta)=\omega$. To simplify notation suppose $\delta=\omega$.

Let $\dot{x}$ be a $\mathbb{P}_\omega$-name for an element of $[\omega]^\omega$. Fix $m \in \omega$. Work in $V^{\mathbb{P}_m}$ for the moment. There is a decreasing sequence of conditions $p_k=p_{m,k}$ in the remainder forcing $\mathbb{P}_\omega \setminus \mathbb{P}_m$ such that $p_k$ decides the $k$-th element of $\dot{x}$. Say
\[
p_k \Vdash \text{``$l_k$ is the $k$-th element of $\dot{x}$''}
\]
Let $X_m=\{l_k \,| \, k\in \omega \}$. Work in the ground model $V$. We have a $\mathbb{P}_m$-name $\dot{x}_m$ for $X_m$. Since $\mathbb{P}_m$ has property $(\star)$, we can find $X_{m,i}$, $i\in \omega$ which satisfy what property $(\star)$ said for $\dot{x}_m$. Now, fix $A\in [\omega]^\omega$. We claim that 
if for all $i,m\in \omega$ there are infinitely many $n \in \omega$ such that
\[
\frac{d_n(X_{m,i} \cap A)}{d_n(A)} \not\in (1/2 - \epsilon, \epsilon + 1/2)
\]
then
\[
\Vdash \exists^\infty n \in \omega  \left(\frac{d_n(\dot{x} \cap A)}{d_n(A)} \not\in (1/2 - \epsilon, \epsilon + 1/2) \right) 
\]
To see this, let $p \in \mathbb{P}_\omega$ and $n\in \omega$. Fix $m$ such that $p \in \mathbb{P}_m$ and work in $V^{\mathbb{P}_m}$. We know that there is $j > n$ such that
\[
p_{m,j} \Vdash  \frac{d_{j}(\dot{x} \cap A)}{d_{j}(A)} = \frac{d_{j}(X_m \cap A)}{d_{j}(A)} \not\in (1/2 - \epsilon, \epsilon + 1/2) 
\]
Now work in the ground model $V$. By strengthening $p\in \mathbb{P}_n$, if necessary, we may assume $p$ decides $p_{m,j}$. Then 
\[
q= p^\frown p_{m,j} \Vdash \frac{d_{j}(\dot{x} \cap A)}{d_{j}(A)} \not\in (1/2 - \epsilon, \epsilon + 1/2) 
\]

\end{proof}

Let $\mathbb{P}$ be a forcing notion, and let $\dot{x}$ be a $\mathbb{P}$-name for an element of $[\omega]^\omega$. We say that $\dot{x}$ is a \textit{$\epsilon$-bisecting real} if for all $y\in[\omega]^\omega$ we have that $\Vdash$ ``$\dot{x}$ $\epsilon$-almost bisects  $y$''. Lemma \ref{main lemma epsilon families} and lemma \ref{iteration lemma of epsilon families} immediately imply   

\begin{cor}\label{epsilonnotaddit}
Iterations of $\sigma$-centered forcing do not add $\epsilon$-bisecting reals.
\end{cor}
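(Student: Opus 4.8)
The statement to prove is Corollary~\ref{epsilonnotaddit}: iterations of $\sigma$-centered forcing do not add $\epsilon$-bisecting reals. The plan is to derive this directly from the two lemmas already established, reducing ``not adding an $\epsilon$-bisecting real'' to the combinatorial property $(\star)$ together with a ground-model witness. First I would unpack the definition: a $\mathbb{P}$-name $\dot{x}$ is an $\epsilon$-bisecting real iff $\Vdash$ ``$\dot{x}$ $\epsilon$-almost bisects $y$'' for every $y \in [\omega]^\omega$ in the ground model; equivalently, $\dot{x}$ fails to be $\epsilon$-bisecting iff there is some $A \in [\omega]^\omega \cap V$ such that $\Vdash \exists^\infty n\, \big(d_n(\dot{x}\cap A)/d_n(A) \notin (1/2-\epsilon, 1/2+\epsilon)\big)$. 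So the goal is: if $(\mathbb{P}_\alpha, \dot{\mathbb{Q}}_\alpha : \alpha<\delta)$ is a finite support iteration of $\sigma$-centered (hence ccc) forcings and $\dot{x}$ is a $\mathbb{P}_\delta$-name for an element of $[\omega]^\omega$, then there is such an $A$ in $V$.

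The argument runs by induction on the length $\delta$ of the iteration, using the standard fact that a finite support iteration of $\sigma$-centered forcings of length $<\mathfrak{c}^{+}$ is again $\sigma$-centered, so that single-step and limit behaviour both fit the hypotheses. For a one-step extension $\mathbb{P}$ that is $\sigma$-centered, apply Lemma~\ref{main lemma epsilon families} to get ground-model sets $\{X_i\}_{i\in\omega}$ with the stated absoluteness property. The key point is that these sets cannot all simultaneously be $\epsilon$-almost bisected by a single $A$ on a cofinite set: more precisely, I claim there is $A\in[\omega]^\omega\cap V$ such that for every $i$ there are infinitely many $n$ with $d_n(X_i\cap A)/d_n(A)\notin(1/2-\epsilon,1/2+\epsilon)$. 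Indeed, if no such $A$ existed, then $\{X_i : i\in\omega\}$ would be a countable $\epsilon$-almost bisecting family, contradicting $\mathfrak{s}_{1/2\pm\epsilon} \geq \aleph_1$ (which holds since $\mathfrak{s}_{1/2\pm\epsilon} \geq \mathfrak{s}_{1/2} \geq \aleph_1$, or directly because no countable family can bisect the diagonal-type set built against it). Feeding this $A$ into Lemma~\ref{main lemma epsilon families} yields $\Vdash \exists^\infty n\,(d_n(\dot{x}\cap A)/d_n(A)\notin(1/2-\epsilon,1/2+\epsilon))$, so $\dot{x}$ is not $\epsilon$-bisecting. For limit stages Lemma~\ref{iteration lemma of epsilon families} shows $(\star)$ is preserved, and then the same extraction of a ground-model $A$ from the countable family of witnessing sets finishes the step; at successor stages $\mathbb{P}_{\alpha+1} = \mathbb{P}_\alpha * \dot{\mathbb{Q}}_\alpha$ is $\sigma$-centered when $\mathbb{Q}_\alpha$ is forced to be, so Lemma~\ref{main lemma epsilon families} applies directly to the whole two-step (and hence by transitivity to the full iteration, reading it as a single $\sigma$-centered forcing over $V$).

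In fact the cleanest packaging is: by Lemma~\ref{main lemma epsilon families} together with Lemma~\ref{iteration lemma of epsilon families}, every $\mathbb{P}_\delta$ in such an iteration has property $(\star)$; and any forcing $\mathbb{P}$ with property $(\star)$ adds no $\epsilon$-bisecting real, because from the countably many sets $\{X_i\}$ provided by $(\star)$ one constructs (in $V$) a single set $A$ that is $\epsilon$-almost bisected by none of them, whence $\Vdash \dot{x}$ does not $\epsilon$-almost bisect $A$. The construction of $A$ is the only real content beyond the cited lemmas: one builds $A$ as an increasing union of finite blocks, at stage $k$ appending to $A$ a block of consecutive integers, large relative to what has been placed so far, chosen so that on some initial segment $n$ past the block either $X_k$ occupies far more than half of $A\cap n$ or far less — which is possible since one is free to throw in a long run of integers all in $X_k$ or all outside $X_k$ (using that $\omega\setminus X_k$, or $X_k$ itself, is infinite, or if not, handling that degenerate case separately). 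The main obstacle is making this block construction uniformly handle all $X_k$ at once while keeping $A$ infinite and keeping the density ratios provably outside $(1/2-\epsilon,1/2+\epsilon)$ infinitely often for every single $k$; a diagonalization where block $k$ is devoted to $X_k$ and made long enough to dominate the previously accumulated mass takes care of this, and the remaining estimates are routine.
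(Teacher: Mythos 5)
Your approach is the same as the paper's: property $(\star)$ from Lemmas~\ref{main lemma epsilon families} and~\ref{iteration lemma of epsilon families} gives, for any $\mathbb{P}_\delta$-name $\dot{x}$, a countable ground-model family $\{X_i\}_{i\in\omega}$, and since no countable family is $\epsilon$-almost bisecting one finds an $A \in V\cap[\omega]^\omega$ not $\epsilon$-almost bisected by any $X_i$; then $(\star)$ forces that $\dot{x}$ does not $\epsilon$-almost bisect $A$. One small slip: you write $\mathfrak{s}_{1/2\pm\epsilon} \geq \mathfrak{s}_{1/2}$, but the inequality runs the other way ($\mathfrak{s}_{1/2\pm\epsilon} \leq \mathfrak{s}_{1/2}$, since any bisecting family is automatically $\epsilon$-almost bisecting). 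This does not affect the argument, because $\mathfrak{s}_{1/2\pm\epsilon} \geq \aleph_1$ still holds — every $\epsilon$-almost bisecting family is splitting, so $\mathfrak{s} \leq \mathfrak{s}_{1/2\pm\epsilon}$, and $\mathrm{cov}(\mathcal{M}) \leq \mathfrak{s}_{1/2\pm\epsilon}$ as well — and your alternative diagonalization sketch for building $A$ is fine. The remark about length $<\mathfrak{c}^+$ iterations staying $\sigma$-centered, while true, is unnecessary here since Lemma~\ref{iteration lemma of epsilon families} already handles arbitrary limit lengths.
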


We proceed to prove the consistency results. We use the Hechler forcing which is a $\sigma$-centered forcing. For the purposes of the next section, we use a slightly different representation
of Hechler forcing.

\begin{defn}
We define Hechler forcing $\mathbb{D}$ as the set of all pairs $(s, \phi)$ such that $s\in \omega^{<\omega}$ and $\phi: \omega^{<\omega} \rightarrow \omega$. The order is given by $(t, \psi) \leq (s, \phi)$ if $t \supseteq s$, $\psi$ dominates $\phi$ everywhere, and  $t(i) \geq \phi(t\upharpoonright i)$ for all $i \in |t| \setminus |s|$.    
\end{defn}

\begin{cor}\label{consistency of d smaller than s1/2}
Con$( \mathfrak{d}< \mathfrak{s}_{1/2\pm \epsilon} )$
\end{cor}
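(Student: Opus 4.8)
The plan is to obtain a model of $\mathfrak{d} < \mathfrak{s}_{1/2\pm\epsilon}$ by a finite support iteration of Hechler forcing $\mathbb{D}$ over a model of $\neg\mathsf{CH}$, following the standard recipe for increasing $\mathfrak{d}$ while keeping some other invariant small. Concretely, start with a ground model $V \models \mathsf{GCH}$, fix a regular cardinal $\kappa$ with $\aleph_1 < \kappa \le 2^{\aleph_0}$ in the extension (say $\kappa = \aleph_2$), and perform a finite support iteration $(\mathbb{P}_\alpha, \dot{\mathbb{Q}}_\alpha : \alpha < \omega_2)$ where each $\dot{\mathbb{Q}}_\alpha$ is (a $\mathbb{P}_\alpha$-name for) Hechler forcing $\mathbb{D}$. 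Since $\mathbb{D}$ is $\sigma$-centered and ccc, each $\mathbb{P}_\alpha$ is ccc, and the whole iteration has size $\aleph_2$, so $2^{\aleph_0} = \aleph_2$ in $V^{\mathbb{P}_{\omega_2}}$.

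First I would verify $\mathfrak{d} = \aleph_1$ in the extension — wait, that is backwards; Hechler forcing adds dominating reals, so in fact $\mathfrak{b} = \mathfrak{d} = \aleph_2$ here. So instead the model I actually want is the \emph{dual} picture, or rather I should reread what is being claimed: Corollary \ref{consistency of d smaller than s1/2} asserts Con$(\mathfrak{d} < \mathfrak{s}_{1/2\pm\epsilon})$, and since $\mathfrak{s}_{1/2\pm\epsilon} \le \mathfrak{s}_{1/2} \le \mathrm{non}(\mathcal{N})$ is not the relevant bound, the route must be: make $\mathfrak{d}$ small and $\mathfrak{s}_{1/2\pm\epsilon}$ large. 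Thus the correct iteration is not Hechler but a forcing that is $\sigma$-centered, keeps $\mathfrak{d} = \aleph_1$, yet destroys all small $\epsilon$-almost bisecting families — and here Corollary \ref{epsilonnotaddit} is the wrong direction too. The resolution, and the intended argument, is the standard one: iterate a $\sigma$-centered forcing $\dot{\mathbb{Q}}_\alpha$ of length $\omega_2$ that, at each step, \emph{generically diagonalizes} a given infinite set, i.e. adds for a prescribed name $\dot A$ an infinite set that is not $\epsilon$-almost bisected; by Corollary \ref{epsilonnotaddit} (no $\epsilon$-bisecting reals are added) combined with a suitable bookkeeping over all $\omega_2$ reals, we arrange that every family of size $\aleph_1$ in the final model fails to be $\epsilon$-almost bisecting, so $\mathfrak{s}_{1/2\pm\epsilon} = \aleph_2$, while choosing each $\dot{\mathbb{Q}}_\alpha$ so as not to add dominating reals keeps $\mathfrak{d} = \aleph_1$.

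So the key steps, in order, are: (i) design the single-step $\sigma$-centered poset $\mathbb{Q}_{\dot A}$ that, given a name $\dot A \in [\omega]^\omega$, adds an infinite $x$ with $\exists^\infty n\, \big(d_n(x\cap A)/d_n(A) \notin (1/2-\epsilon, 1/2+\epsilon)\big)$, and check it is $\sigma$-centered and $\omega^\omega$-bounding; (ii) set up the finite support iteration of length $\omega_2$ with bookkeeping that handles all $\aleph_1$-sized candidate families (using that each such family appears at some stage $<\omega_2$ by a reflection/$\Delta$-system argument, since the iterands are $\sigma$-centered hence ccc and the iteration has the $\aleph_2$-cc); (iii) invoke Lemma \ref{main lemma epsilon families} and Lemma \ref{iteration lemma of epsilon families} via Corollary \ref{epsilonnotaddit} to see that no tail of the iteration resurrects an $\epsilon$-bisecting real over an intermediate model, so the diagonalizing $x$'s added cofinally remain witnesses; (iv) conclude $\mathfrak{s}_{1/2\pm\epsilon} \ge \aleph_2$, and separately $\mathfrak{d} \le \aleph_1$ from $\omega^\omega$-boundedness plus the ground-model dominating family of size $\aleph_1$.

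The main obstacle I expect is step (i): building a $\sigma$-centered poset that adds, for a \emph{given} infinite set $A$, a real $x$ failing to $\epsilon$-almost bisect $A$, while remaining $\omega^\omega$-bounding. One natural attempt is a finite-condition forcing whose conditions are finite partial functions $x : \text{dom} \to 2$ together with a promise pinning down arbitrarily large $n$ at which the bisection ratio is pushed outside $(1/2-\epsilon,1/2+\epsilon)$; $\sigma$-centeredness is then automatic (group conditions by their finite part), but one must check genericity really forces the ratio outside the interval infinitely often — this needs a density argument exploiting that $A$ is infinite so one can always append a long block inside $A$ entirely in $x$ or entirely out of $x$ to swing $d_n(x\cap A)/d_n(A)$ past $1/2 \pm \epsilon$. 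The $\omega^\omega$-bounding check is then routine since the forcing is essentially finite-condition (or one uses a fusion/pure-decision argument if promises are infinite). Once (i) is in hand, (ii)–(iv) are the usual finite-support bookkeeping, and I would lean on the already-proved Lemmas \ref{main lemma epsilon families} and \ref{iteration lemma of epsilon families} to avoid re-deriving preservation of "no $\epsilon$-bisecting reals" along the iteration.
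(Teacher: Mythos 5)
Your proposal takes a genuinely different route from the paper, but it contains a fatal obstruction that you do not address. You propose a finite support iteration of length $\omega_2$ of $\sigma$-centered, $\omega^\omega$-bounding diagonalizing posets, hoping to keep $\mathfrak{d}=\aleph_1$ by choosing bounding iterands. This cannot work: any finite support iteration of non-trivial ccc forcings of length $\omega_2$ adds a Cohen real at every limit stage of countable cofinality, and the standard argument then gives $\mathrm{cov}(\mathcal{M})\geq\aleph_2$ in the final model (given $\aleph_1$ meager sets, their names stabilize below some stage $\alpha<\omega_2$ by ccc and regularity of $\omega_2$, and a later Cohen real misses them all). Since $\mathrm{cov}(\mathcal{M})\leq\mathfrak{d}$, you are forced to have $\mathfrak{d}=\aleph_2$, regardless of how carefully you choose the iterands. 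The preservation theorem you are implicitly relying on ("bounding iterands give a bounding iteration") holds for countable support iterations of proper forcing, not for finite support iterations. So step (i), which you flag as your main worry, is actually moot: even a perfect single-step poset would not rescue the plan.

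The paper goes in essentially the opposite direction. It first does a long Cohen iteration $\mathbb{C}_\kappa$ to make $\mathfrak{s}_{1/2\pm\epsilon}=\kappa$ (this comes for free from $\mathrm{cov}(\mathcal{M})\leq\mathfrak{s}_{1/2\pm\epsilon}$, so there is no diagonalizing poset to design), and then drives $\mathfrak{d}$ \emph{down} by a Hechler iteration $\mathbb{D}_{\aleph_1}$ of length only $\aleph_1$: since this iteration has uncountable cofinality, every real in $M^{\mathbb{D}_{\aleph_1}}$ appears at an intermediate stage and is dominated by the next Hechler real, so the $\aleph_1$ Hechler generics form a dominating family and $\mathfrak{d}=\aleph_1$. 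The role of Lemmas \ref{main lemma epsilon families} and \ref{iteration lemma of epsilon families} is precisely to show that this short Hechler tail does not decrease $\mathfrak{s}_{1/2\pm\epsilon}$ — so Corollary \ref{epsilonnotaddit} is not "the wrong direction"; it is the preservation result the paper needs, applied to the tail rather than to the whole construction. You should also note that you cite Corollary \ref{epsilonnotaddit} in your step (iii) after declaring it irrelevant, which is internally inconsistent.
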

\begin{proof}
Fix $\kappa> \aleph_1$. Let $\mathbb{C}_\kappa$ be a finite support iteration of Cohen forcing of length $\kappa$. Then $V^{\mathbb{C}_\kappa} \models $ cov$(\mathcal{M}) = \mathfrak{s}_{1/2\pm \epsilon} = \kappa$. Denote $M= V^{\mathbb{C}_\kappa}$. Now, let $\mathbb{D}_{\aleph_1}$ be a finite support iteration of Hechler forcing of length $\aleph_1$ over $M$. Then $M^{\mathbb{D}_{\aleph_1}} \models \mathfrak{d} = \aleph_1$ because $\aleph_1$ dominating reals are added. By \ref{iteration lemma of epsilon families} and \ref{main lemma epsilon families}, $\mathbb{D}_{\aleph_1}$ has property $(\star)$, so $M^{\mathbb{D}_{\aleph_1}} \models \mathfrak{s}_{1/2\pm \epsilon}=\kappa$.

\end{proof}

\begin{cor}
Con$( \mathfrak{r}_{1/2\pm \epsilon} <  \mathfrak{b} )$    
\end{cor}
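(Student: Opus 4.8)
The plan is to dualize Corollary \ref{consistency of d smaller than s1/2}; this time, however, no preliminary model is needed, because a single sufficiently long finite support iteration of Hechler forcing will both push $\mathfrak{b}$ up and, via property $(\star)$, keep the ground model reals $\epsilon$-almost $1/2$-reaping. Concretely I would assume $V\models\mathrm{CH}$ and force with $\mathbb{D}_{\aleph_2}=\langle\mathbb{D}_\alpha,\dot{\mathbb{D}}_\alpha:\alpha<\aleph_2\rangle$, the finite support iteration of Hechler forcing of length $\aleph_2$. This is ccc, each iterand is $\sigma$-centered, $|\mathbb{D}_{\aleph_2}|=\aleph_2$, and hence $V^{\mathbb{D}_{\aleph_2}}\models 2^{\aleph_0}=\aleph_2$.

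First I would check the routine fact that $V^{\mathbb{D}_{\aleph_2}}\models\mathfrak{b}=\aleph_2$: any $F\subseteq\omega^\omega$ of size $\aleph_1$ in the extension already appears in some $V^{\mathbb{D}_\alpha}$ with $\alpha<\aleph_2$, by the ccc and the regularity of $\aleph_2$, and the Hechler real added at stage $\alpha$ dominates all of $V^{\mathbb{D}_\alpha}$, hence all of $F$; so no such $F$ is unbounded.

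Next I would show that $V^{\mathbb{D}_{\aleph_2}}\models{}$``$[\omega]^\omega\cap V$ is $\epsilon$-almost $1/2$-reaping''. For this one first needs that $\mathbb{D}_{\aleph_2}$ has property $(\star)$: limit stages are handled by Lemma \ref{iteration lemma of epsilon families}, and at a successor stage $\mathbb{D}_{\alpha+1}=\mathbb{D}_\alpha*\dot{\mathbb{D}}_\alpha$ one applies Lemma \ref{main lemma epsilon families} to the $\sigma$-centered forcing $\dot{\mathbb{D}}_\alpha$ inside $V^{\mathbb{D}_\alpha}$ and then feeds the resulting names through property $(\star)$ of $\mathbb{D}_\alpha$, exactly as in the proof of Lemma \ref{iteration lemma of epsilon families}. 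Granting this, take any $\mathbb{D}_{\aleph_2}$-name $\dot{X}$ for an element of $[\omega]^\omega$ and let $\{X_i\}_{i\in\omega}\subseteq V$ be the sets provided by property $(\star)$. A countable family cannot be $\epsilon$-almost bisecting, since it cannot even be splitting ($\mathfrak{s}>\aleph_0$), so one can pick $A\in[\omega]^\omega\cap V$ with $\frac{d_n(X_i\cap A)}{d_n(A)}\notin(1/2-\epsilon,\epsilon+1/2)$ for infinitely many $n$, for every $i$. Property $(\star)$ then forces $\exists^\infty n\,\big(\frac{d_n(\dot{X}\cap A)}{d_n(A)}\notin(1/2-\epsilon,\epsilon+1/2)\big)$, i.e.\ $\dot{X}$ fails to $\epsilon$-almost bisect the ground model set $A$. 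Hence in $V^{\mathbb{D}_{\aleph_2}}$ no set $\epsilon$-almost bisects all members of $[\omega]^\omega\cap V$, so $[\omega]^\omega\cap V$ is an $\epsilon$-almost $1/2$-reaping family of size $\aleph_1$.

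Putting the two parts together, $V^{\mathbb{D}_{\aleph_2}}\models\mathfrak{r}_{1/2\pm\epsilon}=\aleph_1<\aleph_2=\mathfrak{b}$, which gives the desired consistency. The step I expect to need the most care is the propagation of property $(\star)$ along the iteration: unlike the length-$\aleph_1$ iteration used in Corollary \ref{consistency of d smaller than s1/2}, the forcing $\mathbb{D}_{\aleph_2}$ is not $\sigma$-centered (its length exceeds $(2^{\aleph_0})^V$), so property $(\star)$ cannot simply be read off Lemma \ref{main lemma epsilon families} for the whole iteration but must genuinely be carried through the successor and limit steps — which works because ``$\frac{d_n(Y\cap A)}{d_n(A)}\notin(1/2-\epsilon,\epsilon+1/2)$ for infinitely many $n$'' is arithmetic in $(Y,A)$ and hence absolute between the intermediate extensions.
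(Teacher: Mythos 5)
Your proof is correct and takes essentially the same approach as the paper's: force with a finite support Hechler iteration over a CH ground model, use the standard fact that $\mathfrak{b}$ becomes large, and use property $(\star)$ (packaged in the paper as Corollary~\ref{epsilonnotaddit}) to conclude that $V\cap[\omega]^\omega$ remains an $\epsilon$-almost $1/2$-reaping family of size $\aleph_1$. The only differences are cosmetic — you fix $\kappa=\aleph_2$ rather than an arbitrary $\kappa>\aleph_1$, and you usefully spell out both the successor-step propagation of $(\star)$ and the reduction from property $(\star)$ to preservation of the reaping family (via the observation that a countable family in $V$ cannot be $\epsilon$-almost bisecting and absoluteness of the defining arithmetic formula), which the paper leaves implicit in its appeal to Corollary~\ref{epsilonnotaddit}.
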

\begin{proof}
Fix $\kappa> \aleph_1$ and assume CH in the ground model. Let $\mathbb{D}_{\kappa}$ be a finite support iteration of Hechler forcing of length $\kappa$. It is known that $V^{\mathbb{D}_\kappa} \models \mathfrak{b}=\kappa$. Besides, the set $V \cap [\omega]^\omega$ remains an $\epsilon$-almost reaping family in $V^{\mathbb{D}_\kappa}$ by \ref{epsilonnotaddit}. Thus $V^{\mathbb{D}_\kappa} \models \mathfrak{r}_{1/2\pm \epsilon}= \aleph_1$. 

\end{proof}

We can obtain a stronger result than \ref{consistency of d smaller than s1/2} using a variation of Mathias forcing.

\begin{defn}
 Let $\mathcal{F}$ be a filter over $\omega$. \textit{Mathias forcing with $\mathcal{F}$}, $\mathbb{M}_{\mathcal{F}}$, consists of all pairs $(s,A)$ where $s$ is a finite subset of $\omega$, and $A \in \mathcal{F}$. The order is given by $(t,B)\leq (s,A)$ if $s\subseteq t$, $B\subseteq A$ and $t \setminus s \subseteq A$. It generically adds a new real $m$ with $m=\bigcup\{s:(s,A)\in G$ for some $A\}$ where $G$ denotes a $\mathbb{M}_{\mathcal{F}}$-generic filter over $V$. It is known that $m$ is a pseudo-intersection of $\mathcal{F}$. Note that $\mathbb{M}_{\mathcal{F}}$ is a $\sigma$-centered forcing. 
\end{defn}

The following proposition is a known result about independent families (see exercises (A12) and (A13) in chapter VIII of \cite{Kunen1980}).

\begin{prop}\label{filter of an independent family}
Let $\mathcal{I}$ be an independent family. There is an filter $\mathcal{F}$ over $\omega$ such that 
\[
\Vdash_{\mathbb{M}_{\mathcal{F}}}  \text{``} \mathcal{I} \cup \{\dot{m}\} \text{ is an independent family''}
\]
where $\dot{m}$ is a $\mathbb{M}_{\mathcal{F}}$-name of the generic real, and for all $x \subseteq \omega$ with $x \not\in \mathcal{I}$ we have
\[
\Vdash_{\mathbb{M}_{\mathcal{F}}}  \text{``} \mathcal{I} \cup \{\dot{m}, x\} \text{ is not an independent family''}
\]
\end{prop}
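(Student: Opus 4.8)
The plan is to construct the filter $\mathcal{F}$ as the filter generated by $\mathcal{I}$ together with all co-infinite ``boolean combination'' complements, arranged so that (a) every positive boolean combination from $\mathcal{I}$ still meets the generic real $\dot m$ in an infinite set and avoids it on an infinite set (so $\mathcal{I}\cup\{\dot m\}$ stays independent), and (b) $\dot m$ is almost contained in every set of a fixed ``reaping family'' associated to $\mathcal{I}$, which will kill independence of $\mathcal{I}\cup\{\dot m, x\}$ for every $x\notin\mathcal{I}$. Concretely, for a finite disjoint pair $\mathcal{A},\mathcal{B}\subseteq\mathcal{I}$ write $B(\mathcal{A},\mathcal{B})=\bigcap_{A\in\mathcal{A}}A\cap\bigcap_{B\in\mathcal{B}}(\omega\setminus B)$; these ``atoms'' are all infinite by independence, and any two of them are either equal or disjoint when taken at the same finite level, while refinements split each atom into infinitely many further infinite atoms. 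I would take $\mathcal{F}$ to be the filter generated by $\{\,\omega\setminus F : F\subseteq\omega \text{ is contained in finitely many atoms}\,\}$ together with $\mathcal{I}$ itself — equivalently, the filter dual to the ideal generated by $\mathcal{I}^c:=\{\omega\setminus A:A\in\mathcal{I}\}$ and the sets that are ``small'' relative to the atom structure. One checks this is a proper filter precisely because $\mathcal{I}$ is independent: no finite boolean combination is empty or finite.

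First I would verify that $\Vdash_{\mathbb{M}_\mathcal{F}} \mathcal{I}\cup\{\dot m\}$ is independent. Fix disjoint finite $\mathcal{A},\mathcal{B}\subseteq\mathcal{I}$; I must show $B(\mathcal{A},\mathcal{B})\cap\dot m$ and $B(\mathcal{A},\mathcal{B})\setminus\dot m$ are both forced infinite. The first is immediate: $\dot m$ is a pseudo-intersection of $\mathcal{F}$, and $B(\mathcal{A},\mathcal{B})\in\mathcal{F}$, so $\dot m\subseteq^* B(\mathcal{A},\mathcal{B})$, and $\dot m$ is infinite; intersecting with an infinite set that almost contains $\dot m$ still gives an infinite set. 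For $B(\mathcal{A},\mathcal{B})\setminus\dot m$ infinite: given a condition $(s,C)$ with $C\in\mathcal{F}$, I use that $C\cap B(\mathcal{A},\mathcal{B})$ is infinite (as $C\cap B(\mathcal{A},\mathcal{B})\in\mathcal{F}$, hence infinite), so I can pick $k\in C\cap B(\mathcal{A},\mathcal{B})$ above $\max s$ and pass to a stronger condition $(s, C\setminus\{k\})$ — note removing a single point keeps $C\setminus\{k\}\in\mathcal{F}$ — forcing $k\notin\dot m$ while $k\in B(\mathcal{A},\mathcal{B})$. A density argument over such $k$'s then gives $B(\mathcal{A},\mathcal{B})\setminus\dot m$ infinite.

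The harder direction, and what I expect to be the main obstacle, is showing that for every $x\subseteq\omega$ with $x\notin\mathcal{I}$, the family $\mathcal{I}\cup\{\dot m,x\}$ is forced \emph{not} independent. The idea is that since $x\notin\mathcal{I}$ and $\mathcal{I}$ is (I should assume) a \emph{maximal} independent family — or at least that the relevant witness exists; I would phrase the proposition so $\mathcal{I}$ is maximal, matching Kunen's exercises — there is a finite boolean combination $B(\mathcal{A},\mathcal{B})$ from $\mathcal{I}$ such that either $B(\mathcal{A},\mathcal{B})\cap x$ or $B(\mathcal{A},\mathcal{B})\setminus x$ is finite. Say $B(\mathcal{A},\mathcal{B})\setminus x$ is finite, i.e.\ $B(\mathcal{A},\mathcal{B})\subseteq^* x$. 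I need to arrange that $B(\mathcal{A},\mathcal{B})\cap\dot m\setminus x$ is \emph{finite}, which together with $B(\mathcal{A},\mathcal{B})\cap\dot m$ infinite would witness the failure of independence for $\{\dot m, x\}$ relative to the combination coming from $\mathcal{A}\cup\{x\}$ versus $\mathcal{B}$ — but actually the cleanest route is: $\dot m\subseteq^* B(\mathcal{A},\mathcal{B})\subseteq^* x$ forces $\dot m\setminus x$ finite, so $\dot m\cap(\omega\setminus x)$ is finite, which directly says $\{\dot m,x\}$ is not independent (the combination $\dot m\cap(\omega\setminus x)$ is finite). For this I must ensure $B(\mathcal{A},\mathcal{B})\in\mathcal{F}$ for the \emph{right} combination depending on $x$; but $x$ ranges over all of $\mathcal{P}(\omega)$ in the ground model, so I cannot pre-select one combination. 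The resolution is that $\mathcal{F}$ already contains \emph{all} the atoms $B(\mathcal{A},\mathcal{B})$, hence $\dot m$ is almost contained in every atom simultaneously, which is impossible unless — and here is the real point — one does not take $\mathcal{F}$ to contain all atoms, but rather takes $\mathcal{F}$ to be generated by $\mathcal{I}\cup\{\omega\setminus A: A\in\mathcal{I}\}$, wait, that is not a filter either. The correct construction, which I would carry out carefully, is the one in Kunen's hint: enumerate the atoms as $\{E_n:n\in\omega\}$ and build $\mathcal{F}$ so that the generic diagonalizes, meeting each $E_n$ but in a way that a \emph{single} pseudo-intersection works; concretely $\mathcal{F}=\{Y\subseteq\omega: \exists k\ \forall n\geq k\ (E_n\setminus Y \text{ is finite})\}$ — the ``eventually almost-containing'' filter — which is proper since the $E_n$ are infinite and, crucially, makes $\dot m$ almost-contained in $E_n$ for all large $n$, so for \emph{any} $x$ not in $\mathcal{I}$ the witnessing atom $B(\mathcal{A},\mathcal{B})=E_n$ for some $n$, and if $n$ is large enough $\dot m\subseteq^* E_n\subseteq^* x$ (or $\subseteq^*\omega\setminus x$), done; the finitely many small-index atoms are handled by noting we can shrink $\mathcal{F}$ further or re-enumerate. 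The bookkeeping that every $x\notin\mathcal{I}$ has its witnessing atom at arbitrarily large index — so that the ``eventually'' in the filter definition actually catches it — is the delicate combinatorial core, and I would spend the bulk of the proof there, using maximality of $\mathcal{I}$ to guarantee that for each $x$ infinitely many atoms are almost-homogeneous for $x$.
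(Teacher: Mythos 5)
Your proposal has a genuine gap, and it is concentrated exactly where you yourself sensed trouble.

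The most serious problem is the assumption that $\mathcal{I}$ is \emph{maximal}. The proposition makes no such assumption, and the paper's application of it crucially needs $\mathcal{I}$ to be non-maximal: one starts from a countable independent family $\mathcal{I}_0$ and uses this proposition at each stage of a length-$\omega_1$ iteration to force a maximal $*$-independent family of size $\aleph_1$. If $\mathcal{I}$ is not maximal, there are ground-model sets $x$ with $\mathcal{I}\cup\{x\}$ still independent, and for such $x$ \emph{no} Boolean combination $B(\mathcal{A},\mathcal{B})$ is almost contained in $x$ or in $\omega\setminus x$ --- so your ``witnessing atom'' does not exist, and your strategy for the second clause has nothing to grab onto. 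Separately, the concrete filter you land on, $\mathcal{F}=\{Y: \exists k\,\forall n\geq k\ (E_n\setminus Y\text{ finite})\}$, degenerates to the Fr\'echet filter: at any fixed finite level $F\subseteq\mathcal{I}$ the $2^{|F|}$ atoms partition $\omega$, and the atoms of a high enough level all have index $\geq k$, so $\omega\setminus Y$ is a finite union of the finite sets $E_n\setminus Y$ and is itself finite. And even setting that aside, $\dot m\subseteq^* E_n$ for all large $n$ is impossible, since distinct atoms at the same level are disjoint infinite sets and $\dot m$ cannot be almost contained in two disjoint sets. (Your first half also quietly uses $B(\mathcal{A},\mathcal{B})\in\mathcal{F}$, which is not true for the paper's $\mathcal{F}$ either; the correct route there is a direct density argument, not pseudo-intersection.)

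What the paper actually does is transfer the maximality from $\mathcal{I}$ to $\mathcal{F}$. Take, by Zorn's lemma, a filter $\mathcal{F}$ maximal among filters with the property that every $Y\in\mathcal{F}$ meets every $I\in\mathcal{I}^\star$ (the finite Boolean combinations of $\mathcal{I}$) in an infinite set. Independence of $\mathcal{I}\cup\{\dot m\}$ then follows from density of the sets $B_n=\{(s,A):\exists m>n\ (m\in s\cap I)\}$ and $C_n=\{(s,A):\exists m>n\ (m\in I,\ m\notin s\cup A)\}$, using only that $A\cap I$ is infinite for $A\in\mathcal{F}$. For the second clause, given $x$ with $\mathcal{I}\cup\{x\}$ independent, one splits by cases: if $x\in\mathcal{F}$ then $\dot m\subseteq^* x$ is forced and $\dot m\cap(\omega\setminus x)$ is finite; if $x\notin\mathcal{F}$, then by maximality of $\mathcal{F}$ the filter generated by $\mathcal{F}\cup\{x\}$ must violate the intersection property, producing $y\in\mathcal{F}$ and $I\in\mathcal{I}^\star$ with $y\cap x\cap I$ finite, whence $\dot m\cap x\cap I\subseteq^* y\cap x\cap I$ is forced finite. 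This dichotomy, driven by maximality of the filter rather than of the family, is exactly the missing mechanism in your proposal.
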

\begin{proof}
Denote as $\mathcal{I}^\star$ the set of finite Boolean combinations of $\mathcal{I}$. Consider a filter $\mathcal{F}$ over $\omega$ such that
\[
Y \in \mathcal{F} \rightarrow \text{ for all $I\in \mathcal{I}^\star$ we have that  $Y \cap I$  is infinite }
\]
and $\mathcal{F}$ is maximal with  this property. Let $\dot{m}$ be a $\mathbb{M}_{\mathcal{F}}$-name of the generic real. Consider $I \in \mathcal{I}^\star$. For each $n \in \omega$ define
\[
\begin{array}{c}
     B_n := \{ (s,A) \, | \, \text{ there is $m>n$ such that $m \in s \cap I$ } \}  \\ \\ 
     C_n:=  \{ (s,A) \, | \, \text{ there is $m>n$ such that $m \in I$ but $m \not\in s \cup A$} \}
\end{array}
\]
It is easy to check that $B_n$ and $C_n$ are dense sets. Therefore, $\Vdash_{\mathbb{M}_{\mathcal{F}}}  \text{``} I \cap \dot{m} \text{ is infinite''}$ and  $\Vdash_{\mathbb{M}_{\mathcal{F}}}  \text{``} I \cap \omega\setminus\dot{m} \text{ is infinite''}$. Now, consider $x \subseteq \omega$ such that $x \not\in \mathcal{I}$. If $\{x\}\cup \mathcal{I}$ is not an independent family, then there is nothing to do. Thus, suppose that $\{x\}\cup \mathcal{I}$ is an independent family. If $x \in \mathcal{F}$, then $\Vdash_{\mathbb{M}_{\mathcal{F}}} \dot{m} \subseteq^{\star} x$. If $x \not\in \mathcal{F}$, then there is $y \in \mathcal{F}$ and $I \in \mathcal{I}^\star$ such that $y \cap x \cap I$ is finite. Since $\Vdash_{\mathbb{M}_{\mathcal{F}}} \dot{m} \subseteq^{\star} y$, then  $\Vdash_{\mathbb{M}_{\mathcal{F}}} \text{``}\dot{m} \cap x \cap I \text{ is finite''}$.

\end{proof}
\begin{cor}
Con$( \mathfrak{i}< \mathfrak{s}_{1/2\pm \epsilon} )$    
\end{cor}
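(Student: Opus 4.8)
The plan is to follow the two-stage strategy of Corollary~\ref{consistency of d smaller than s1/2}, but in the second stage to replace the Hechler iteration (which shrinks $\mathfrak{d}$) by a short iteration of Mathias-with-filter forcings that produces a \emph{small maximal independent family}. Assume GCH in the ground model $V$ and fix a regular $\kappa>\aleph_1$. Let $M=V^{\mathbb{C}_\kappa}$ be as in Corollary~\ref{consistency of d smaller than s1/2}, so that $M\models \mathrm{cov}(\mathcal{M})=\mathfrak{s}_{1/2\pm\epsilon}=\mathfrak{c}=\kappa$. The order of the two stages is essential: the Cohen blow-up must come \emph{first}, since a Cohen real splits every ground-model infinite set and would therefore turn any ground-model maximal independent family into a non-maximal one; running the $\sigma$-centered iteration afterwards, property $(\star)$ will keep $\mathfrak{s}_{1/2\pm\epsilon}$ equal to $\kappa$.

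Over $M$ I would build a finite support iteration $\langle\mathbb{P}_\alpha,\dot{\mathbb{Q}}_\alpha:\alpha<\omega_1\rangle$ together with an increasing sequence of independent families $\mathcal{I}_\alpha\in M^{\mathbb{P}_\alpha}$ as follows. Start from any fixed countable independent family $\mathcal{I}_0$ of $M$ (or $\mathcal{I}_0=\emptyset$, which is vacuously independent). Given $\mathcal{I}_\alpha$, work in $M^{\mathbb{P}_\alpha}$, apply Proposition~\ref{filter of an independent family} to $\mathcal{I}_\alpha$ to obtain a filter $\mathcal{F}_\alpha$, let $\dot{\mathbb{Q}}_\alpha$ be a $\mathbb{P}_\alpha$-name for $\mathbb{M}_{\mathcal{F}_\alpha}$ with generic real $m_\alpha$, and set $\mathcal{I}_{\alpha+1}=\mathcal{I}_\alpha\cup\{m_\alpha\}$; at limits take unions. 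A routine induction shows each $\mathcal{I}_\alpha$ is an independent family in $M^{\mathbb{P}_\alpha}$: the successor step is exactly the first conclusion of Proposition~\ref{filter of an independent family}, and at limit stages one uses that independence is a property of finite subfamilies and that ``infinite'' is absolute. Put $\mathcal{I}=\bigcup_{\alpha<\omega_1}\mathcal{I}_\alpha$, so $|\mathcal{I}|\le\aleph_1$.

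The crucial claim is that $\mathcal{I}$ is maximal in $M^{\mathbb{P}_{\omega_1}}$, so that $M^{\mathbb{P}_{\omega_1}}\models\mathfrak{i}\le\aleph_1$. Let $x\in[\omega]^\omega\cap M^{\mathbb{P}_{\omega_1}}$ with $x\notin\mathcal{I}$. Since $\mathbb{P}_{\omega_1}$ is ccc and we use finite supports, $x\in M^{\mathbb{P}_\gamma}$ for some $\gamma<\omega_1$, and $x\notin\mathcal{I}_\gamma$ because $\mathcal{I}_\gamma\subseteq\mathcal{I}$. Then the second conclusion of Proposition~\ref{filter of an independent family}, applied to $\mathcal{I}_\gamma$ in $M^{\mathbb{P}_\gamma}$, says that $\mathbb{M}_{\mathcal{F}_\gamma}$ forces $\mathcal{I}_\gamma\cup\{m_\gamma,x\}=\mathcal{I}_{\gamma+1}\cup\{x\}$ to be non-independent; non-independence is witnessed by a finite subfamily, hence is upward absolute, so $\mathcal{I}\cup\{x\}\supseteq\mathcal{I}_{\gamma+1}\cup\{x\}$ is non-independent in $M^{\mathbb{P}_{\omega_1}}$. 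The point — and this is why length $\omega_1$ suffices even though $M^{\mathbb{P}_\gamma}$ already has continuum $\kappa$ — is that a single $\mathbb{M}_{\mathcal{F}_\gamma}$-step diagonalizes against \emph{every} $x\in M^{\mathbb{P}_\gamma}$ at once, so no bookkeeping over candidate sets is needed.

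It remains to see $M^{\mathbb{P}_{\omega_1}}\models\mathfrak{s}_{1/2\pm\epsilon}=\kappa$. Each $\mathbb{M}_{\mathcal{F}_\alpha}$ is $\sigma$-centered, so $\mathbb{P}_{\omega_1}$ is a finite support iteration of $\sigma$-centered forcings, hence $\sigma$-centered, and therefore has property $(\star)$ by Lemma~\ref{main lemma epsilon families}; being ccc of size $\le\kappa$ it preserves $\mathfrak{c}=\kappa$. Then, exactly as in the proof of Corollary~\ref{consistency of d smaller than s1/2}, property $(\star)$ together with $M\models\mathfrak{s}_{1/2\pm\epsilon}=\kappa$ yields $\mathfrak{s}_{1/2\pm\epsilon}=\kappa$ in $M^{\mathbb{P}_{\omega_1}}$: any purported $\epsilon$-almost bisecting family of size $<\kappa$ produces, via the witnesses supplied by $(\star)$, a family of fewer than $\kappa$ sets of $M$, which by $M\models\mathfrak{s}_{1/2\pm\epsilon}=\kappa$ fails to $\epsilon$-almost bisect some $A\in M$, whereupon $(\star)$ forces that no member of the original family $\epsilon$-almost bisects $A$ either. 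Combining, $M^{\mathbb{P}_{\omega_1}}\models\mathfrak{i}\le\aleph_1<\kappa=\mathfrak{s}_{1/2\pm\epsilon}$; since $\mathfrak{d}\le\mathfrak{i}$ this strengthens Corollary~\ref{consistency of d smaller than s1/2}. I expect the main thing to get right to be the interaction of the two stages — that the blow-up precedes the diagonalization, and that the short $\mathbb{M}_{\mathcal{F}}$-iteration genuinely achieves maximality against \emph{all} reals of the large-continuum intermediate model rather than merely $\aleph_1$-many; the remaining steps are absoluteness bookkeeping and the already-established lemmas.
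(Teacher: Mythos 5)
Your proof is essentially the paper's: both first force with $\mathbb{C}_\kappa$ to push $\mathfrak{s}_{1/2\pm\epsilon}$ up to $\kappa$, then iterate Mathias forcing with the filters from Proposition~\ref{filter of an independent family} for $\omega_1$ steps over the Cohen extension to manufacture a maximal independent family of size $\aleph_1$, using property $(\star)$ to keep $\mathfrak{s}_{1/2\pm\epsilon}=\kappa$. The only deviation is in how you verify that the two-sided iteration $\mathbb{P}_{\omega_1}$ has property $(\star)$: you appeal to the external fact that a finite support iteration of $\sigma$-centered forcings of length $\leq\mathfrak{c}$ is itself $\sigma$-centered and then invoke Lemma~\ref{main lemma epsilon families} once, whereas the paper applies Lemma~\ref{main lemma epsilon families} to each $\sigma$-centered iterand and then propagates $(\star)$ through the limit via Lemma~\ref{iteration lemma of epsilon families}, avoiding any reliance on preservation of $\sigma$-centeredness under iteration; both routes are valid, but the paper's is self-contained.
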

\begin{proof}
Fix $\kappa> \aleph_1$. Let $\mathbb{C}_\kappa$ be a finite support iteration of Cohen forcing of length $\kappa$. Then $V^{\mathbb{C}_\kappa} \models $ cov$(\mathcal{M}) = \mathfrak{s}_{1/2\pm \epsilon} = \kappa$. Denote $M= V^{\mathbb{C}_\kappa}$. Let $\mathcal{I}_0 \in M$ be a countable independent family. Build in $M$ a iterated forcing construction $\mathbb{P}=(\mathbb{P}_\alpha, \dot{\mathbb{Q}}_\alpha : \alpha < \omega_1)$ such that $\Vdash_\alpha \text{``}\mathcal{I}_0 \cup \{\dot{m}_\beta : \beta< \alpha \} \text{ is an independent family''}$ where $\{\dot{m}_\beta : \beta< \alpha \} $ are the $\mathbb{P}_\alpha$-names of generic reals added by $\mathbb{P}_\alpha$, and $\Vdash_\alpha \text{``}\dot{\mathbb{Q}}_\alpha $ \text{ is a Mathias forcing with $\dot{\mathcal{F}}_\alpha$'' } where $\dot{\mathcal{F}}_\alpha$ is a $\mathbb{P}_\alpha$-name of the filter given by \ref{filter of an independent family} with the 
family $\mathcal{I}_0 \cup \{\dot{m}_\beta : \beta< \alpha \}$. Work in $M^{\mathbb{P}}$. Denote as $\mathcal{I}$ the family $\mathcal{I}_0 \cup \{m_\beta : \beta< \omega_1\}$. By \ref{filter of an independent family}, $\mathcal{I}$ is a maximal independent family. Then, $M^{\mathbb{P}} \models \mathfrak{i}=\aleph_1$. Note that $\mathbb{P}$ has property ($\star$) because of \ref{main lemma epsilon families} and \ref{iteration lemma of epsilon families}. Therefore, $M^{\mathbb{P}} \models \mathfrak{s}_{1/2\pm \epsilon} = \kappa$.

\end{proof}

\section{Hechler forcing} \label{Hechler forcing}

Recall that the ideal $\mathcal{E}$ is the $\sigma$-ideal generated by closed sets of measure zero. By definition, we have that non$(\mathcal{E})$ is smaller than non$(\mathcal{N})$ and non$(\mathcal{M})$. In \cite{Halway2018}, they prove that $\mathfrak{s}_{1/2}^{\infty}$ is also smaller than these two cardinals. It turns out that $\mathfrak{s}_{1/2}^{\infty}\leq$ non$(\mathcal{E})$. 

\begin{prop}
Let $X$ be an infinite subset of $\omega$. Consider the set
\[
B(X)=\{ Y \in [\omega]^\omega \, | \, Y \text{ cofinally bisects } X \}.
\] 
Then $[\omega]^\omega \setminus B(X)$ belongs to $\mathcal{E}$.
\end{prop}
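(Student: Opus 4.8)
The plan is to realise $[\omega]^\omega\setminus B(X)$ as a subset of a countable union of closed Lebesgue-null subsets of $2^\omega$; identifying subsets of $\omega$ with their characteristic functions, so that $\mathcal{E}$ is computed in $2^\omega$ with its usual topology and the uniform product measure $\mu$, this suffices because $\mathcal{E}$ is a $\sigma$-ideal.

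First I would reformulate cofinal bisection through the increasing enumeration $X=\{x_0<x_1<\cdots\}$. For $Y\subseteq\omega$ put $S_k(Y)=|Y\cap\{x_0,\dots,x_{k-1}\}|$. Since $X$ is infinite, as $n$ ranges over $\omega$ the value $|X\cap n|$ attains every $k\in\omega$, and whenever $|X\cap n|=k\ge 1$ one has $\frac{d_n(Y\cap X)}{d_n(X)}=\frac{S_k(Y)}{k}$; hence $Y$ cofinally bisects $X$ if and only if $S_k(Y)=k/2$ for infinitely many $k$ (which forces $k$ even). Consequently, if $Y\in[\omega]^\omega\setminus B(X)$ then $\{k:S_k(Y)=k/2\}$ is finite, so $Y\in C_m$ where
\[
C_m:=\{Y\in 2^\omega:\ S_k(Y)\neq k/2\ \text{for all}\ k\ge m\},
\]
and therefore $[\omega]^\omega\setminus B(X)\subseteq\bigcup_{m\in\omega}C_m$.

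Next I would check that each $C_m$ is a closed null set. Closedness is routine: for each fixed $k$ the set $\{Y:S_k(Y)=k/2\}$ depends only on the coordinates $x_0,\dots,x_{k-1}$, hence is clopen, so $C_m$ is an intersection of clopen sets. For nullity, observe that under $\mu$ the coordinate functions $\mathbf{1}[x_i\in Y]$ ($i\in\omega$) are i.i.d.\ fair coins, so $k\mapsto 2S_k(Y)-k$ is the trajectory of a simple symmetric random walk on $\mathbb{Z}$ started at $0$; by the recurrence of this walk, $\mu(\{Y:S_k(Y)=k/2\ \text{for infinitely many}\ k\})=1$, and $C_m$ is contained in the complement of this set, so $\mu(C_m)=0$.

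Finally, each $C_m$ being a closed null subset of $2^\omega$ lies in $\mathcal{E}$, whence $[\omega]^\omega\setminus B(X)\subseteq\bigcup_m C_m\in\mathcal{E}$. The one step carrying real content is the nullity of the $C_m$: a union bound over $k$ is useless, since the events $\{S_k\neq k/2\}$ are strongly dependent and each has $\mu$-measure tending to $1$, so one genuinely needs the recurrence of the one-dimensional simple random walk. If one wants to keep the argument self-contained, it is enough to prove that this walk returns to $0$ with probability $1$ — equivalently, that $\sum_n\binom{2n}{n}2^{-2n}=\infty$, so that the first-return generating function $F$ satisfies $F(x)=1-\bigl(\sum_n\binom{2n}{n}2^{-2n}x^{2n}\bigr)^{-1}\to 1$ as $x\to 1^-$ — and then to apply the strong Markov property to obtain infinitely many returns almost surely.
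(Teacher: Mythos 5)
Your proof is correct and is essentially the paper's own argument: your sets $C_m=\{Y:S_k(Y)\neq k/2\text{ for all }k\ge m\}$ coincide (up to an index shift) with the closed sets $\bigcap_{j>n}A_{2j}^{\,c}$ appearing in the paper's decomposition $[\omega]^\omega\setminus B(X)=\bigcup_n\bigcap_{j>n}A_{2j}^{\,c}$, and both proofs reduce cofinal bisection to ``$|Y\cap\{x_0,\dots,x_{k-1}\}|=k/2$ for infinitely many $k$''. The only difference is that you supply a self-contained proof of nullity via recurrence of the simple symmetric random walk, whereas the paper cites \cite{Halway2018} for that step.
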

\begin{proof}
Let $\{x_i\}_{i\in \omega}$ be an increasing enumeration of $X$. For each $m\in \omega$ consider the set $ B_{2m} := \{ B \subseteq   \{x_1, ..., x_{2m} \} \, | \, |B|=m \}$, and for each $B\in B_{2m}$ consider the function $f_B: \{x_1, ..., x_{2m} \} \rightarrow \{0,1\}$ where $f_B(x_i)=1$ if and only if $x_i\in B$. Denote as $[f_B]$ the open set generated by $f_B$. Define $A_{2m}:= \bigcup_{B \in B_{2m}} [f_B]$. Note that $Y$ cofinally bisects $X$ if and only if there are infinitely many $m$ such that $Y\in A_{2m}$. Thus
\[
B(X)= \bigcap_{n \in \omega} \bigcup_{m>n} A_{2m}.
\]
Therefore $[\omega]^\omega \setminus B(X)$ is a countable union of closed sets. Furthermore, it is already shown in \cite{Halway2018} that $[\omega]^\omega \setminus B(X)$ is a set of measure zero, so $[\omega]^\omega \setminus B(X)$ is a countable union of closed sets of measure zero. 

\end{proof}

\begin{cor}
 $\mathfrak{s}_{1/2}^{\infty} \leq $ non$(\mathcal{E})$ and cov$(\mathcal{E}) \leq \mathfrak{r}_{1/2}^{\infty}$  
\end{cor}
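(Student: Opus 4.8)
Both inequalities are immediate consequences of the preceding Proposition via the standard duality between ``uniformity'' and ``covering'' of an ideal. The only preliminary point is to reconcile the ambient spaces: $\mathcal{E}$ is a $\sigma$-ideal on $2^\omega$, whereas $B(X)\subseteq[\omega]^\omega$. I would identify a subset of $\omega$ with its characteristic function, note that the set of $Y\subseteq\omega$ which are finite or cofinite is a countable union of singletons, hence lies in $\mathcal{E}$, and therefore $2^\omega\setminus[\omega]^\omega\in\mathcal{E}$. Consequently a set $Y\subseteq[\omega]^\omega$ fails to be in $\mathcal{E}$ iff $Y$ fails to be in $\mathcal{E}$ as a subset of $2^\omega$, and a family of $\mathcal{E}$-sets covers $[\omega]^\omega$ iff augmenting it by the single $\mathcal{E}$-set $2^\omega\setminus[\omega]^\omega$ covers $2^\omega$; since every (cofinally $1/2$-)reaping family is infinite, adding one set does not change its cardinality.

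For $\mathfrak{s}_{1/2}^{\infty}\leq\mathrm{non}(\mathcal{E})$, I would pick $Y\subseteq 2^\omega$ with $Y\notin\mathcal{E}$ and $|Y|=\mathrm{non}(\mathcal{E})$; by the previous remark we may assume $Y\subseteq[\omega]^\omega$. I claim $Y$ is a cofinally bisecting family. Indeed, given any $X\in[\omega]^\omega$, the Proposition gives $[\omega]^\omega\setminus B(X)\in\mathcal{E}$, so $Y\not\subseteq[\omega]^\omega\setminus B(X)$, hence there is $S\in Y\cap B(X)$, i.e.\ $S$ cofinally bisects $X$. Thus $Y$ witnesses $\mathfrak{s}_{1/2}^{\infty}\leq|Y|=\mathrm{non}(\mathcal{E})$.

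For $\mathrm{cov}(\mathcal{E})\leq\mathfrak{r}_{1/2}^{\infty}$, I would take a cofinally $1/2$-reaping family $\mathcal{R}$ with $|\mathcal{R}|=\mathfrak{r}_{1/2}^{\infty}$. By definition there is no $X\in[\omega]^\omega$ cofinally bisecting every member of $\mathcal{R}$; equivalently, for every $X\in[\omega]^\omega$ there is $R\in\mathcal{R}$ with $X\notin B(R)$, i.e.\ $X\in[\omega]^\omega\setminus B(R)$. Hence $\{[\omega]^\omega\setminus B(R):R\in\mathcal{R}\}$ covers $[\omega]^\omega$, each member lies in $\mathcal{E}$ by the Proposition, and together with $2^\omega\setminus[\omega]^\omega$ this is an $\mathcal{E}$-cover of $2^\omega$ of size $|\mathcal{R}|$. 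Therefore $\mathrm{cov}(\mathcal{E})\leq|\mathcal{R}|=\mathfrak{r}_{1/2}^{\infty}$.

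I do not expect a genuine obstacle here: the entire mathematical content was already absorbed into the Proposition, and what remains is the routine ``Tukey'' bookkeeping above. The only point that needs a line of care is the passage between $[\omega]^\omega$ and $2^\omega$ when applying the definitions of $\mathrm{non}(\mathcal{E})$ and $\mathrm{cov}(\mathcal{E})$, which is handled by the observation that the two spaces differ by an $\mathcal{E}$-set.
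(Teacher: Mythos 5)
Your argument is correct and is precisely the routine deduction the paper treats as immediate (the corollary is stated without a written-out proof). The non/cov bookkeeping, together with the observation that $2^\omega\setminus[\omega]^\omega$ is a countable set and hence lies in $\mathcal{E}$, is exactly what is needed, and your two halves correctly unpack the definitions of ``cofinally bisecting family'' and ``cofinally $1/2$-reaping family'' against the proposition that $[\omega]^\omega\setminus B(X)\in\mathcal{E}$.
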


To prove the consistency of $\mathfrak{s}_{1/2}^{\infty} <$ non$(\mathcal{E})$ and cov$(\mathcal{E}) < \mathfrak{r}_{1/2}^{\infty}$, we work with new objects called good block sequences instead of infinite subsets of $\omega$.

\begin{defn}
We say that a sequence $\mathcal{A}=(A_n : n \in \omega )$ of finite subsets of $\omega$ is a block sequence if $\max(A_n)<\min(A_{n+1})$. We say that a set $B \in [\omega]^\omega$ \textit{splits} $\mathcal{A}$ if both $\{n \in \omega \, | \, A_n \subseteq B \}$ and $\{n \in \omega \, | \, A_n \cap B = \emptyset \}$ are infinite.
\end{defn}

\begin{defn}
Let $\mathcal{A}=(A_n : n \in \omega )$ be a block sequence. We say that $\mathcal{A}$ is a \textit{good block sequence} if there is a recursive function $f \in \omega^\omega$ such that $|A_n|=f(n)$.
\end{defn}

There are two cardinal characteristics associated with the previous notions. 
\[
\begin{array}{ll}
    \mathfrak{fs}&= \min\{ | \mathcal{F} | \, | \,  \mathcal{F} \subseteq [\omega]^\omega \text{ and every block sequence is split by a member of $\mathcal{F}$} \}   \\
    \mathfrak{fs}_{g}&= \min\{ | \mathcal{F} | \, | \,  \mathcal{F} \subseteq [\omega]^\omega \text{ and every good block sequence is split by a member of $\mathcal{F}$} \} 
\end{array}
\]
Kamburelis and W\k{e}glorz showed that $\mathfrak{fs} = \max\{ \mathfrak{b}, \mathfrak{s} \}$ (see \cite{Kamburelis1996-KAMS}). These cardinals also have their dual version.
\[
\begin{array}{ll}
    \mathfrak{fr}&= \min\{ | \mathcal{F} | \, | \,  \mathcal{F} \text{ consists of block sequences and no single $A\in [\omega]^\omega$ splits all members of $\mathcal{F}$} \}   \\
    \mathfrak{fr}_{g}&= \min\{ | \mathcal{F} | \, | \,  \mathcal{F} \text{ consists of good block sequences and no single $A\in [\omega]^\omega$ splits all members of $\mathcal{F}$} \}
\end{array}
\]
In \cite{Brendle1998}, Jörg Brendle showed that $\mathfrak{fr} = \min\{ \mathfrak{d}, \mathfrak{r} \}$. There is a relation with $\mathfrak{s}_{1/2}^{\infty}$ and $\mathfrak{r}_{1/2}^{\infty}$.

\begin{prop}
Let $A, B$ be infinite subsets of $\omega$. Let $\{a_i\}_{i \geq 1}$ be the increasing enumeration of $A$. Define $A_n=\{a_{\sum_{i\leq n}2^i}, ..., a_{(\sum_{i \leq n+1} 2^i ) -1}  \}$ and consider the block sequence $\mathcal{A}=(A_n : n \in \omega )$. Note that $\mathcal{A}$ is a good block sequence because $|A_n|=2^{n+1}$. We have that if $B$ splits $\mathcal{A}$ then $B$ cofinally bisects $A$. 
\end{prop}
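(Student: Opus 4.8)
The plan is to reduce the statement to an elementary fact about a $\pm1$-valued path. Write $A=\{a_1<a_2<\cdots\}$ and, for $k\ge 1$, put $g(k)=|\{i\le k:a_i\in B\}|$ and $h(k)=2g(k)-k$, with $g(0)=h(0)=0$. Since $h(k+1)-h(k)=2\bigl(g(k+1)-g(k)\bigr)-1\in\{-1,+1\}$, the sequence $(h(k))_{k\in\omega}$ moves by unit steps and starts at $0$. The first point to record is that $B$ cofinally bisects $A$ as soon as $h(k)=0$ for infinitely many $k$: if $h(k)=0$, take $m=a_k+1$, so that $A\cap m=\{a_1,\dots,a_k\}$ and $B\cap A\cap m=\{a_i\in B:i\le k\}$, whence
\[
\frac{d_m(B\cap A)}{d_m(A)}=\frac{|B\cap A\cap m|}{|A\cap m|}=\frac{g(k)}{k}=\frac12,
\]
and these $m$ are cofinal in $\omega$ as $k$ ranges over an infinite set.

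Because $h$ changes by exactly $\pm1$, it then suffices to show that $h$ takes positive values at arbitrarily large indices \emph{and} negative values at arbitrarily large indices: between a positive value and a later negative value a unit-step path must pass through $0$ (discrete intermediate value theorem), so this produces infinitely many zeros. This is the step where the hypothesis that $B$ splits $\mathcal{A}$ and the lacunary choice $|A_n|=2^{n+1}$ are used, the crucial arithmetic being that each block dominates all earlier ones combined:
\[
\sum_{j<n}|A_j|=\sum_{j<n}2^{j+1}=2^{n+1}-2<2^{n+1}=|A_n|.
\]
With the indexing in the statement, $A_n$ occupies the positions $2^{n+1}-1,\dots,2^{n+2}-2$ in the enumeration of $A$.

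Suppose $A_n\subseteq B$. Passing through $A_n$ adds $+1$ to $h$ at each of its $2^{n+1}$ steps, while at the position just before $A_n$ begins (position $2^{n+1}-2$) one has the trivial bound $h\ge-(2^{n+1}-2)$ because $g\ge0$; hence $h(2^{n+2}-2)\ge-(2^{n+1}-2)+2^{n+1}=2>0$. Since there are infinitely many such $n$ and $2^{n+2}-2\to\infty$, $h$ is positive at arbitrarily large indices. Symmetrically, if $A_{n'}\cap B=\emptyset$, then $h$ drops by $1$ at each step of $A_{n'}$, and using $h(k)\le k$ (from $g(k)\le k$) one gets $h(2^{n'+2}-2)\le(2^{n'+1}-2)-2^{n'+1}=-2<0$; infinitely many such $n'$ give negative values at arbitrarily large indices. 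Combining the two, $h(k)=0$ infinitely often, and by the first paragraph $B$ cofinally bisects $A$.

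The only delicate part is the bookkeeping in the middle step: checking that the $A_j$ really partition an initial segment of $A$ into consecutive intervals of sizes $2^{j+1}$ starting at $a_1$, so that the last position of $A_n$ is $2^{n+2}-2$ and the total length of $A_0,\dots,A_{n-1}$ is $2^{n+1}-2$, which is strictly less than $|A_n|$. Once that is pinned down, the rest is the unit-step path argument above and uses nothing about $B$ beyond the two infinitude clauses in the definition of "$B$ splits $\mathcal{A}$".
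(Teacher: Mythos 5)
Your proof is correct and takes essentially the same approach as the paper: both exploit the fact that block $n$ has size $2^{n+1}$, which strictly exceeds the combined size $2^{n+1}-2$ of all earlier blocks, so that reading to the end of a fully contained block pushes the running density ratio above $1/2$, and reading to the end of a fully excluded block pushes it below $1/2$. The one thing you add is an explicit rendering, via the unit-step path $h(k)=2g(k)-k$, of the discrete intermediate-value step that the paper dispatches with the unproved phrase ``it suffices to show that both $\{n : d_n(B\cap A)/d_n(A)\geq 1/2\}$ and $\{n : d_n(B\cap A)/d_n(A)\leq 1/2\}$ are infinite.''
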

\begin{proof}
Assume that $B$ splits $\mathcal{A}$. To see that $B$ cofinally bisects $A$ it suffices to show that both $\{n\in \omega \, | \, \frac{d_n(B\cap A)}{d_n(A)}\geq 1/2 \}$ and $\{n\in \omega \, | \, \frac{d_n(B\cap A)}{d_n(A)}\leq 1/2 \}$ are infinite. Fix $n\in \omega$. There are $m_1,m_2>n$ such that $A_{m_1} \subseteq B$ and $A_{m_2} \cap B = \emptyset$. Letting $K_1:=a_{(\sum_{i \leq m_1+1} 2^i )+1}$ and $K_2:=a_{(\sum_{i \leq m_2+1} 2^i )+1}$ , we have 
\[
\frac{d_{K_1}(B\cap A)}{d_{K_1}(A)} \geq \frac{2^{m_1+1}}{\sum_{i \leq m_1+1} 2^i } \geq \frac{1}{2}  \text{ \, \, \,    and   \, \, \,  } \frac{d_{K_2}(B\cap A)}{d_{K_2}(A)} \leq \frac{\sum_{i \leq m_2} 2^i}{\sum_{i \leq m_2+1} 2^i } \leq \frac{1}{2}
\]

\end{proof}

As an immediate consequence, we get the following

\begin{cor}\label{inequalities fs and fr}
 $\mathfrak{s}_{1/2}^{\infty} \leq \mathfrak{fs}_{g} \leq \mathfrak{fs}$ and  $\mathfrak{fr} \leq \mathfrak{fr}_{g} \leq  \mathfrak{r}_{1/2}^{\infty}$. 
\end{cor}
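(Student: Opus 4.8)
The plan is to derive Corollary~\ref{inequalities fs and fr} directly by combining the preceding proposition with the trivial observation that every good block sequence is, by definition, a block sequence. Concretely I would break the statement into its four constituent inequalities and treat the two ``monotonicity'' ones separately from the two that actually use the proposition.

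For $\mathfrak{fs}_{g} \leq \mathfrak{fs}$ and $\mathfrak{fr} \leq \mathfrak{fr}_{g}$ there is nothing to compute: if $\mathcal{F} \subseteq [\omega]^\omega$ splits every block sequence then in particular it splits every good block sequence, so any family witnessing $\mathfrak{fs}$ witnesses $\mathfrak{fs}_{g}$; dually, any family of good block sequences not split by a single infinite set is a fortiori a family of block sequences with the same property, so any family witnessing $\mathfrak{fr}_{g}$ witnesses $\mathfrak{fr}$.

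For $\mathfrak{s}_{1/2}^{\infty} \leq \mathfrak{fs}_{g}$ I would fix a family $\mathcal{F}$ of size $\mathfrak{fs}_{g}$ splitting every good block sequence, and take an arbitrary $A \in [\omega]^\omega$. Attach to $A$ the good block sequence $\mathcal{A}=(A_n : n\in\omega)$ built exactly as in the preceding proposition (so $|A_n|=2^{n+1}$). Some $B\in\mathcal{F}$ splits $\mathcal{A}$, and then the proposition gives that $B$ cofinally bisects $A$. Since $A$ was arbitrary, $\mathcal{F}$ is a cofinally bisecting family, whence $\mathfrak{s}_{1/2}^{\infty}\leq|\mathcal{F}|=\mathfrak{fs}_{g}$.

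For the dual $\mathfrak{fr}_{g}\leq\mathfrak{r}_{1/2}^{\infty}$ I would fix a family $\mathcal{R}$ of size $\mathfrak{r}_{1/2}^{\infty}$ such that no single $X\in[\omega]^\omega$ cofinally bisects every member of $\mathcal{R}$, and assign to each $R\in\mathcal{R}$ the good block sequence $\mathcal{A}^{R}$ produced from $R$ by the construction in the proposition (this assignment is deterministic, hence well defined). Put $\mathcal{G}=\{\mathcal{A}^{R}:R\in\mathcal{R}\}$. If some $B\in[\omega]^\omega$ split every member of $\mathcal{G}$, then by the proposition $B$ would cofinally bisect every $R\in\mathcal{R}$, contradicting the choice of $\mathcal{R}$; so $\mathcal{G}$ witnesses $\mathfrak{fr}_{g}$ and $\mathfrak{fr}_{g}\leq|\mathcal{G}|\leq|\mathcal{R}|=\mathfrak{r}_{1/2}^{\infty}$. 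I do not expect a genuine obstacle anywhere: the only mild point to record is that distinct $R$'s may yield the same block sequence, which can only shrink $\mathcal{G}$ and is therefore harmless, and all the real content is already contained in the proposition.
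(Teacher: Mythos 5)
Your proposal is correct and takes exactly the route the paper intends: the paper presents this corollary as an ``immediate consequence'' of the preceding proposition, and your four-part decomposition (two monotonicity inequalities from ``good block sequences are block sequences'', two transfer inequalities from the proposition's construction) is the standard way to unpack that. Nothing is missing.
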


Now, we are going to prove that $\mathfrak{fs}_{g}=\aleph_1$ in the Hechler model. The proof is analogous to the classical proof that shows that Hechler forcing preserves $\omega$-splitting families (see e.g. \cite[Theorem 3.13]{Brendle2009ForcingAT}).

\begin{lemma} \label{preservation of cofinally spliting families}
Assume $\dot{\mathcal{A}}=(\dot{A}_n : n \in \omega )$ is a $\mathbb{D}$-name of a good block sequence. There are good block sequences $\mathcal{A}_i=(A_j^i : j \in \omega )$, $i\in \omega$, such that whenever $B \in [\omega]^\omega$ splits all $\mathcal{A}_i$, then
\[
\Vdash B  \text{ splits } \dot{\mathcal{A}}
\]

\end{lemma}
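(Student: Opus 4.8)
The plan is to mimic the classical argument showing that Hechler forcing preserves $\omega$-splitting families, adapting it to the setting of good block sequences. The key feature of Hechler forcing $\mathbb{D}$ (in the representation $(s,\phi)$ given above) that drives the argument is that it is $\sigma$-centered, with the natural centering: conditions sharing the same stem $s$ are compatible, since given finitely many conditions $(s,\phi_0),\dots,(s,\phi_{k-1})$ the pair $(s,\psi)$ with $\psi = \max_j \phi_j$ (pointwise) is a common extension. So write $\mathbb{D} = \bigcup_{i\in\omega} \mathcal{P}_i$ where $\mathcal{P}_i$ is the set of conditions with stem $s_i$ (fixing an enumeration $\{s_i : i\in\omega\}$ of $\omega^{<\omega}$). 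As in the proof of Lemma~\ref{main lemma epsilon families}, for each $i$ and each $n$ there is a value $y_{i,n} \in \mathcal{P}(\{0,\dots,n-1\})$ such that no condition in $\mathcal{P}_i$ forces $\dot{x}_{\text{(coded set)}} \cap n \neq y_{i,n}$; but here we are approximating a good block sequence $\dot{\mathcal{A}}$ rather than a single real.

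The first step is to extract, for each $i\in\omega$, a candidate good block sequence $\mathcal{A}_i = (A^i_j : j\in\omega)$ from $\mathcal{P}_i$. Since $\dot{\mathcal{A}}$ is a name for a good block sequence, there is a recursive $f\in\omega^\omega$ forced to satisfy $|\dot{A}_n| = f(n)$ (strictly speaking, countably many such $f$ may occur, but by ccc and the definability of "recursive" one can arrange a single $f$ on a dense set, or simply fix $f$ by passing to a condition and relativizing — this is a minor bookkeeping point). For a fixed $i$: by a König's lemma / fusion argument along finite approximations, using the centeredness of $\mathcal{P}_i$, I can build a single block sequence $\mathcal{A}_i$ with $|A^i_j| = f(j)$ such that for every $n$, no condition in $\mathcal{P}_i$ forces that the first $n$ blocks of $\dot{\mathcal{A}}$ differ from $A^i_0,\dots,A^i_{n-1}$. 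Concretely: the set of finite block sequences $(A_0,\dots,A_{n-1})$ with $|A_j|=f(j)$ that are "not rejected" by all of $\mathcal{P}_i$ (meaning some $p\in\mathcal{P}_i$ forces $(\dot{A}_0,\dots,\dot{A}_{n-1}) = (A_0,\dots,A_{n-1})$ — this is possible by centeredness, taking a common refinement of conditions deciding each $\dot{A}_j$) forms a finitely-branching tree, infinite in each level, so König gives a branch $\mathcal{A}_i$. Each $\mathcal{A}_i$ is good since $|A^i_j| = f(j)$ with $f$ recursive.

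The second step is the density argument. Suppose $B\in[\omega]^\omega$ splits every $\mathcal{A}_i$, i.e.\ for each $i$ both $\{j : A^i_j \subseteq B\}$ and $\{j : A^i_j \cap B = \emptyset\}$ are infinite. Fix $p\in\mathbb{D}$ and $n\in\omega$; say $p\in\mathcal{P}_i$ (i.e.\ $p$ has stem $s_i$). Because $\{j : A^i_j\subseteq B\}$ is infinite, pick $j > n$ with $A^i_j\subseteq B$. By the construction of $\mathcal{A}_i$, there is $q\le p$, $q\in\mathbb{D}$, with $q\Vdash (\dot{A}_0,\dots,\dot{A}_j) = (A^i_0,\dots,A^i_j)$, hence $q\Vdash \dot{A}_j\subseteq B$ with $j>n$. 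Thus the set of conditions forcing "$\exists j>n\,(\dot{A}_j\subseteq B)$" is dense for every $n$, so $\Vdash \{n : \dot{A}_n\subseteq B\}$ is infinite; the symmetric argument with $\{j : A^i_j\cap B=\emptyset\}$ gives $\Vdash \{n : \dot{A}_n\cap B = \emptyset\}$ is infinite. Together, $\Vdash B$ splits $\dot{\mathcal{A}}$.

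The main obstacle, and the step requiring the most care, is the König's lemma extraction in step one: I must verify that the approximating tree genuinely has infinite levels, which relies on the centeredness of each $\mathcal{P}_i$ to guarantee that any finite tuple of blocks which is "locally forceable" by individual conditions of $\mathcal{P}_i$ is in fact forceable by a single condition of $\mathbb{D}$ below them all (so that it appears as a node) — and that this is compatible with the recursive size constraint $|A^i_j| = f(j)$, so that the resulting branch is a genuine \emph{good} block sequence rather than merely a block sequence. The uniformity of $f$ across the conditions (and the fact that $f$ can be taken recursive and fixed in the ground model) is the one subtlety that must be addressed explicitly; everything else is routine adaptation of the standard $\sigma$-centered preservation argument.
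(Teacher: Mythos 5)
There is a genuine gap in Step One (the K\"onig's lemma extraction), and it is fatal to the approach as written. You claim that, for a fixed centered piece $\mathcal{P}_i$ (conditions with stem $s_i$), the set of finite tuples of blocks $(A_0,\dots,A_{n-1})$ with $|A_j|=f(j)$ that are ``not rejected'' by $\mathcal{P}_i$ forms a finitely branching tree. It does not. A block sequence name has no a priori bound on where its blocks sit, and fixing only the stem $s_i$ does not localize them: below a condition $(s_i,\phi)$ one can always pass to $(t,\psi)\leq(s_i,\phi)$ with $t$ taking arbitrarily large values, so a name such as $\dot{A}_n := [\dot g(n), \dot g(n)+f(n))$ (where $\dot g$ is the Hechler generic and, say, $|s_i|=0$) has infinitely many possible first blocks even after fixing the centered piece. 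Thus the level-$1$ set of your tree is already infinite, and K\"onig's lemma does not apply. (If instead you read ``not rejected'' in the Lemma~\ref{main lemma epsilon families} sense --- no $p\in\mathcal{P}_i$ forces $\bigcup\dot{\mathcal A}\cap n\neq y$ --- then the centeredness argument does produce a branch $X_i\subseteq\omega$, but it may well be finite or empty: in the example above, $y=\emptyset$ always survives at every $n$. Either way you cannot recover a good block sequence $\mathcal{A}_i$.) This is precisely the distinction between preserving splitting families of reals --- where $\dot x\cap n$ ranges over the finite set $\mathcal{P}(n)$ and the $\sigma$-centered argument of Lemma~\ref{main lemma epsilon families} suffices --- and preserving (block-)$\omega$-splitting families, where it does not.

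The paper's proof handles this with an entirely different device: a rank function $\mathrm{rk}_n^i(s)$ defined on stems $s\in\omega^{<\omega}$, measuring how many times one must extend the stem before the name $\dot{\mathcal A}$ becomes ``favorable'' at the relevant coordinate. One then builds, by transfinite recursion on the rank, a countable family of good block sequences indexed by stems $s$ (and pairs $(n,i)$), not by centered pieces, and the density argument descends through the ranks by repeatedly appealing to the splitting of these auxiliary sequences. You flag the K\"onig step as ``the step requiring the most care,'' which is the right instinct; but the missing realization is that this step cannot be made to work at all in the per-centered-piece framework, and a rank argument (or some equivalent fusion across stems) is genuinely needed.
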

\begin{proof}
Let $p\in \mathbb{D}$. There are $r \leq p $ and $f\in \omega^\omega$ a recursive function such that $r \Vdash |\dot{A}_n|=f(n)$. Since we are going to work below $r$, to simplify the notation we can assume
\[
\Vdash |\dot{A}_n|=f(n)
\]
For $s\in \omega^{<\omega}$, $i\in \omega$ and $0<n\leq f(i)$, say that \textit{$s$ favors that $k$ is the $n$-th element of $\dot{A}_i$} if there is no condition with first coordinate $s$ which forces that ``$k$ is not the $n$-th element of $\dot{A}_i$''. Define the \textit{rank} rk$_n^i(s)$ by recursion on the ordinals, as follows:
\begin{itemize}
    \item rk$_n^i(s)=0$ if for some $k \in \omega$, $s$ favors that $k$ is the $n$-th elements of $\dot{A}_i$.
    \item for $\alpha>0$: rk$_n^i(s)=\alpha$ if there is no $\beta< \alpha$ such that rk$_n^i(s)=\beta$ and there are infinitely many $l$ such that rk$_n^i(s^\frown l)<\alpha$.
\end{itemize}

Clearly, rk$_n^i(s)$ must be a countable ordinal or undefined (in which case we write rk$_n^i(s) = \infty$). We show the latter never happens.

\begin{claim}
rk$_n^i(s) < \omega_1$ for all $s\in \omega^{<\omega}$, $i \in \omega$ and $n<f(i)$.
\end{claim}
\begin{proof}
Assume rk$_n^i(s)=\infty$. Notice that for any $t$, if rk$_n^i(t) = \infty$, then rk$_n^i(t^\frown l) = \infty$ for almost all $l \in \omega$. This allows us to recursively construct a function $\phi : \omega^{<\omega} \rightarrow \omega$ such that whenever $t \supset s$ and $t(i) \geq \phi(t\upharpoonright i)$ for all $i \in |t|\setminus|s|$, then  rk$_n^i(t)=\infty$.

Consider the condition $(s, \phi)$. Find $(t, \psi) \leq (s, \phi)$ and $k \in \omega$ such that $(t, \psi)$ forces that $k$ is the $n$-th element of $\dot{A}_i$. Then rk$_n^i(t)=0$. However, by the preceding paragraph, rk$_n^i(t)=\infty$, a contradiction.   

\end{proof}

We continue the proof of the lemma. Consider the function $F(i)= \sum_{j< i} f(j)$. Let $s\in \omega^{<\omega}$. We make a recursive construction to get the block sequences we need. 

\begin{description}
\item[Case 0.] If $s$ is such that rk$_n^i(s)=0$ for infinitely many $(n,i)$, then it is easy to check that rk$_n^i(s)=0$ for all $(n,i)$. For all $m \in \omega$, we can find a finite sequence of finite sets $\mathcal{B}_m=(B_i^m : i<m )$ such that $s$ favors that $\mathcal{B}_m$ are the first $m$ blocks of $\dot{\mathcal{A}}$. Fix $i<\omega$. Choose one block of the collection $\{B_i^m\}_{m > i}$ such that its maximum element coincides with $\min\{ \max(B_i^m) \, | \, m \in \omega \}$, and denote this block as $B_i$. We denote the collection $(B_i : i< \omega)$ as $\mathcal{A}_s$. Note that $\mathcal{A}_s$ is a good block sequence.  

\item[Case 1.]  Let $(n,i) \in \omega \times \omega$ and $s\in \omega^{<\omega}$ such that rk$_n^i(s)=1$. There are infinitely many $l\in \omega$ such that rk$_n^i(s^\frown l)=0$. For each such $l$ we may find $A_l \in [\omega]^{F(i)+n}$ such that $s ^\frown l$ favors that $A_l$ are the first $(F(i)+n)$-th elements of $\bigcup \dot{\mathcal{A}}$. Define:
\[
i_s:= \max \{j \in \omega \,  | \,  \{a_j : a_j \text{ is the $j$-th element of some }A_l \} \text{ is finite } \}
\]
Then there is $A \in [\omega]^{i_s}$ such that the set $\{ l \, | \, A \text{ are the first $i_s$-th elements of } A_l \}$ is infinite. We denote this set as $A_{s,n}^i$. We can construct a block sequence $\mathcal{A}_{s,n}^i$ such that
\begin{itemize}
    \item $\mathcal{A}_{s,n}^i \subseteq [\omega]^{F(i)+n-i_s}$, so it is a good block sequence. Note that we must have $i_s < F(i)+n$ because rk$_n^i(s)>0$.
    \item For each $l\in \omega$ there is $l^\prime > l$ and $D \in \mathcal{A}_{s,n}^i$ such that $s ^\frown l^\prime$ favors that $A_{s,n}^i \cup D$ are the first $(F(i)+n)$-th elements of $\bigcup \dot{\mathcal{A}}$.
\end{itemize}
Note that $s$ favors that $A_{s,n}^i$ are the first $i_s$- elements of $\bigcup \dot{\mathcal{A}}$.

\item[Case $\alpha$.] Let $(n,i) \in \omega \times \omega$ and $s\in \omega^{<\omega}$ such that rk$_n^i(s)=\alpha>1$. There are infinitely many $l\in \omega$ such that $0<$ rk$_n^i(s^\frown l)<\alpha$.  For each $s^\frown l$ there exist a set $A_{s^\frown l,n}^i$ and a natural number $i_{s^\frown l}$ such that $s^\frown l$ favors that $A_{s^\frown l,n}^i$ are the first $i_{s^\frown l}$ elements of $\bigcup \dot{\mathcal{A}}$. Also, there is a block sequence $\mathcal{A}_{s^\frown l,n}^i$. Define:
\[
i_s:= \max \{j \in \omega \,  | \,  \{a_i : a_i \text{ is the $j$-th element of some }A_{s^\frown l,n}^i \} \text{ is finite } \}
\]
Then there is $A \in [\omega]^{i_s}$ such that the set $\{ l \, | \, A \text{ are the first $i_s$-th elements of } A_{s^\frown l,n}^i \}$ is infinite. We denote this set as $A_{s,n}^i$. Also, choose a number $m\leq F(i)+n$ such that the set $\{ l \, | \,  A_{s,n}^i \text{ are the first $i_s$-th elements of } A_{s^\frown l,n}^i \text{ and } m=|A_{s^\frown l,n}^i \setminus A_{s,n}^i| \}$ is infinite. We can construct a block sequence $\mathcal{A}_{s,n}^i$ such that
\begin{itemize}
    \item $\mathcal{A}_{s,n}^i \subseteq [\omega]^{m}$, so it is a good block sequence. Note that $m>0$ since rk$_n^i(s)>0$ and the maximality of $i_s$.
    \item For each $l\in \omega$ there is $l^\prime > l$ and $D \in \mathcal{A}_{s,n}^i$ such that $D = A_{s^\frown l^\prime,n}^i \setminus A_{s,n}^i$.
\end{itemize}
Note that $s$ favors that $A_{s,n}^i$ are the first $i_s$- elements of $\bigcup \dot{\mathcal{A}}$.
   
\end{description}

We claim that if $B$ splits all $\mathcal{A}_s$ and all $\mathcal{A}_{s,n}^i$, then $\Vdash B  \text{ splits } \dot{\mathcal{A}}$. Let $(s, \phi)$ be a condition and let $m\in \omega$. We need to find $(t,\psi) \leq (s, \phi)$ and $m_1,m_2\geq m$ such that $(t,\psi)$ forces that $\dot{A}_{m_1} \subseteq B$ and $\dot{A}_{m_2} \cap B = \emptyset$. Since the construction of $m_1$ and $m_2$ is analogous, it suffices to produce the former.
\begin{description}
    \item[Case 0.] There are infinitely many $(n,i)$ such that rk$_n^i(s)=0$. Since $B$ splits $\mathcal{A}_s$, there is $n>m$ such that $B_n \in \mathcal{A}_s $ and $B_n \subseteq B$. By definition of $\mathcal{A}_s$, $s$ favors that $B_n$ is the $n$-th block of $\dot{\mathcal{A}}$. Therefore, there is $(t,\psi) \leq (s, \phi) $ such that $(t,\psi) \Vdash \dot{A}_n = B_n \subseteq B$.

\end{description}    

Now, suppose that rk$_n^i(s)>0$ for all but finitely many $(n,i)$. Choose $i\geq m$ such that rk$_n^i(s)>0$ for all $0<n\leq f(i)$. There are two cases.

\begin{description}

    \item[Case rk$_{f(i)}^i(s) =1 $.]  Since $B$ splits $\mathcal{A}_{s,f(i)}^i$, we can find $l\geq \phi(s)$ and $D\in \mathcal{A}_{s,f(i)}^i$ such that
    \begin{itemize}
        \item $D \subseteq B$ and $s ^\frown l$ favors that $A_{s,f(i)}^i \cup D$ are the first $(F(i)+f(i))$-th elements of $\bigcup \dot{\mathcal{A}}$.
    \end{itemize}
    Note that $|A_{s,f(i)}^i|=i_s \leq F(i)$ because rk$_n^i(s)>0$ for all $0<n\leq f(i)$, thus $s ^\frown l$ favors that $D$ contains the $i$-th block of $\dot{\mathcal{A}}$. Therefore, there is $(r,\psi) \leq (s, \phi) $ such that 
   \[
   (r,\psi) \Vdash \dot{A}_i \subseteq D \subseteq B
   \]

    \item[Case rk$_{f(i)}^i(s) = \alpha > 1  $.]  Since $B$ splits $\mathcal{A}_{s,f(i)}^i$, we can find $l_1\geq \phi(s)$ and $D_1\in \mathcal{A}_{s,f(i)}^i$ such that
    \begin{itemize}
        \item $1\leq $ rk$_{f(i)}^i(s^\frown l_1) <$ rk$_{f(i)}^i(s)$
        \item $D_1 \subseteq B$ and $D_1 = A_{s^\frown l_1,f(i)}^i \setminus A_{s,f(i)}^i$.
    \end{itemize}
    Repeating this argument, we can extend $s$ to $s^\frown l_1 ^{\frown ... \frown} l_k=t$ and find finite sets $D_1, ..., D_k$ such that
    \begin{itemize}
        \item $t(i)\geq \phi(t\upharpoonright i)$ for all $i \in |t| \setminus |s|$, and rk$_{f(i)}^i(t)=1$.
        \item $D_j \subseteq B$ and $D_j = A_{s^\frown l_1 ^{\frown ... \frown} l_j,f(i)}^i \setminus A_{l_1 ^{\frown ... \frown} l_{j-1},f(i)}^i$
    \end{itemize}
    Finally, since rk$_{f(i)}^i(t)=1$ and $B$ splits $\mathcal{A}_{t,f(i)}^i$, we can find $l\geq \phi(t)$ and $D\in \mathcal{A}_{t,f(i)}^i$ such that
    \begin{itemize}
        \item $D \subseteq B$ and $t ^\frown l$ favors that $A_{t,f(i)}^i \cup D$ are the first $(F(i)+f(i))$-th elements of $\bigcup \dot{\mathcal{A}}$.
    \end{itemize}
   Note that $|A_{s,f(i)}^i|=i_s \leq F(i)$ because rk$_n^i(s)>0$ for all $0<n\leq f(i)$. Also, by construction, $A_{t,f(i)}^i = D_1 \cup ... \cup D_k \cup A_{s,f(i)}^i $, thus $t ^\frown l$ favors that $D_1 \cup ... \cup D_k \cup D$ contains the $i$-th block of $\dot{\mathcal{A}}$. Therefore, there is $(r,\psi) \leq (s, \phi) $ such that 
   \[
   (r,\psi) \Vdash \dot{A}_i \subseteq D_1 \cup ... \cup D_k \cup D \subseteq B
   \]

\end{description}

\end{proof}

Let us say a partial order has property $(\star \star)$ if it shares the property exhibited in \ref{preservation of cofinally spliting families}.  This property is preserved under fsi of ccc forcing notions. 

\begin{lemma}\label{iteration lemma of preservation of cofinally spliting families}
Let $\delta$ be a limit ordinal. Let $(\mathbb{P}_\alpha, \dot{\mathbb{Q}}_{\alpha} : \alpha < \delta)$ be an fsi of ccc forcing notions. Assume that $\mathbb{P}_\alpha$ has property $(\star\star)$ for all $\alpha<\delta$. Then also $\mathbb{P}_\delta$ has property $(\star\star)$.
\end{lemma}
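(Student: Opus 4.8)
\emph{Proof proposal.} The plan is to follow the template of Lemma~\ref{iteration lemma of epsilon families} closely, replacing the name for an element of $[\omega]^\omega$ by a name for a good block sequence and the bisecting clause by the splitting clause. First, if $\mathrm{cf}(\delta)>\omega$ then no new reals are added at stage $\delta$, so the statement holds vacuously; hence I assume $\mathrm{cf}(\delta)=\omega$ and, to ease notation, $\delta=\omega$. Let $\dot{\mathcal{A}}=(\dot{A}_n:n\in\omega)$ be a $\mathbb{P}_\omega$-name for a good block sequence. Using ccc I first reduce to the case where a single recursive $f$ is forced, $\Vdash|\dot A_n|=f(n)$: there is a countable maximal antichain deciding the recursive witness of goodness, one treats each of its elements separately and unions the resulting countable families at the end.

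Next, for each $m\in\omega$ I work in $V^{\mathbb{P}_m}$ and choose inside the remainder forcing $\mathbb{P}_\omega\setminus\mathbb{P}_m$ a decreasing sequence of conditions $p_{m,k}$ such that $p_{m,k}$ decides the first $k$ blocks of $\dot{\mathcal{A}}$; write $p_{m,k}\Vdash$ ``$B^m_j$ is the $j$-th block of $\dot{\mathcal{A}}$'' for $j<k$. Since $|B^m_j|=f(j)$, the sequence $\mathcal{A}^m=(B^m_k:k\in\omega)$ is a good block sequence in $V^{\mathbb{P}_m}$, and back in $V$ it has a $\mathbb{P}_m$-name $\dot{\mathcal{A}}_m$. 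Because $\mathbb{P}_m$ has property~$(\star\star)$, applying it to $\dot{\mathcal{A}}_m$ produces good block sequences $\mathcal{A}_{m,i}=(A^{m,i}_j:j\in\omega)$, $i\in\omega$, in $V$, such that any $B\in[\omega]^\omega$ splitting all of them satisfies $\Vdash_{\mathbb{P}_m}$ ``$B$ splits $\dot{\mathcal{A}}_m$''. The family witnessing property~$(\star\star)$ for $\mathbb{P}_\omega$ and $\dot{\mathcal{A}}$ will be $\{\mathcal{A}_{m,i}:m,i\in\omega\}$, a countable collection of good block sequences.

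It then remains to check that if $B$ splits every $\mathcal{A}_{m,i}$ then $\Vdash_{\mathbb{P}_\omega}$ ``$B$ splits $\dot{\mathcal{A}}$''. By symmetry it suffices to show that for all $p\in\mathbb{P}_\omega$ and all $N$ there are $q\le p$ and $k>N$ with $q\Vdash\dot A_k\subseteq B$, the clause $\dot A_k\cap B=\emptyset$ being handled identically. Choose $m$ with $p\in\mathbb{P}_m$ and pass to $V^{\mathbb{P}_m}$; there $B$ still splits every $\mathcal{A}_{m,i}$, since for a concrete block sequence ``$B$ splits $\mathcal{A}_{m,i}$'' is arithmetic in the reals and hence absolute. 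So property~$(\star\star)$ of $\mathbb{P}_m$ gives that $B$ splits $\mathcal{A}^m$, whence there is $k>N$ with $B^m_k\subseteq B$, and then the remainder condition $p_{m,k+1}$ forces $\dot A_k=B^m_k\subseteq B$. Now return to $V$ and argue exactly as in Lemma~\ref{iteration lemma of epsilon families}: strengthen $p$ inside $\mathbb{P}_m$ to decide $k$ and to decide the remainder condition $p_{m,k+1}$, and set $q=p^\frown p_{m,k+1}\le p$; then $q\Vdash\dot A_k\subseteq B$ with $k>N$.

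I expect this argument to go through routinely; the only point requiring care is the passage among $V$, $V^{\mathbb{P}_m}$ and $V^{\mathbb{P}_\omega}$ --- namely that ``$\mathcal{A}_{m,i}$ is a good block sequence'' and ``$B$ splits $\mathcal{A}_{m,i}$'' are absolute (goodness being witnessed by a recursive function), and that the decreasing sequence $(p_{m,k})$ in the remainder forcing is chosen coherently enough that the $B^m_k$ genuinely form a block sequence with $|B^m_k|=f(k)$. These are precisely the subtleties already handled in the proof of Lemma~\ref{iteration lemma of epsilon families}.
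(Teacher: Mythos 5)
Your proof is correct and follows essentially the same route as the paper's: reduce to $\delta=\omega$, pick decreasing remainder conditions $p_{m,k}$ in $V^{\mathbb{P}_m}$ deciding the blocks to form $\mathcal{A}_m$, apply $(\star\star)$ of $\mathbb{P}_m$ to get the $\mathcal{A}_{m,i}$, and verify the union works by the standard density argument. The only genuine addition is your explicit ccc/antichain step to reduce to a single forced recursive $f$; the paper handles this more cursorily inside the single-step Lemma~\ref{preservation of cofinally spliting families} (working below a condition $r$ that decides $f$) and implicitly carries it over, so your version is a bit more complete on that point.
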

\begin{proof}
No new reals arise in limit stages of uncountable cofinality so that the lemma vacuously holds if cf$(\delta)>\omega$. Hence assume cf$(\delta)=\omega$. To simplify notation suppose $\delta=\omega$.

Let $\dot{\mathcal{A}}=(\dot{A}_n : n \in \omega )$ be a $\mathbb{P}_\omega$-name of a good block sequence. Fix $m \in \omega$. Work in $V^{\mathbb{P}_m}$ for the moment. There is a decreasing sequence of conditions $p_k=p_{m,k}$ in the remainder forcing $\mathbb{P}_\omega \setminus \mathbb{P}_m$ such that $p_k$ decides the $k$-th block of $\dot{\mathcal{A}}$. Say
$p_k \Vdash B_k= \dot{A}_k$. Let $\mathcal{A}_m=(B_n: n \in \omega )$. 

Work in the ground model $V$. We have a $\mathbb{P}_m$-name $\dot{\mathcal{A}}_m$ for $\mathcal{A}_m$. Since $\mathbb{P}_m$ has property $(\star\star)$, there are good block sequences $\mathcal{A}_{m,i}$, $i\in \omega$, such that whenever $B \in [\omega]^\omega$ splits all $\mathcal{A}_{m,i}$, then $\Vdash B  \text{ splits } \dot{\mathcal{A}}_m$.

Now, fix $B\in [\omega]^\omega$. We claim that if $B \in [\omega]^\omega$ splits all $\mathcal{A}_{m,i}$ for all $i,m\in \omega$, then $\Vdash B  \text{ splits } \dot{\mathcal{A}}$.

To see this, let $p \in \mathbb{P}_\omega$ and $n\in \omega$. We need to find $q \leq p$ and $m_1,m_2\geq n$ such that $q$ forces that $\dot{A}_{m_1} \subseteq B$ and $\dot{A}_{m_2} \cap B = \emptyset$. Since the construction of $m_1$ and $m_2$ is analogous, it suffices to produce the former. Fix $m$ such that $p \in \mathbb{P}_m$ and work in $V^{\mathbb{P}_m}$. We know that $B$ splits $\mathcal{A}_m$. Thus there is $k > n$ such that $B_k\subseteq B$ and
\[
p_{m,k} \Vdash  B_k= \dot{A}_k
\]
Now work in the ground model $V$. By strengthening $p\in \mathbb{P}_n$, if necessary, we may assume $p$ decides $p_k$ and $B_k$. Then 
\[
q= p^\frown p_{m,k} \Vdash \dot{A}_k \subseteq B
\]
    
\end{proof}

\begin{cor}\label{values in hechler model 2}
    $V^{\mathbb{D}_\kappa}\models \aleph_1 =\mathfrak{s}_{1/2}^{\infty} =\mathfrak{fs}_{g}  < \mathfrak{fs} = \kappa$ where $\mathbb{D}_\kappa$ is a finite support iteration of Hechler forcing of length $\kappa > \aleph_1$.
\end{cor}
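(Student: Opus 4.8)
The plan is to split the statement into the two extreme equalities $\mathfrak{fs}_{g}=\aleph_1$ and $\mathfrak{fs}=\kappa$ in $V^{\mathbb{D}_\kappa}$, and then let Corollary \ref{inequalities fs and fr} take care of $\mathfrak{s}_{1/2}^{\infty}$ by sandwiching: once $\mathfrak{fs}_{g}=\aleph_1$ is known, $\aleph_1\le\mathfrak{s}_{1/2}^{\infty}\le\mathfrak{fs}_{g}=\aleph_1$ forces $\mathfrak{s}_{1/2}^{\infty}=\aleph_1$. Throughout I work over a ground model $V$ of CH with $\kappa$ regular, so that $V^{\mathbb{D}_\kappa}\models 2^{\aleph_0}=\kappa$ and $V\cap[\omega]^\omega$ has size $\aleph_1$ in $V^{\mathbb{D}_\kappa}$.

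For $\mathfrak{fs}=\kappa$: the finite support iteration of Hechler forcing of length $\kappa$ adds a dominating real at every stage, and since $\operatorname{cf}(\kappa)>\omega$ each family of fewer than $\kappa$ reals in $V^{\mathbb{D}_\kappa}$ already appears at a proper initial stage and is dominated by the next Hechler real; hence $V^{\mathbb{D}_\kappa}\models\mathfrak{b}=\kappa$. By the theorem of Kamburelis and W\k{e}glorz, $\mathfrak{fs}=\max\{\mathfrak{b},\mathfrak{s}\}\ge\mathfrak{b}=\kappa$, while trivially $\mathfrak{fs}\le 2^{\aleph_0}=\kappa$; so $\mathfrak{fs}=\kappa$.

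For $\mathfrak{fs}_{g}=\aleph_1$: the bound $\mathfrak{fs}_{g}\ge\aleph_1$ is clear, since good block sequences with unit-length blocks already show $\mathfrak{fs}_{g}\ge\mathfrak{s}$. For the reverse inequality I would show that $V\cap[\omega]^\omega$ still splits every good block sequence of $V^{\mathbb{D}_\kappa}$. By Lemma \ref{preservation of cofinally spliting families} single Hechler forcing has property $(\star\star)$, and by Lemma \ref{iteration lemma of preservation of cofinally spliting families} this passes to the whole iteration, so $\mathbb{D}_\kappa$ has property $(\star\star)$. Now let $\mathcal{A}$ be a good block sequence in $V^{\mathbb{D}_\kappa}$, choose $p$ in the generic filter and a $\mathbb{D}_\kappa$-name $\dot{\mathcal{A}}$ for $\mathcal{A}$ with $p\Vdash$ ``$\dot{\mathcal{A}}$ is a good block sequence''. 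Applying property $(\star\star)$ below $p$ yields good block sequences $\mathcal{A}_i\in V$ $(i\in\omega)$ such that any $B$ splitting all $\mathcal{A}_i$ is forced, below $p$, to split $\dot{\mathcal{A}}$. The one remaining point is to produce a \emph{single} $B\in V\cap[\omega]^\omega$ splitting all the $\mathcal{A}_i$ at once; this is a straightforward recursive construction inside $V$: list the countably many requirements ``some block of $\mathcal{A}_i$ is contained in $B$'' and ``some block of $\mathcal{A}_i$ is disjoint from $B$'' and meet them one at a time, each time working above the finite part of $B$ decided so far. Then $p$ forces that $B$ splits $\dot{\mathcal{A}}$, so this ground-model $B$ splits $\mathcal{A}$ in $V^{\mathbb{D}_\kappa}$. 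Since $V\cap[\omega]^\omega$ has size $\aleph_1$ there, $\mathfrak{fs}_{g}\le\aleph_1$.

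Combining the two equalities with Corollary \ref{inequalities fs and fr} gives $\aleph_1=\mathfrak{s}_{1/2}^{\infty}=\mathfrak{fs}_{g}<\mathfrak{fs}=\kappa$, as wanted. The real forcing-theoretic content already sits in Lemma \ref{preservation of cofinally spliting families}; the points demanding a little care here are that property $(\star\star)$ genuinely survives to $\mathbb{D}_\kappa$ (this is Lemma \ref{iteration lemma of preservation of cofinally spliting families}) and the elementary fact that one ground-model set can split countably many ground-model block sequences simultaneously — the latter being the (mild) main obstacle to be dispatched.
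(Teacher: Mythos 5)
Your proof is correct, and its skeleton (reduce the whole corollary to $\mathfrak{fs}_g=\aleph_1$ and $\mathfrak{fs}=\kappa$, get $\mathfrak{fs}$ from Kamburelis–W\k{e}glorz plus $\mathfrak{b}=\kappa$, and sandwich $\mathfrak{s}_{1/2}^\infty$ via Corollary \ref{inequalities fs and fr}) is the same as the paper's. The difference lies in how the two arguments produce a size-$\aleph_1$ family splitting all good block sequences of $V^{\mathbb{D}_\kappa}$. The paper takes for this role the Cohen reals $c_\alpha$, $\alpha<\omega_1$, coded by the first $\aleph_1$ Hechler generics: any countable family of good block sequences in $V^{\mathbb{D}_{\aleph_1}}$ already lives in some $V^{\mathbb{D}_\alpha}$ with $\alpha<\omega_1$ (ccc plus finite supports), hence is split outright by the single Cohen real $c_\alpha$ at that stage; property $(\star\star)$ is then applied to the remainder $\mathbb{D}_\kappa$ over the intermediate model $V^{\mathbb{D}_{\aleph_1}}$, and the ``single $B$'' splitting the countably many auxiliary block sequences is just a suitable $c_\alpha$, handed over by Cohen genericity rather than by hand. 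You instead apply $(\star\star)$ over $V$ itself, take $V\cap[\omega]^\omega$ as the splitting family, and construct the ``single $B$'' by an explicit dovetailing in the ground model — a correct and self-contained argument, but one that visibly leans on CH in $V$ to bound $|V\cap[\omega]^\omega|$ by $\aleph_1$. That assumption is in force throughout the paper anyway (it is needed for $2^{\aleph_0}=\kappa$ in $V^{\mathbb{D}_\kappa}$), so nothing is lost; the paper's use of the iteration's own Cohen reals is a modest stylistic gain, since it lets the splitting family be generated by the forcing rather than imported from the ground model. Both proofs also rely tacitly on the successor-step preservation of $(\star\star)$ (not just Lemma \ref{iteration lemma of preservation of cofinally spliting families}, which handles limits), but this is shared with the paper, not a defect peculiar to your write-up.
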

\begin{proof}
    Let $c_\alpha$, $\alpha< \omega_1$ denote the Cohen reals added by the first $\aleph_1$ many Hechler reals. It is easy to see that every countable family of good block sequences in $V^{\mathbb{D}_{\aleph_1}}$ is split by some $c_\alpha$. By the previous lemmas $\mathbb{D}_\kappa$ has property $(\star\star)$, so every good block sequence in $V^{\mathbb{D}_{\kappa}}$ is split by $\{c_\alpha\}_{\alpha<\omega_1}$. Therefore, by \ref{inequalities fs and fr}, we get $V^{\mathbb{D}_\kappa}\models \aleph_1 = \mathfrak{fs}_{g} =\mathfrak{s}_{1/2}^{\infty}$. Besides, since $\mathfrak{fs} = \max\{ \mathfrak{b}, \mathfrak{s} \}$, we have $V^{\mathbb{D}_\kappa}\models \mathfrak{b}=\mathfrak{fs} = \kappa$.
    
\end{proof}

To prove that we can obtain models where $\mathfrak{s}_{1/2}^{\infty} <$ non$(\mathcal{E})$ and cov$(\mathcal{E}) < \mathfrak{r}_{1/2}^{\infty}$ we need the following theorem.

\begin{theorem}\label{Equalities with the ideal E}
 \begin{enumerate}
    \item  If cov$(\mathcal{M}) = \mathfrak{d}$, then cov$(\mathcal{E}) = \max\{$cov$(\mathcal{M})$, cov$(\mathcal{N}) \}$. 
    \item  If non$(\mathcal{M})=\mathfrak{b}$, then non$(\mathcal{E})=\min\{$non$(\mathcal{M})$, non$(\mathcal{N}) \}$.
\end{enumerate}   
\end{theorem}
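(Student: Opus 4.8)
The plan is to reduce the two statements to known combinatorial characterizations of $\mathcal{E}$ and its relatives. Recall the general fact (Bartoszyński–Shelah) that $\mathcal{E}$ sits between $\mathcal{M}\cap\mathcal{N}$ and $\mathcal{M}$, $\mathcal{N}$; in particular without any hypothesis one always has $\max\{\mathrm{cov}(\mathcal{M}),\mathrm{cov}(\mathcal{N})\}\le\mathrm{cov}(\mathcal{E})$ and $\mathrm{non}(\mathcal{E})\le\min\{\mathrm{non}(\mathcal{M}),\mathrm{non}(\mathcal{N})\}$. So in both items only one inequality has to be proved, and the hypothesis ($\mathrm{cov}(\mathcal{M})=\mathfrak{d}$ for (1), $\mathrm{non}(\mathcal{M})=\mathfrak{b}$ for (2)) must be used precisely there. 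The two items are dual, so I would prove (1) in detail and then indicate that (2) follows by the same argument read in the dual category (slaloms/chopped reals versus their duals), or by an explicit dualization of each step.

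For (1): I want to show $\mathrm{cov}(\mathcal{E})\le\max\{\mathrm{cov}(\mathcal{M}),\mathrm{cov}(\mathcal{N})\}$ under $\mathrm{cov}(\mathcal{M})=\mathfrak{d}$. Use the Bartoszyński-style characterization: a set is in $\mathcal{E}$ iff it is contained in a set of the form $\{x : \exists^\infty n\ x\upharpoonright I_n \in S_n\}$ where $(I_n)$ is an interval partition and $S_n\subseteq 2^{I_n}$ with $\sum_n |S_n|/2^{|I_n|}<\infty$ — i.e. "small" in the product-measure sense along a fixed interval partition (this is the standard presentation of the $\sigma$-ideal generated by closed measure-zero sets; cf. Bartoszyński–Judah). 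Then a family $\mathcal F$ of such "$\mathcal{E}$-small" sets covers $2^\omega$ iff for every $x$ there is a member catching $x$ cofinally often on its blocks. I would take a witnessing family $\mathcal{C}_{\mathcal N}$ of size $\mathrm{cov}(\mathcal N)$ of null sets and a witnessing family of size $\mathrm{cov}(\mathcal M)=\mathfrak d$ of meager sets, and from these manufacture $\mathcal{E}$-sets: the meager family contributes control of the interval partitions (this is where $\mathfrak d$, hence $\mathrm{cov}(\mathcal M)=\mathfrak d$, enters — a dominating family of size $\mathfrak d$ lets us fix, for each relevant "rate" of the null sets, a single interval partition growing fast enough), and the null family contributes the measure-smallness of the $S_n$'s. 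Concretely: enumerate a dominating family $\{g_\xi:\xi<\mathfrak d\}$ and a covering family of null sets; for each pair build one closed measure-zero set by regrouping a null set along blocks determined by some $g_\xi$, and argue every real lands in one of these $\max\{\mathrm{cov}(\mathcal M),\mathrm{cov}(\mathcal N)\}$-many sets.

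The main obstacle — and the step I would spend the most care on — is exactly this \emph{amalgamation}: turning a null set (whose "bad block" structure uses an interval partition depending on the real) into a genuine member of $\mathcal{E}$ (closed, hence requiring the block structure to be fixed in advance). Without a hypothesis this costs a $\mathfrak d$'s worth of interval partitions \emph{per null set}, giving only $\mathrm{cov}(\mathcal N)\cdot\mathfrak d$; the point of assuming $\mathrm{cov}(\mathcal M)=\mathfrak d$ is that $\mathfrak d\le\mathrm{cov}(\mathcal M)\le\max\{\mathrm{cov}(\mathcal M),\mathrm{cov}(\mathcal N)\}$, so this product collapses to the desired max. One has to be careful that the $\sum_n|S_n|/2^{|I_n|}<\infty$ condition survives the regrouping — choosing the partition from the dominating family to grow fast enough relative to the tail-sums of the original null set handles this, and is the computation I would not carry out in full here but flag as routine. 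For (2) the dual obstacle is producing, from a non-meager set of size $\mathrm{non}(\mathcal M)=\mathfrak b$ and a non-null set of size $\mathrm{non}(\mathcal N)$, a single set of size $\min$ of the two that escapes every $\mathcal{E}$-set; here the bounding family of size $\mathfrak b$ supplies a real not dominated by the "block-rate" of any given $\mathcal{E}$-set, and the non-null set supplies a pattern avoiding the $S_n$'s on cofinitely many blocks — again the content is that $\mathfrak b=\mathrm{non}(\mathcal M)$ makes a $\min$ out of what would otherwise be a $\mathrm{non}(\mathcal N)$-sized union of $\mathfrak b$-sized pieces.
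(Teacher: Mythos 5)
The paper does not prove this theorem; its ``proof'' is the single line ``See [Theorem 2.6.9] of Bartoszy\'nski--Judah, \emph{Set Theory: On the Structure of the Real Line}.'' So there is no internal argument to compare against, and what you have written is an attempt at the cited result itself. Your overall strategy --- note that $\mathcal{E}\subseteq\mathcal{M}\cap\mathcal{N}$ gives one inequality for free, then prove the other via a combinatorial description of $\mathcal{E}$, using a dominating family of size $\mathfrak{d}$ to pin down interval partitions and a $\mathrm{cov}(\mathcal{N})$-sized family to supply the measure-smallness along those partitions, with the hypothesis $\mathrm{cov}(\mathcal{M})=\mathfrak{d}$ serving exactly to turn the resulting $\max\{\mathfrak{d},\mathrm{cov}(\mathcal{N})\}$ bound into $\max\{\mathrm{cov}(\mathcal{M}),\mathrm{cov}(\mathcal{N})\}$ --- is faithful to the argument in the literature.

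However, the combinatorial characterization of $\mathcal{E}$ you invoke has the wrong quantifier, and this is not a cosmetic slip. You write that $A\in\mathcal{E}$ iff $A\subseteq\{x:\exists^\infty n\ x\upharpoonright I_n\in S_n\}$ for some interval partition $(I_n)$ and $S_n\subseteq 2^{I_n}$ with $\sum_n|S_n|/2^{|I_n|}<\infty$. That is the standard characterization of the \emph{null} ideal $\mathcal{N}$, not of $\mathcal{E}$: the displayed set is $G_\delta$ and null by Borel--Cantelli, and a $G_\delta$ null set is in general not contained in any $F_\sigma$ null set (a dense such set is comeager, hence not even meager, while $\mathcal{E}\subseteq\mathcal{M}$). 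The characterization of $\mathcal{E}$ uses $\forall^\infty$: $A\in\mathcal{E}$ iff there is an interval partition $(I_n)$ and $J_n\subseteq 2^{I_n}$ with $|J_n|/2^{|I_n|}\to 0$ such that $A\subseteq\{x:\forall^\infty n\ x\upharpoonright I_n\in J_n\}$; each $\bigcap_{n\ge N}\{x:x\upharpoonright I_n\in J_n\}$ is closed, so the union over $N$ is an $F_\sigma$ null set. Switching from $\exists^\infty$ to $\forall^\infty$ changes the combinatorics of the covering argument substantively: for item (1) you must catch every $x$ on all but finitely many blocks, not just infinitely often, and dually for item (2) the non-$\mathcal{E}$ witness must escape each $\mathcal{E}$-set on infinitely many blocks. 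The amalgamation step you flag as ``routine'' therefore needs to be redone against the correct characterization before the sketch becomes a proof; the conceptual skeleton (dominating family for interval partitions, null-covering family for block content, hypothesis collapsing the bound) survives, but the bookkeeping does not carry over verbatim.
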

\begin{proof}
    See \cite[Theorem 2.6.9]{Bartoszynski1995SetTO} 
\end{proof}

\begin{cor}
    Con$( \mathfrak{s}_{1/2}^{\infty} <$ non$(\mathcal{E}) ) $
\end{cor}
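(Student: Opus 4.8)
The plan is to produce a model where $\mathfrak{s}_{1/2}^{\infty} = \aleph_1$ while $\mathrm{non}(\mathcal{E}) > \aleph_1$, and by Corollary~\ref{values in hechler model 2} the natural candidate is the Hechler model $V^{\mathbb{D}_\kappa}$ for $\kappa > \aleph_1$, where we already know $\mathfrak{s}_{1/2}^{\infty} = \aleph_1$. So it remains only to compute $\mathrm{non}(\mathcal{E})$ in this model and show it is bigger than $\aleph_1$. First I would recall that the Hechler model is the standard model for $\mathrm{cov}(\mathcal{M}) = \mathfrak{d} = \mathrm{non}(\mathcal{M}) = \mathfrak{b} = \mathfrak{c} = \kappa$; indeed finite support iteration of Hechler forcing of length $\kappa$ (with $\kappa$ regular) makes all of $\mathrm{add}(\mathcal{M}) = \mathrm{cof}(\mathcal{M}) = \kappa$ (Hechler reals add dominating reals, and the Cohen reals appearing cofinally give $\mathrm{cov}(\mathcal{M}) = \kappa$, while the dominating reals give $\mathfrak{b} = \kappa$, whence $\mathrm{non}(\mathcal{M}) = \mathfrak{b} = \kappa$ and $\mathrm{cov}(\mathcal{M}) = \mathfrak{d} = \kappa$). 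Since Hechler forcing is $\sigma$-centered it preserves outer measure, so no random reals are added over intermediate models and $\mathrm{cov}(\mathcal{N})$ stays $\aleph_1$; but for $\mathrm{non}(\mathcal{E})$ we only need the lower bound.

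The key step is to invoke Theorem~\ref{Equalities with the ideal E}(2): since $\mathrm{non}(\mathcal{M}) = \mathfrak{b} = \kappa$ holds in $V^{\mathbb{D}_\kappa}$, we get $\mathrm{non}(\mathcal{E}) = \min\{\mathrm{non}(\mathcal{M}), \mathrm{non}(\mathcal{N})\}$. Now $\mathrm{non}(\mathcal{N}) \geq \aleph_1$ trivially, and in fact $\mathrm{non}(\mathcal{N}) = \kappa$ because $\mathrm{non}(\mathcal{N}) \geq \mathrm{non}(\mathcal{M}) \cdot$-style inequalities are not available, but we do have $\mathrm{non}(\mathcal{N}) \geq \mathfrak{b}$ is false in general — instead one uses that adding $\kappa$ Cohen reals cofinally (which the Hechler iteration does) forces $\mathrm{non}(\mathcal{N}) = \mathfrak{c} = \kappa$ since Cohen forcing makes old reals null, hence no set of size $< \kappa$ can be non-null. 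Actually the cleanest route: Cohen reals are added cofinally in the iteration, so by the usual argument any set of reals of size $< \kappa$ lies in the ground model (or an intermediate model) of some initial segment and is made null (resp.\ meager) by a later Cohen real; hence $\mathrm{non}(\mathcal{N}) = \mathrm{non}(\mathcal{M}) = \kappa$. Therefore $\mathrm{non}(\mathcal{E}) = \min\{\kappa, \kappa\} = \kappa > \aleph_1 = \mathfrak{s}_{1/2}^{\infty}$, giving the consistency.

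The main obstacle — really the only point requiring care — is verifying that the hypothesis $\mathrm{non}(\mathcal{M}) = \mathfrak{b}$ of Theorem~\ref{Equalities with the ideal E} holds in the Hechler model, and that $\mathrm{non}(\mathcal{N}) = \kappa$ there; both are classical facts about the Hechler model (see Bartoszy\'nski--Judah), so I would simply cite them rather than reprove them. One should double-check that the length $\kappa$ is taken regular (or at least of uncountable cofinality) so that the cofinal-Cohen-reals argument applies and $\mathfrak{c} = \kappa$; this is exactly the setup already used in Corollary~\ref{values in hechler model 2}, so no new work is needed. I would write the proof as: take $\mathbb{D}_\kappa$ as in \ref{values in hechler model 2}; then $V^{\mathbb{D}_\kappa} \models \mathfrak{s}_{1/2}^{\infty} = \aleph_1$ by that corollary; it is well known that $V^{\mathbb{D}_\kappa} \models \mathrm{non}(\mathcal{M}) = \mathfrak{b} = \kappa$ and $\mathrm{non}(\mathcal{N}) = \kappa$; so by Theorem~\ref{Equalities with the ideal E}(2), $\mathrm{non}(\mathcal{E}) = \min\{\kappa,\kappa\} = \kappa > \aleph_1$, as desired.

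\begin{proof}
Let $\mathbb{D}_\kappa$ be a finite support iteration of Hechler forcing of length $\kappa$, where $\kappa > \aleph_1$ is regular, as in Corollary~\ref{values in hechler model 2}. By that corollary, $V^{\mathbb{D}_\kappa} \models \mathfrak{s}_{1/2}^{\infty} = \aleph_1$, and moreover $V^{\mathbb{D}_\kappa}\models \mathfrak{b} = \kappa$.

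It is a classical fact about the Hechler model that $V^{\mathbb{D}_\kappa} \models \mathrm{non}(\mathcal{M}) = \mathfrak{b} = \kappa$: the Hechler reals added at each step are dominating, so $\mathfrak{b} = \mathfrak{d} = \kappa$, and since $\mathfrak{b} \leq \mathrm{non}(\mathcal{M}) \leq \mathfrak{c} = \kappa$ we get $\mathrm{non}(\mathcal{M}) = \kappa$. Likewise $\mathrm{non}(\mathcal{N}) = \kappa$: the iteration adds Cohen reals cofinally (each Hechler real adds a Cohen real), and a Cohen real over a model $W$ makes the set $W \cap 2^\omega$ null; hence any set of reals of size $< \kappa$, living in some intermediate extension $V^{\mathbb{D}_\alpha}$ with $\alpha < \kappa$, is null in $V^{\mathbb{D}_{\alpha+1}}$ and therefore in $V^{\mathbb{D}_\kappa}$, so $\mathrm{non}(\mathcal{N}) = \kappa$.

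Since $V^{\mathbb{D}_\kappa} \models \mathrm{non}(\mathcal{M}) = \mathfrak{b}$, Theorem~\ref{Equalities with the ideal E}(2) applies and gives
\[
V^{\mathbb{D}_\kappa} \models \mathrm{non}(\mathcal{E}) = \min\{\mathrm{non}(\mathcal{M}), \mathrm{non}(\mathcal{N})\} = \min\{\kappa, \kappa\} = \kappa.
\]
Therefore $V^{\mathbb{D}_\kappa} \models \mathfrak{s}_{1/2}^{\infty} = \aleph_1 < \kappa = \mathrm{non}(\mathcal{E})$, which proves Con$(\mathfrak{s}_{1/2}^{\infty} < \mathrm{non}(\mathcal{E}))$.
\end{proof}
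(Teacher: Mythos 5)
Your proof is correct and follows essentially the same route as the paper: force with the Hechler iteration $\mathbb{D}_\kappa$, use Corollary~\ref{values in hechler model 2} to get $\mathfrak{s}_{1/2}^{\infty}=\aleph_1$, invoke the standard facts that $\mathrm{non}(\mathcal{M})=\mathfrak{b}=\mathrm{non}(\mathcal{N})=\kappa$ in this model, and then apply Theorem~\ref{Equalities with the ideal E}(2) to conclude $\mathrm{non}(\mathcal{E})=\kappa$. You simply spell out the classical facts about the Hechler model in more detail than the paper does.
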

\begin{proof}
Let $\kappa>\aleph_1$. By corollary \ref{values in hechler model 2}, $V^{\mathbb{D}_\kappa}\models\mathfrak{s}_{1/2}^{\infty}= \aleph_1$. Furthermore, it is known that $V^{\mathbb{D}_\kappa}\models$ non$(\mathcal{M}) = \mathfrak{b} =$ non$(\mathcal{N}) = \kappa$, then $V^{\mathbb{D}_\kappa}\models$ non$(\mathcal{E})= \kappa$ by theorem \ref{Equalities with the ideal E}.  

\end{proof}

\begin{cor}
  Con$($cov$(\mathcal{E}) < \mathfrak{r}_{1/2}^{\infty} )$
\end{cor}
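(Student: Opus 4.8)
The plan is to carry out, in dual form, the argument used for Con($\mathfrak{s}_{1/2}^{\infty}<$ non$(\mathcal{E})$), but in the Dual Hechler model instead of the Hechler model. Start from a ground model of CH, put $\kappa=\aleph_2$, and let $W$ be the extension obtained by iterating Dual Hechler forcing $\kappa$ times \emph{with countable support} (so that the iteration stays proper and $\omega^\omega$-bounding and adds no Cohen reals at limit stages). We will verify $W\models$ cov$(\mathcal{E})=\aleph_1<\kappa=\mathfrak{r}_{1/2}^{\infty}$.

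For the lower bound on $\mathfrak{r}_{1/2}^{\infty}$, by Corollary \ref{inequalities fs and fr} it is enough to show $W\models\mathfrak{fr}_{g}=\kappa$, i.e. that no family of at most $\aleph_1$ good block sequences in $W$ fails to be split by some single $A\in[\omega]^\omega$. This is the dual of Corollary \ref{values in hechler model 2}. First I would prove the single-step statement dual to Lemma \ref{preservation of cofinally spliting families}: the Dual Hechler generic splits every good block sequence of the ground model. Then, dually to Lemma \ref{iteration lemma of preservation of cofinally spliting families}, one shows this property is inherited by the iteration; hence, given $\le\aleph_1$ good block sequences in $W$, the $\aleph_2$-chain condition together with $\mathrm{cf}(\kappa)>\aleph_1$ puts them all into some intermediate model $W_\beta$ with $\beta<\kappa$, and the Dual Hechler real added at stage $\beta$ splits all of them simultaneously. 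Thus $\mathfrak{fr}_{g}\ge\kappa$, and since $\mathfrak{fr}_{g}\le 2^{\aleph_0}=\kappa$ we get $\mathfrak{r}_{1/2}^{\infty}\ge\mathfrak{fr}_{g}=\kappa$, so $\mathfrak{r}_{1/2}^{\infty}=\kappa$.

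For the value of cov$(\mathcal{E})$ I would use that Dual Hechler forcing is $\omega^\omega$-bounding and adds neither Cohen nor random reals, and that these properties survive countable support iterations; hence $W\models\mathfrak{d}=$ cov$(\mathcal{M})=$ cov$(\mathcal{N})=\aleph_1$. In particular cov$(\mathcal{M})=\mathfrak{d}$, so Theorem \ref{Equalities with the ideal E}(1) applies and gives cov$(\mathcal{E})=\max\{$cov$(\mathcal{M})$, cov$(\mathcal{N})\}=\aleph_1$. Combining the two paragraphs, $W\models$ cov$(\mathcal{E})=\aleph_1<\kappa=\mathfrak{r}_{1/2}^{\infty}$, which is the assertion.

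The main obstacle is the preservation step of the second paragraph: arguing that ``$A$ splits every good block sequence'' is not destroyed by the tail of the iteration. As in the proof of Lemma \ref{preservation of cofinally spliting families}, this is exactly where goodness enters — the recursive length function of a good block sequence locates, up to a finite error computable already in the ground model, where each block lies, so an $\omega^\omega$-bounding extension cannot move the blocks off $A$; the delicate point is to package this into a preservation property that is stable under countable support limits, along the lines of Shelah's preservation theorems. A secondary, more routine task is to check that the single step Dual Hechler forcing is indeed proper, $\omega^\omega$-bounding, and Cohen- and random-real preserving, with enough uniformity for the iteration theorems to go through.
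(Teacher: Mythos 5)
Your proposal is genuinely different from the paper's argument, but it has a fundamental gap: you posit a forcing notion called ``Dual Hechler forcing'' that is proper, $\omega^\omega$-bounding, adds no Cohen or random reals, and adds a real splitting every ground-model good block sequence---without defining it, and no such standard forcing exists under that name. The paper's phrase ``Dual Hechler model'' names a \emph{model}, not a poset: it is the two-step finite-support ccc construction in which one first forces with $\mathbb{C}_\kappa$ (Cohen of length $\kappa$) to make $\mathfrak{fr}_g = \mathfrak{r}_{1/2}^\infty = \kappa$ (using Brendle's $\mathfrak{fr}=\min\{\mathfrak{d},\mathfrak{r}\}$ together with Corollary~\ref{inequalities fs and fr}), and then forces with $\mathbb{D}_{\aleph_1}$ over the intermediate model $M=V^{\mathbb{C}_\kappa}$ to drive $\mathfrak{d}=\mathrm{cov}(\mathcal{M})=\mathrm{cov}(\mathcal{N})$ (hence $\mathrm{cov}(\mathcal{E})$) down to $\aleph_1$. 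The heavy lifting in the paper is Lemmas~\ref{preservation of cofinally spliting families} and~\ref{iteration lemma of preservation of cofinally spliting families}, which show that $\mathbb{D}_{\aleph_1}$ has property $(\star\star)$; this is what lets one pull a name for a good block sequence in $M^{\mathbb{D}_{\aleph_1}}$ back to countably many good block sequences in $M$, so that $M\models\mathfrak{fr}_g=\kappa$ transfers upward. No properness, no CS-iteration theorems, and no chain-condition reflection are needed.

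Two further comments on your sketch. First, if you had an appropriate single-step forcing, the step you describe as ``the main obstacle''---preserving ``$A$ splits every good block sequence of $W_\beta$'' through the tail of the iteration---would in fact be automatic: ``$B$ splits $\mathcal{A}$'' is a $\Pi^0_2$ statement with parameters in $W_{\beta+1}$, hence upward absolute, so once the stage-$\beta$ generic splits the block sequences of $W_\beta$ it continues to split them in $W$. The Shelah-style preservation theorems you invoke would instead be needed to keep $\omega^\omega$-boundedness and ``no Cohen/no random real'' through the countable support limits, and to get $\aleph_2$-cc for the reflection argument. Second, even if one granted the existence of your forcing, you would still owe a proof of the single-step claim that its generic splits all ground-model good block sequences---the nontrivial combinatorial content analogous to Lemma~\ref{preservation of cofinally spliting families}---and this is left entirely unaddressed. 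As it stands, the proposal does not yield a proof.
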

\begin{proof}
 Fix $\kappa> \aleph_1$. Let $\mathbb{C}_\kappa$ be a finite support iteration of Cohen forcing of length $\kappa$. Because of $\mathfrak{fr}=\min\{\mathfrak{d},\mathfrak{r}\}$ and \ref{inequalities fs and fr}, we have $V^{\mathbb{C}_\kappa} \models \kappa =$ cov$(\mathcal{M}) = \mathfrak{d}=\mathfrak{r} = \mathfrak{fr} = \mathfrak{fr}_g = \mathfrak{r}_{1/2}^\infty$. Denote $M= V^{\mathbb{C}_\kappa}$. Now, let $\mathbb{D}_{\aleph_1}$ be a finite support iteration of Hechler forcing of length $\aleph_1$ over $M$. Then $M^{\mathbb{D}_{\aleph_1}} \models \mathfrak{d} =$ cov$(\mathcal{M}) =$ cov$(\mathcal{N}) = \aleph_1$ because $\aleph_1$ dominating reals are added. Hence $M^{\mathbb{D}_{\aleph_1}} \models \aleph_1 =$ cov$(\mathcal{E})$ by theorem \ref{Equalities with the ideal E}. Also,  $M^{\mathbb{D}_{\aleph_1}} \models \kappa = \mathfrak{fr}_g = \mathfrak{r}_{1/2}^{\infty} $ because $\mathbb{D}_{\aleph_1}$ has property $(\star\star)$. 
 
\end{proof}

\section{$*$-independent families}\label{independent families}

We show that there exists a maximal $*$-independent family of size $\aleph_1$ in the Cohen model. The proof uses similar ideas as \cite[Lemma 3.16]{Halway2018}, but there are more technical details to deal with. First, since any member of an independent family is moderate, we have to construct sets with this property. The following lemmas will help us in achieving this objective. 

\begin{lemma}\label{three times}
If $R,S \subseteq \omega$, $0<\epsilon<1$ and $m\leq l$ are such that
\[
\epsilon < \frac{|R \cap m|}{m}, \frac{|S \cap m|}{m}, \frac{|S \cap l|}{l}  < 1 - \epsilon
\]
then
\[
\frac{\epsilon^2}{2} < \frac{|(R\cap m) \cup (S \cap [m,l))| }{l} < 1- \frac{\epsilon^2}{2}
\]
\end{lemma}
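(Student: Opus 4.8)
The plan is to reduce everything to elementary counting on the two intervals $[0,m)$ and $[m,l)$, together with a single case split on the size of $m$ relative to $l$. Write $T=(R\cap m)\cup(S\cap[m,l))$ for the set whose relative size we must control. Since $R\cap m\subseteq[0,m)$ and $S\cap[m,l)\subseteq[m,l)$ are disjoint, $|T|=|R\cap m|+|S\cap[m,l)|$; and, passing to complements inside $[0,l)$, one also has $l-|T|=|m\setminus R|+|[m,l)\setminus S|$. So it suffices to bound each of $|T|$ and $l-|T|$ from below by $\tfrac{\epsilon^{2}}{2}l$ and then divide by $l$.

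First I would record four elementary bounds coming from the hypotheses. From $\epsilon<\tfrac{|R\cap m|}{m}<1-\epsilon$ we get $|R\cap m|>\epsilon m$ and $|m\setminus R|>\epsilon m$. From $|S\cap m|\le m$ and $|S\cap l|>\epsilon l$ we get $|S\cap[m,l)|=|S\cap l|-|S\cap m|>\epsilon l-m$; and from $|S\cap l|<(1-\epsilon)l$ together with $|S\cap m|\ge 0$ we get $|[m,l)\setminus S|=(l-|S\cap l|)-(m-|S\cap m|)>\epsilon l-m$. The situation is now symmetric: both $|T|$ and $l-|T|$ are a sum of one term that is $>\epsilon m$ and one term that is $>\epsilon l-m$.

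Then I would split into two cases. If $m\ge\tfrac{\epsilon l}{2}$, use the first summand in each sum: $|T|\ge|R\cap m|>\epsilon m\ge\tfrac{\epsilon^{2}}{2}l$ and likewise $l-|T|\ge|m\setminus R|>\epsilon m\ge\tfrac{\epsilon^{2}}{2}l$. If $m<\tfrac{\epsilon l}{2}$, use the second summand: $|T|\ge|S\cap[m,l)|>\epsilon l-m>\tfrac{\epsilon l}{2}>\tfrac{\epsilon^{2}}{2}l$ (the last inequality since $0<\epsilon<1$), and similarly $l-|T|\ge|[m,l)\setminus S|>\tfrac{\epsilon l}{2}>\tfrac{\epsilon^{2}}{2}l$. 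In either case $\tfrac{\epsilon^{2}}{2}l<|T|<l-\tfrac{\epsilon^{2}}{2}l$, and dividing by $l$ gives the claim; all inequalities used are strict, so the conclusion is strict.

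The one subtlety, which I would flag as the "obstacle", is that the naive estimate obtained by simply adding the two lower bounds, $|T|>\epsilon m+(\epsilon l-m)=\epsilon l-(1-\epsilon)m$, is useless when $\epsilon<\tfrac12$, since its right-hand side can be negative (e.g. $m$ close to $l$). The case split is precisely what repairs this: whichever of the two intervals $[0,m)$ and $[m,l)$ is large enough, the part of $T$ living on that interval — and, symmetrically, the part of its complement living there — already carries mass at least $\tfrac{\epsilon^{2}}{2}l$ all by itself. Everything beyond that is routine bookkeeping.
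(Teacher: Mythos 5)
Your proof is correct. The decomposition $|T|=|R\cap m|+|S\cap[m,l)|$ and the case split on whether $m/l$ exceeds $\epsilon/2$ are exactly the paper's, but you handle the second case differently: the paper argues by contradiction there, assuming the conclusion fails and using the bound on $|S\cap m|/m$ to push $|S\cap l|/l$ above $1-\epsilon$, whereas you give a direct estimate $|S\cap[m,l)|>\epsilon l-m>\epsilon l/2>\epsilon^2 l/2$ (and its complementary counterpart). Your version is shorter and more symmetric, since passing to the complement inside $[0,l)$ makes the upper bound literally the mirror image of the lower one, so both cases are dispatched by a single argument. A pleasant side effect worth noting: your proof never actually invokes the hypothesis $\epsilon<|S\cap m|/m<1-\epsilon$; only the bounds on $R\cap m$ and $S\cap l$ are used. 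The paper's contradiction argument does use the $|S\cap m|$ bound, though one can check it too could be made to work without it. Either way the lemma is proved, and your route is the cleaner one.
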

\begin{proof}
First suppose that $l < \frac{2m}{\epsilon}$. Note that
\[
\displaystyle
\begin{array}{ll}
  \frac{|(R \cap m) \cup (S\cap [m,l) )|}{l}   &= \frac{|R \cap m |}{l} + \frac{|S\cap [m,l )|}{l }  \\
     &\leq \frac{m}{l}(1-\epsilon) + \frac{l-m}{l} \\
     &= 1- \epsilon\frac{m}{l} \\
     &<1 - \frac{\epsilon^2}{2}
\end{array}
\]
For the lower bound, we get
\[
\displaystyle
\begin{array}{ll}
  \frac{|(R \cap m) \cup (S\cap [m,l) )|}{l}   &= \frac{|R \cap m |}{l} + \frac{|S\cap [m,l )|}{l }  \\
     &\geq \frac{m}{l}(\epsilon)   \\
     &> \frac{\epsilon^2}{2}
\end{array}
\]
Now, suppose this were false for some $l^{\star} \geq \frac{2m}{\epsilon}$. Then, without loss of generality   
\[
\frac{|(R \cap m) \cup (S\cap [m,l^{\star}) )|}{l^{\star}} \geq 1-\frac{\epsilon^2}{2} .
\]
Since
\[
\frac{|R \cap m |}{m} < 1-\epsilon,
\]
we get
\[
\frac{|S\cap [m,l^{\star})|}{l^{\star}} \geq 1-\frac{\epsilon^2}{2} - \frac{m}{l^{\star}}(1-\epsilon).
\]
But then
\[
\frac{|S \cap m|}{m} > \epsilon
\]
implies
\[
\displaystyle
\begin{array}{ll}
  \displaystyle \frac{|S \cap l^{\star}|}{l^{\star}}    = \frac{|(S \cap m) \cup (S\cap [m,l^{\star}) ) |}{l^{\star}} & \displaystyle > \frac{m}{l^{\star}}(\epsilon) + 1-\frac{\epsilon^2}{2} - \frac{m}{l^{\star}}(1-\epsilon)  \\ 
     &\displaystyle = 1-\frac{\epsilon^2}{2} + \frac{2m}{l^{\star}}(\epsilon) -  \frac{m}{l^{\star}} \\ \\
     &\displaystyle >  1-\frac{\epsilon^2}{2} -  \frac{m}{l^{\star}}   \\
     &\displaystyle \geq 1- \epsilon
\end{array}
\]
which is a contradiction. 

\end{proof}

\begin{lemma}\label{union of intervals}
If $R,S \subseteq \omega$ are disjoint finite sets of sizes $r$ and $s$, respectively, $s=c\cdot r$ for some $c>1$, $q_0,q_1 \in (0,1)$ and $A\subseteq R$, $B\subseteq S$ such that
\[
q_0 \leq \frac{|B|}{|S|}  \leq q_1
\]
then
\[
q_0 - \frac{1}{c} \leq \frac{|A \cup B |}{|R \cup S|}  \leq q_1  + \frac{1}{c}
\]
\end{lemma}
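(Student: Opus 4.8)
The statement is purely arithmetic, so the plan is a direct computation splitting into the two inequalities. Write $r=|R|$, $s=|S|=cr$, so $|R\cup S|=r+s=r(1+c)$ since $R$ and $S$ are disjoint. Also $|A\cup B|=|A|+|B|$ by disjointness, and $|A|$ ranges in $\{0,1,\dots,r\}$ while $|B|$ is pinned between $q_0 s$ and $q_1 s$ by hypothesis.

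For the upper bound, I would estimate
\[
\frac{|A\cup B|}{|R\cup S|}=\frac{|A|+|B|}{r(1+c)}\le \frac{r+q_1 s}{r(1+c)}=\frac{1+q_1 c}{1+c}.
\]
It then suffices to check $\frac{1+q_1 c}{1+c}\le q_1+\frac1c$, i.e. $c(1+q_1 c)\le c(1+c)q_1+(1+c)$, i.e. $c+q_1 c^2\le q_1 c+q_1 c^2+1+c$, which reduces to $0\le q_1 c+1$ — true since $q_1,c>0$. Symmetrically, for the lower bound,
\[
\frac{|A\cup B|}{|R\cup S|}\ge\frac{0+q_0 s}{r(1+c)}=\frac{q_0 c}{1+c},
\]
and one checks $\frac{q_0 c}{1+c}\ge q_0-\frac1c$, i.e. $q_0 c^2\ge q_0 c(1+c)-(1+c)$, i.e. $q_0 c^2\ge q_0 c+q_0 c^2-1-c$, which reduces to $1+c\ge q_0 c$ — true since $q_0<1<1+\frac1c$, so $q_0 c<c+1$.

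There is essentially no obstacle here: the only things to be careful about are that $R\cap S=\emptyset$ is used twice (once for each cardinality identity) and that the bound on $|B|/|S|$ must be converted to a bound on $|B|$ by multiplying through by $s=cr>0$. The ``$+\frac1c$'' and ``$-\frac1c$'' slack terms are exactly what is needed to absorb the worst cases $|A|=r$ (all of $R$ lands in the union) and $|A|=0$ (none does), since $\frac{r}{r(1+c)}=\frac{1}{1+c}\le\frac1c$. I would present the two displays above, note the elementary algebraic verification of each resulting inequality, and conclude.
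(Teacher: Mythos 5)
Your computation is correct: the disjointness hypotheses give $|R\cup S|=r+s=r(1+c)$ and $|A\cup B|=|A|+|B|$ with $0\le|A|\le r$ and $q_0 s\le|B|\le q_1 s$, and the resulting bounds $\frac{q_0 c}{1+c}$ and $\frac{1+q_1 c}{1+c}$ do lie within the claimed window, since $\frac{1+q_1c}{1+c}-q_1=\frac{1-q_1}{1+c}\le\frac{1}{c}$ and $q_0-\frac{q_0 c}{1+c}=\frac{q_0}{1+c}\le\frac{1}{c}$. The paper itself gives no argument here (it simply cites Lemma 3.14 of \cite{Halway2018}), and your direct elementary verification is the natural proof of that cited lemma, so you have in effect supplied the proof the paper outsources.
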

\begin{proof}
Same proof as \cite[Lemma 3.14]{Halway2018}.      
\end{proof}

\begin{cor}\label{Moderate union of intervals}
Let $k\in \omega$ and $0< \epsilon_0 < 1$. Let $Z, W \subseteq \omega$ be such that $Z\subseteq k$ and $\epsilon_0 < \underline{d}(W) \leq \bar{d}(W) < 1-\epsilon_0$. Denote $Z \cup (W\cap [k,l))$ by $Z_l$. Then, there exists $k^\prime \in \omega$ such that for any $l \geq k^\prime$ we have
\[
\epsilon_0 < d_l(Z_l) < 1 - \epsilon_0
\]
\end{cor}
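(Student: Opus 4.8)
The plan is to derive Corollary \ref{Moderate union of intervals} from Lemma \ref{union of intervals} by splitting the interval $[0,l)$ into the fixed piece $[0,k)$ and the ``tail'' piece $[k,l)$, and then observing that once $l$ is large enough the tail dominates in size so that the contribution of $Z\subseteq k$ becomes negligible. More precisely, write $R=[k,l)$ and $S=\emptyset$ is the wrong split; instead I would apply Lemma \ref{union of intervals} with the roles reversed: let $R=k=[0,k)$ (a finite set of size $r=k$) and $S=[k,l)$ (size $s=l-k$), so that $s=c\cdot r$ with $c=(l-k)/k$, which can be made as large as we please by taking $l$ large. Take $A=Z\subseteq R$ and $B=W\cap[k,l)\subseteq S$. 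Then $Z_l=A\cup B$ and $R\cup S=[0,l)$, so $d_l(Z_l)=\frac{|A\cup B|}{|R\cup S|}$.

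Next I would control $\frac{|B|}{|S|}=\frac{|W\cap[k,l)|}{l-k}$. Since $\epsilon_0<\underline d(W)\le\bar d(W)<1-\epsilon_0$, there is $N$ such that for all $j\ge N$ we have $\epsilon_0<d_j(W)<1-\epsilon_0$, i.e. $\epsilon_0<\frac{|W\cap j|}{j}<1-\epsilon_0$. Writing $|W\cap[k,l)|=|W\cap l|-|W\cap k|$ and dividing by $l-k$, a short estimate (using $|W\cap l|<(1-\epsilon_0)l$, $|W\cap l|>\epsilon_0 l$, and $0\le|W\cap k|\le k$) shows that $\frac{|W\cap[k,l)|}{l-k}$ lies in an interval $[q_0(l),q_1(l)]$ with $q_0(l)\to$ something $\ge \epsilon_0$-ish and $q_1(l)\to$ something $\le 1-\epsilon_0$ as $l\to\infty$; in fact for $l$ large the quotient is squeezed into, say, $(\epsilon_0-\delta,\,1-\epsilon_0+\delta)$ for any prescribed small $\delta$. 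Pick $\delta$ (e.g. $\delta$ such that the final bounds work, say anything with room to spare) and an $N_1\ge N$ so that for $l\ge N_1$ we get $q_0:=\epsilon_0-\delta\le\frac{|B|}{|S|}\le q_1:=1-\epsilon_0+\delta$.

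Then Lemma \ref{union of intervals} gives
\[
q_0-\frac1c\le d_l(Z_l)\le q_1+\frac1c,
\]
where $\frac1c=\frac{k}{l-k}\to 0$ as $l\to\infty$. Choosing $k'\ge N_1$ large enough that for all $l\ge k'$ we simultaneously have $\frac{k}{l-k}<\delta'$ for a suitably small $\delta'$ (chosen up front together with $\delta$ so that $\epsilon_0<q_0-\delta'$ and $q_1+\delta'<1-\epsilon_0$, which is possible by picking $\delta,\delta'$ small relative to the gap $\epsilon_0$ vs $\underline d(W)$ and $\bar d(W)$ vs $1-\epsilon_0$ — but wait, the hypothesis only gives $\underline d(W)>\epsilon_0$, not with a known margin, so I must instead use the genuine $\liminf/\limsup$ margin), we conclude $\epsilon_0<d_l(Z_l)<1-\epsilon_0$ for all $l\ge k'$, as desired.

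The point to be careful about — and the main obstacle — is bookkeeping the constants so that the two error terms ($\delta$ from approximating $d_j(W)$ by the liminf/limsup bounds, and $\frac1c$ from Lemma \ref{union of intervals}) are each strictly smaller than the slack we actually have. The clean way: let $\eta=\min\{\underline d(W)-\epsilon_0,\ 1-\epsilon_0-\bar d(W)\}>0$. Choose $N$ so that $d_j(W)\in(\underline d(W)-\eta/3,\ \bar d(W)+\eta/3)$ for $j\ge N$; then for $l$ large the tail density $\frac{|W\cap[k,l)|}{l-k}$ lies in $(\epsilon_0+\eta/2,\ 1-\epsilon_0-\eta/2)$ (the extra room absorbing the $|W\cap k|/(l-k)\to0$ correction); finally take $k'$ so large that $\frac{k}{l-k}<\eta/2$ for all $l\ge k'$, and Lemma \ref{union of intervals} yields $d_l(Z_l)\in(\epsilon_0+\eta/2-\eta/2,\ 1-\epsilon_0-\eta/2+\eta/2)=(\epsilon_0,\,1-\epsilon_0)$. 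This is routine once the split $R=[0,k)$, $S=[k,l)$ is set up, so I would keep the write-up short.
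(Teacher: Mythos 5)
Your proposal is correct and follows essentially the same route as the paper: decompose $[0,l)$ into $R=[0,k)$ and $S=[k,l)$, observe that the tail density $\frac{|W\cap[k,l)|}{l-k}$ shares the same $\liminf$/$\limsup$ bounds as $d_l(W)$ for large $l$, invoke Lemma~\ref{union of intervals} with $A=Z$, $B=W\cap[k,l)$, and then take $k'$ large enough that the error term $1/c=k/(l-k)$ is absorbed by the slack between $\epsilon_0$ and $\underline{d}(W)$ (resp.\ $\bar{d}(W)$ and $1-\epsilon_0$). The paper's write-up packages the same idea a bit more compactly by fixing a single $\delta$ with $\epsilon_0<\delta\leq\underline{d}(W)\leq\bar{d}(W)\leq 1-\delta<1-\epsilon_0$ and then using the identity $\limsup_n\frac{|W\cap n|}{n}=\limsup_n\frac{|W\cap[k,n)|}{n-k}$ rather than your explicit $\eta$-bookkeeping, but the content is the same.
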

\begin{proof}
Let $\delta>0$ be such that $\epsilon_0 < \delta \leq \underline{d}(W) \leq \bar{d}(W) \leq 1- \delta < 1-\epsilon_0$. First, notice that
\[
\begin{array}{c}
     \displaystyle\limsup_{n \to \infty} \frac{|W \cap n|}{n} = \limsup_{n \to \infty} \frac{|W \cap [k,n)|}{n-k}   \\
     \displaystyle\liminf_{n \to \infty} \frac{|W \cap n|}{n} = \liminf_{n \to \infty} \frac{|W \cap [k,n)|}{n-k}  
\end{array}
\]
then, there is $N>0$ such that for all $l>N$ we have
\[
\delta \leq \frac{|W \cap [k,l)|}{l-k}\leq 1 - \delta
\]
Thus, for any $l>N$, we can apply lemma \ref{union of intervals} to $R:= k$, $S:=[k,l)$, $A:= Z$, $B:=W\cap [k,l)$, $q_0:= \delta$ and $q_1:=1-\delta$ to obtain
\[
\delta - \frac{1}{c} \leq \frac{|Z_l|}{l} \leq 1-\delta + \frac{1}{c}
\]
where $c=(l-k)/k$. Choose $k^\prime$ large enough to guarantee $\epsilon_0 <\delta-\frac{1}{c}$.

\end{proof}

\begin{cor}\label{density union of intervals}
Let $k\in \omega$ and $0< q < 1$. Let $Z, W \subseteq \omega$ be such that $Z\subseteq k$ and $d(W)=q$. Denote $Z \cup (W\cap [k,l))$ by $Z_l$. Then, for all $\delta>0$ there exists $k^\prime \in \omega$ such that for any $l \geq k^\prime$ we have
\[
d_l(Z_l) \in (q -\delta, q + \delta) 
\]    
\end{cor}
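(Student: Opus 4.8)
The statement is a special case of Corollary~\ref{Moderate union of intervals}, now applied with a set $W$ of genuine density $q$ rather than merely a moderate one, and we want the sharper conclusion that $d_l(Z_l)$ lands in an arbitrarily small window $(q-\delta,q+\delta)$ around $q$. The plan is to run exactly the argument of Corollary~\ref{Moderate union of intervals}, but replacing the crude $\liminf/\limsup$ estimates by the fact that $d(W)=q$ gives two-sided control. First I would record, as in that proof, that
\[
\lim_{n\to\infty}\frac{|W\cap n|}{n}=\lim_{n\to\infty}\frac{|W\cap[k,n)|}{n-k}=q,
\]
which holds because the finite set $W\cap k$ contributes a vanishing correction. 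Hence, given $\delta>0$, pick $\delta'\in(0,\delta)$ and an $N$ such that for all $l>N$ one has $q-\delta'\le \frac{|W\cap[k,l)|}{l-k}\le q+\delta'$.

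Next I would invoke Lemma~\ref{union of intervals} with $R:=k$ (viewed as the finite set $\{0,\dots,k-1\}$), $S:=[k,l)$, $A:=Z$, $B:=W\cap[k,l)$, $q_0:=q-\delta'$ and $q_1:=q+\delta'$, and $c:=(l-k)/k$. This yields
\[
q-\delta'-\frac1c\ \le\ \frac{|Z_l|}{l}\ \le\ q+\delta'+\frac1c .
\]
Finally, since $c=(l-k)/k\to\infty$ as $l\to\infty$, choose $k'\ge N$ large enough that for all $l\ge k'$ we have $\frac1c<\delta-\delta'$; then $d_l(Z_l)=\frac{|Z_l|}{l}\in(q-\delta,q+\delta)$, as required.

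There is essentially no obstacle here: the work is entirely contained in Lemma~\ref{union of intervals}, and the only mild care needed is to make sure $c>1$ (true once $l>2k$) so the lemma applies, and to split the margin $\delta$ into the density-approximation part $\delta'$ and the $\tfrac1c$-part. If one wants $q-\delta$ or $q+\delta$ to possibly fall outside $(0,1)$, that is harmless since $d_l(Z_l)\in[0,1]$ always, so the conclusion is only a constraint when the interval is nontrivial; I would not belabor this. One could alternatively bypass Lemma~\ref{union of intervals} entirely and argue directly: $|Z_l|=|Z|+|W\cap[k,l)|$ with $|Z|\le k$ fixed, so $\frac{|Z_l|}{l}-\frac{|W\cap[k,l)|}{l}\to 0$ and $\frac{|W\cap[k,l)|}{l}\to q$, giving the claim immediately — but since Lemma~\ref{union of intervals} is already in hand, reusing it keeps the exposition parallel to Corollary~\ref{Moderate union of intervals}.
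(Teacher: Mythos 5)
Your proof is correct and is exactly the argument the paper intends -- the paper's own proof reads simply ``Same proof as \ref{Moderate union of intervals}'', i.e.\ use $d(W)=q$ to pin $|W\cap[k,l)|/(l-k)$ within $\delta'$ of $q$ for large $l$, then invoke Lemma~\ref{union of intervals} with $R=k$, $S=[k,l)$ and absorb the $1/c$ error by taking $k'$ large. The only small point to make explicit is that $\delta'$ should also be chosen below $\min(q,1-q)$ so that $q_0=q-\delta'$ and $q_1=q+\delta'$ lie in $(0,1)$ as Lemma~\ref{union of intervals} requires (trivial since $0<q<1$); the direct alternative you sketch, using $|Z_l|/l - |W\cap[k,l)|/l \to 0$, is equally valid and even shorter.
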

\begin{proof}
Same proof as \ref{Moderate union of intervals}.

\end{proof}

Second, to get a maximal $*$-independent family, we make a recursive construction in which we have to extend an $*$-independent family already constructed. The following lemmas will help us with this.

\begin{lemma}\label{joinpieces}
Let $k\in \omega$ and $0<\epsilon<1$. Let $\mathcal{B}=\{B_i\}_{i< n}$ be an $*$-independent family and $Z$ a finite subset of $\omega$ such that for all $E \subseteq n$ we have
\[
\displaystyle  \frac{d_{k}(\bigcap_{i \in E} B_i \cap Z)}{\prod_{i\in E}d_{k}(B_i)d_{k}(Z)} \in \left( 1-\epsilon, 1+ \epsilon \right)
\]    
Also, let $W \subseteq \omega$ and $k^\prime \in \omega$ such that
\begin{itemize}
    \item $\displaystyle  \frac{d_{l}(\bigcap_{i \in E} B_i \cap W)}{\prod_{i\in E}d_{l}(B_i)d_{l}(W)},  \frac{\prod_{i\in E}d_{l}(B_i)d_{l}(W)}{d_{l}(\bigcap_{i \in E} B_i \cap W)} \in \left( 1-\epsilon, 1+ \epsilon \right)$ for all $k \leq l \leq k^\prime$. 
    \item $\displaystyle \frac{|W \cap k|}{|Z \cap k|}, \frac{|Z \cap k|}{|W \cap k|} \in \left( 1-\epsilon, 1+ \epsilon \right)$
\end{itemize}
Then, if we define $Z^\prime:=Z \cup (W\cap [k, k^\prime))$, the following holds
\[
\displaystyle  \frac{d_{l}(\bigcap_{i \in E} B_i \cap Z^\prime)}{\prod_{i\in E}d_{l}(B_i)d_{l}(Z^\prime)} \in \left( 1-15\epsilon, 1+ 15\epsilon \right)
\]
for all $E \subseteq n$  and for all $k \leq l \leq k^\prime$.
\end{lemma}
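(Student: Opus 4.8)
The goal is to show that if $Z^\prime = Z \cup (W \cap [k, k^\prime))$, then for all $E \subseteq n$ and all $k \le l \le k^\prime$, the ratio $\frac{d_l(\bigcap_{i\in E} B_i \cap Z^\prime)}{\prod_{i\in E} d_l(B_i)\, d_l(Z^\prime)}$ lies in $(1 - 15\epsilon, 1 + 15\epsilon)$. The natural strategy is to fix $E \subseteq n$ and $l$ with $k \le l \le k^\prime$, decompose every relevant set over the cut at $k$, and track how the known $\epsilon$-approximations propagate through the arithmetic. Write $Z^\prime \cap l = (Z \cap k) \cup (W \cap [k, l))$ (using $Z \subseteq k$), and correspondingly $\bigcap_{i\in E} B_i \cap Z^\prime \cap l = \big(\bigcap_{i\in E} B_i \cap Z \cap k\big) \cup \big(\bigcap_{i\in E} B_i \cap W \cap [k,l)\big)$, a disjoint union. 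So I would express $d_l(Z^\prime)$ and $d_l(\bigcap_{i\in E} B_i \cap Z^\prime)$ each as a weighted average of a ``$k$-piece'' (governed by the first hypothesis, about $Z$ at level $k$) and an ``$[k,l)$-piece'' (governed by the second hypothesis, about $W$ at levels between $k$ and $k^\prime$, hence between $k$ and $l$).

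The key observation that makes the bookkeeping manageable is that both hypotheses say the $k$-piece and the $[k,l)$-piece \emph{individually} satisfy the same multiplicative independence estimate — the $Z$-piece with factor in $(1-\epsilon, 1+\epsilon)$, and the $W$-piece with factor and its reciprocal in $(1-\epsilon, 1+\epsilon)$ — plus the extra hypothesis $\frac{|W \cap k|}{|Z \cap k|}, \frac{|Z \cap k|}{|W\cap k|} \in (1-\epsilon, 1+\epsilon)$ forces $d_k(Z)$ and $d_k(W)$ to be within a factor $(1-\epsilon, 1+\epsilon)$ of each other. The plan is then: (1) from $d_k(W) \approx d_k(Z)$ and the $W$-estimate at level $k$, deduce that $\prod_{i\in E} d_k(B_i) \cdot d_k(Z)$ is within a bounded factor (something like $(1-3\epsilon, 1+3\epsilon)$ after clearing reciprocals) of $d_k(\bigcap_{i\in E} B_i \cap W)/\text{(something)}$ — i.e., reconcile the two pieces at the common level $k$ so that I can write $|\bigcap_{i\in E} B_i \cap W \cap k|$ in terms of $|\bigcap_{i\in E} B_i \cap Z \cap k|$ up to a controlled factor; (2) now $|\bigcap_{i\in E} B_i \cap Z^\prime \cap l|$ is $|\bigcap_{i\in E} B_i \cap Z \cap k|$ plus $|\bigcap_{i\in E} B_i \cap W \cap [k,l)|$, and by step (1) and the $W$-estimate at level $l$ \emph{both} summands are, to within a factor in roughly $(1-5\epsilon, 1+5\epsilon)$, equal to $\prod_{i\in E} d_\cdot(B_i)$ times the corresponding piece of $Z^\prime$; (3) a weighted average of quantities each within a factor $(1-c\epsilon, 1+c\epsilon)$ of a common target is itself within that factor, so $d_l(\bigcap_{i\in E} B_i \cap Z^\prime) / \big(\prod_{i\in E} d_l(B_i) \cdot d_l(Z^\prime)\big)$ lands within $(1 - 15\epsilon, 1+15\epsilon)$ once all the constants are added up. Throughout I would use the elementary fact that if $a_1, a_2 \ge 0$ and $b_1, b_2 > 0$ with $a_j/b_j \in (1-\delta, 1+\delta)$ for $j = 1,2$, then $(a_1+a_2)/(b_1+b_2) \in (1-\delta, 1+\delta)$ — this is the mediant/averaging inequality and it is what lets me glue the two pieces without the errors compounding multiplicatively.

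The main obstacle is keeping track of the constant: the hypotheses give clean factors of $\epsilon$ on individual pieces, but each time I reconcile the $Z$-normalization with the $W$-normalization (needed because the conclusion is stated relative to $Z^\prime$, whose $k$-part comes from $Z$ but whose density profile at level $l$ is influenced by $W$), I lose another factor of roughly $(1 \pm \epsilon)^{\pm 1}$, and expanding products $(1+\epsilon)^j$ for small $j$ (here $j$ is bounded since $\epsilon$ is small and the number of reconciliation steps is a fixed small number independent of $E$, $l$, $n$) into linear form $1 + c\epsilon$ is where the constant $15$ (rather than, say, $3$) appears. Concretely I expect: $\epsilon$ from the $W$-level-$l$ estimate, another from its reciprocal direction, $\epsilon$ from $d_k(W) \approx d_k(Z)$, $\epsilon$ from the $W$-level-$k$ estimate used to transfer the $B$-factor normalization, plus the cross-terms when these are multiplied together — bounding $(1+\epsilon)^5 - 1$ and $1 - (1-\epsilon)^5$ by $15\epsilon$ for $\epsilon$ small enough (which we may assume, since for $\epsilon$ not small the statement is vacuous as the interval contains everything). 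I would present the argument for a fixed $E$ and $l$, do the averaging glue explicitly once, and remark that all constants are uniform. The only genuinely delicate point is making sure that in step (1) I am allowed to divide by $|\bigcap_{i\in E} B_i \cap W \cap k|$ and $|\bigcap_{i\in E} B_i \cap Z \cap k|$, i.e. that these are nonzero — this follows because the $*$-independence of $\mathcal{B}$ together with the first hypothesis forces $d_k(\bigcap_{i\in E} B_i \cap Z) > 0$, and similarly for $W$ from the second hypothesis at level $l = k$.
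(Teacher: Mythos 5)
Your decomposition of $Z'$ over the cut at $k$ and your step (1) — reconciling the $Z$- and $W$-data at level $k$ via the extra hypothesis $\frac{|W\cap k|}{|Z\cap k|},\frac{|Z\cap k|}{|W\cap k|}\in(1-\epsilon,1+\epsilon)$ and the $W$-estimate at $l=k$ — matches the key identity the paper exploits. But steps (2)–(3) have a genuine gap. To invoke the mediant/averaging inequality with target denominator $\prod_{i\in E}d_l(B_i)\cdot|Z'\cap l|$, you would need to split it as $\prod_{i\in E}d_l(B_i)\cdot|Z\cap k|$ plus $\prod_{i\in E}d_l(B_i)\cdot|W\cap[k,l)|$ and show each summand approximates the corresponding numerator piece. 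That fails in two ways. For the $k$-piece, the hypothesis on $Z$ only controls $\frac{|\bigcap_{i\in E}B_i\cap Z\cap k|}{\prod_{i\in E}d_k(B_i)\,|Z\cap k|}$, with $\prod d_k(B_i)$, not $\prod d_l(B_i)$, and there is no hypothesis comparing $\prod_{i\in E}d_k(B_i)$ to $\prod_{i\in E}d_l(B_i)$ — the densities of the $B_i$ may drift arbitrarily between $k$ and $l$. For the $[k,l)$-piece of $W$, subtracting the level-$l$ and level-$k$ estimates gives $|\bigcap B_i\cap W\cap[k,l)| \approx \prod d_l(B_i)|W\cap l| - \prod d_k(B_i)|W\cap k|$, which differs from the needed $\prod d_l(B_i)\,|W\cap[k,l)|$ by $(\prod d_l(B_i)-\prod d_k(B_i))|W\cap k|$, again uncontrolled. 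So this is not a matter of tightening constants; the $\prod d_k(B_i)$ vs.\ $\prod d_l(B_i)$ mismatch is an obstruction that the mediant strategy cannot absorb.

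The paper circumvents exactly this by never splitting $\prod d_l(B_i)$ across the two pieces. It first uses the reciprocal form of the $W$-hypothesis at level $l$ to replace $\frac{1}{\prod_{i\in E}d_l(B_i)}$ by $(1\pm\epsilon)\cdot\frac{|W\cap l|}{|\bigcap_{i\in E}B_i\cap W\cap l|}$, at which point the $B_i$-densities disappear entirely, and the target reduces to bounding
\[
\left(\frac{|W\cap k|+l'}{|Z\cap k|+l'}\right)\left(\frac{|\bigcap_{i\in E}B_i\cap Z\cap k|+l''}{|\bigcap_{i\in E}B_i\cap W\cap k|+l''}\right),\quad l'=|W\cap[k,l)|,\ l''=|\bigcap_{i\in E}B_i\cap W\cap[k,l)|.
\]
This is then handled by a four-way case analysis using the monotonicity fact that $\frac{\alpha+\gamma}{\beta+\gamma}$ lies between $1$ and $\alpha/\beta$, together with the level-$k$ reconciliation (your step (1)), yielding a worst case of $(1+\epsilon)^4\leq 1+15\epsilon$. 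If you want to salvage your write-up, restructure steps (2)–(3) along these lines: after the factoring trick, your step (1) already provides the needed control on the raw cardinality ratios, and the averaging intuition you wanted is realized through the $\frac{\alpha+\gamma}{\beta+\gamma}$ monotonicity lemma rather than the literal mediant inequality.
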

\begin{proof}
Fix $E \subseteq n$. For any $k \leq l \leq k^\prime$ we have
\[
\begin{array}{ll}
     \displaystyle  \frac{d_{l}(\bigcap_{i \in E} B_i \cap Z^\prime)}{\prod_{i\in E}d_{l}(B_i)d_{l}(Z^\prime)} &=  \displaystyle \frac{1}{\prod_{i\in E}d_{l}(B_i)}\frac{|\bigcap_{i \in E} B_i \cap Z\cap k|+|\bigcap_{i \in E} B_i \cap W \cap [k,l)|}{|Z\cap k|+|W\cap [k,l)|}    \\ \\
     &\leq  \displaystyle\left(1+  \epsilon \right)\left(  \frac{ |W \cap l |}{ |\bigcap_{i \in E} B_i \cap W \cap l|} \right)\left( \frac{|\bigcap_{i \in E} B_i \cap Z \cap k|+|\bigcap_{i \in E} B_i \cap W \cap [k ,l)|}{|Z\cap k|+|W\cap [k,l)|}  \right) \\ \\
     &= \displaystyle\left(1+ \epsilon\right)\left(\frac{|W\cap l|}{ |Z\cap k|+|W\cap [k,l)|} \right) \left( \frac{|\bigcap_{i \in E} B_i \cap Z \cap k|+|\bigcap_{i \in E} B_i \cap W \cap [k,l)|}{|\bigcap_{i \in E} B_i \cap W \cap l|} \right) \\ \\
     &= \displaystyle\left(1+ \epsilon\right)\left(\frac{|W\cap k|+ l^\prime}{ |Z\cap k|+l^\prime} \right) \left( \frac{|\bigcap_{i \in E} B_i \cap Z \cap k|+l^{\prime\prime}}{|\bigcap_{i \in E} B_i \cap W \cap k|+l^{\prime\prime}} \right)     
\end{array}
\]
where $l^\prime=|W\cap [k,l)|$ and $l^{\prime\prime}=|\bigcap_{i \in E} B_i \cap W \cap [k,l)|$. We have 4 cases:
\begin{description}
\item[$|W\cap k|\leq|Z\cap k|$ and $|\bigcap_{i \in E} B_i \cap W \cap k| \geq |\bigcap_{i \in E} B_i \cap Z \cap k|$.] In this case, we get
\[
\left(\frac{|W\cap k|+ l^\prime}{ |Z\cap k|+l^\prime} \right) \left( \frac{|\bigcap_{i \in E} B_i \cap Z \cap k|+l^{\prime\prime}}{|\bigcap_{i \in E} B_i \cap W \cap k|+l^{\prime\prime}} \right)  \leq 1
\]
\item[$|W\cap k|\geq|Z\cap k|$ and $|\bigcap_{i \in E} B_i \cap W \cap k|\leq |\bigcap_{i \in E} B_i \cap Z \cap k|$.] Note that
\begin{equation}\label{equationlemmaunion}
\displaystyle \frac{|W\cap k |}{ |Z \cap k|}  \frac{|\bigcap_{i \in E} B_i \cap Z \cap k |}{|\bigcap_{i \in E} B_i \cap W \cap k |}  =  \frac{\prod_{i\in E}d_{k}(B_i) |W\cap k|}{|\bigcap_{i \in E} B_i \cap W \cap k|  }  \frac{|\bigcap_{i \in E} B_i \cap Z \cap k|}{\prod_{i\in E}d_{k}(B_i) |Z\cap k|} \leq \left(1+ \epsilon \right)^2
\end{equation}
Therefore, in this case, we get
\[
\left(\frac{|W\cap k|+ l^\prime}{ |Z\cap k|+l^\prime} \right) \left( \frac{|\bigcap_{i \in E} B_i \cap Z \cap k|+l^{\prime\prime}}{|\bigcap_{i \in E} B_i \cap W \cap k|+l^{\prime\prime}} \right)  \leq  \left(\frac{|W\cap k|}{ |Z\cap k|} \right) \left( \frac{|\bigcap_{i \in E} B_i \cap Z \cap k|}{|\bigcap_{i \in E} B_i \cap W \cap k|} \right) \leq  \left(1+ \epsilon \right)^2
\]
\item[$|W\cap k|\geq|Z\cap k|$ and $|\bigcap_{i \in E} B_i \cap W \cap k|\geq|\bigcap_{i \in E} B_i \cap Z \cap k|$.] In this case, we have
\[
\left(\frac{|W\cap k|+ l^\prime}{ |Z\cap k|+l^\prime} \right) \left( \frac{|\bigcap_{i \in E} B_i \cap Z \cap k|+l^{\prime\prime}}{|\bigcap_{i \in E} B_i \cap W \cap k|+l^{\prime\prime}} \right)  \leq  \frac{|W\cap k|}{ |Z\cap k|}   \leq  1+ \epsilon 
\]
\item[$|W\cap k|\leq|Z\cap k|$ and $|\bigcap_{i \in E} B_i \cap W \cap k|\leq|\bigcap_{i \in E} B_i \cap Z \cap k|$.]  Note that in \ref{equationlemmaunion} we also get 
\[
\frac{|\bigcap_{i \in E} B_i \cap Z \cap k|}{|\bigcap_{i \in E} B_i \cap W \cap k| } \leq  (1+\epsilon)^2 \frac{|Z \cap k|}{|W \cap k|} \leq (1+ \epsilon)^3
\]
Thus, in this case we have
\[
\left(\frac{|W\cap k|+ l^\prime}{ |Z\cap k|+l^\prime} \right) \left( \frac{|\bigcap_{i \in E} B_i \cap Z \cap k|+l^{\prime\prime}}{|\bigcap_{i \in E} B_i \cap W \cap k|+l^{\prime\prime}} \right)  \leq   \frac{|\bigcap_{i \in E} B_i \cap Z \cap k|}{|\bigcap_{i \in E} B_i \cap W \cap k| }    \leq  (1+ \epsilon)^3
\]
\end{description}

To summarize, we get
\[
\displaystyle  \frac{d_{l}(\bigcap_{i \in E} B_i \cap Z^\prime)}{\prod_{i\in E}d_{l}(B_i)d_{l}(Z^\prime)} \leq (1+\epsilon)^4 \leq 1+ 15\epsilon
\]
In a similar way, we have the lower bound.

\end{proof}

\begin{lemma}\label{stabilizepieces}
Let $k\in \omega$ and $0<\epsilon<1$. Let $\mathcal{B}=\{B_i\}_{i< n}$ be an $*$-independent family and $Z$ a subset of $k$. Let $W$ be a subset of $\omega$ such that $\mathcal{B} \cup \{W\}$ is an $*$-independent family. Denote $Z \cup (W\cap [k, l))$ by $Z_l$. Then, there exists $k^\prime \in \omega$ such that for all $E \subseteq n$ and $l \geq k^\prime$ we have
\[
\displaystyle  \frac{d_{l}(\bigcap_{i \in E} B_i \cap Z_l)}{\prod_{i\in E}d_{l}(B_i)d_{l}(Z_l)} \in \left( 1- \epsilon, 1+ \epsilon  \right)
\]
\end{lemma}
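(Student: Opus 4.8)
The plan is to reduce the statement to an application of Lemma \ref{joinpieces} combined with the stabilization results Corollary \ref{Moderate union of intervals} and Corollary \ref{density union of intervals}. The key observation is that $\mathcal{B} \cup \{W\}$ being $*$-independent means that for every $E \subseteq n$ the quantity $\frac{d_l(\bigcap_{i\in E} B_i \cap W)}{\prod_{i\in E} d_l(B_i)\, d_l(W)}$ converges to $1$ as $l \to \infty$; in particular it, and its reciprocal, are eventually in $(1-\epsilon, 1+\epsilon)$. Likewise $W$ is moderate, so $d_l(W)$ is eventually bounded away from $0$ and $1$, and $Z \subseteq k$ is fixed and finite, so the ratios $\frac{|W \cap k'|}{|Z_{k'} \cap k'|}$-type expressions behave well. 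My first step would be to fix $\epsilon' > 0$ small enough that $15\epsilon' \le \epsilon$ (so that the conclusion of Lemma \ref{joinpieces}, which outputs an error $15\epsilon'$, lands inside $(1-\epsilon, 1+\epsilon)$), and then choose a threshold $m$ past which all the hypotheses of Lemma \ref{joinpieces} hold with $\epsilon'$ in place of $\epsilon$: namely, past $m$ all the independence ratios for $\mathcal{B} \cup \{W\}$ are within $\epsilon'$ of $1$, and $d_l(W) \in (\delta, 1-\delta)$ for a suitable $\delta$ (using moderateness of $W$ and $\bar d$, $\underline d$).

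Next I would handle the interval $[k, m]$, which is only finitely many values of $l$, by brute force: for each such $l$, the set $\bigcap_{i\in E} B_i \cap Z_l$ stabilizes once $l$ is large enough that $W \cap [k,l)$ has "absorbed" the relevant finite information, so one simply takes $k'$ large enough that the desired ratio inequality holds at every $l$ in this finite range — this is where Corollary \ref{density union of intervals} (applied with the appropriate target density $q = \prod_{i\in E}d(B_i)\,d(W)$ for each $E$) and Corollary \ref{Moderate union of intervals} do the work of showing $d_l(Z_l)$ and $d_l(\bigcap_{i\in E}B_i \cap Z_l)$ converge to the right limits; since there are only finitely many $E \subseteq n$ and finitely many relevant $l$, one can take a single $k'$ working for all of them. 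Finally, for $l$ beyond this stabilization point, I would invoke Lemma \ref{joinpieces} directly with $Z' = Z \cup (W \cap [k, l)) = Z_l$, $k^\prime$ there taken to be our current $l$, and $\epsilon'$ as above: its first bulleted hypothesis (the independence ratios for $\mathcal{B} \cup \{W\}$ lying in $(1-\epsilon', 1+\epsilon')$ on $[k, l]$) holds once we are past $m$, and the second bulleted hypothesis (the ratio of $|W \cap k|$ to $|Z \cap k|$) is a fixed finite comparison that we absorb by taking $\epsilon'$ small enough or by noting it is automatically controlled — actually, since $Z \subseteq k$ and $W$ is fixed, we may need to slightly adjust: replace $Z$ by $Z \cup (W \cap [\min, k))$ conceptually, or simply observe that after absorbing enough of $W$ the comparison at level $k$ is irrelevant because $Z_l$ eventually looks like $W$. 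The cleanest route is to apply Lemma \ref{joinpieces} with its "$k$" equal to our stabilization threshold $m$ rather than the original $k$, its "$Z$" equal to $Z_m = Z \cup (W \cap [k,m))$, and its "$W$" equal to $W$; then the level-$m$ comparison $\frac{|W \cap m|}{|Z_m \cap m|}$ tends to $1$ as $m \to \infty$ because $Z_m$ and $W$ agree on $[k,m)$ and differ only on the fixed finite set $k$.

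The main obstacle I anticipate is verifying the second bulleted hypothesis of Lemma \ref{joinpieces}, i.e. that $\frac{|W \cap k|}{|Z \cap k|}$ and its reciprocal are close to $1$ — this is simply false in general for the original finite set $Z \subseteq k$, so the real content is to reorganize the decomposition of $Z_l$ so that the "initial segment" over which we compare is taken far enough out that $Z_l$ and $W$ are nearly equal there. Concretely, one picks an auxiliary level $m$ (depending on $\epsilon'$) large enough that $\frac{|W \cap m|}{|Z_m \cap m|}$ is within $\epsilon'$ of $1$ (possible since $|Z_m \cap m| = |Z| + |W \cap [k,m)|$ and $|W \cap m| = |W \cap k| + |W \cap [k,m)|$, and $|W \cap [k,m)| \to \infty$ while $|Z|, |W \cap k|$ stay bounded), large enough that all independence ratios are within $\epsilon'$ of $1$ on $[m, \infty)$, and large enough that $d_l(W) \in (\delta, 1-\delta)$ for $l \ge m$; then Lemma \ref{joinpieces} applied with parameters $(m, Z_m, W, \epsilon')$ gives the conclusion for all $l \ge m$ with error $15\epsilon' \le \epsilon$, and the finitely many $l \in [k, m)$ are absorbed by enlarging $k'$ using Corollaries \ref{Moderate union of intervals} and \ref{density union of intervals} as above. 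Setting $k' = m$ (after this enlargement) completes the proof.
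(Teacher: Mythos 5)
The paper's own proof is a short, direct two-line estimate: using that $Z\subseteq k$ is a fixed finite set, one bounds
\[
\frac{d_{l}(\bigcap_{i \in E} B_i \cap Z_l)}{\prod_{i\in E}d_{l}(B_i)\,d_{l}(Z_l)}
\]
from above by $\frac{k(1+\epsilon/2)}{|\bigcap_{i\in E} B_i \cap W \cap [k,l)|}+1+\tfrac{\epsilon}{2}$ and from below by $(1-\tfrac{\epsilon}{2})\frac{1}{1+k/|W\cap[k,l)|}$, using only that the $*$-independence ratio for $W$ (restricted to $[k,l)$) is eventually within $\epsilon/2$ of $1$; both bounds tend to the right values because $|W\cap[k,l)|$ and $|\bigcap_{i\in E} B_i \cap W\cap[k,l)|$ tend to infinity. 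Lemma~\ref{joinpieces} is not used at all. Your approach — reparametrizing to the level $m$ set $Z_m=Z\cup(W\cap[k,m))$ so that the ``size at level $m$'' ratio in the second bullet of Lemma~\ref{joinpieces} is close to $1$, and then invoking Lemma~\ref{joinpieces} with base $(m,Z_m,W,\epsilon')$ — is a workable plan, but as written it has a genuine gap and a misapplication.

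The gap: when you apply Lemma~\ref{joinpieces} with parameters $(m, Z_m, W, \epsilon')$, you list the two bulleted hypotheses (independence ratios of $W$ past $m$, and the comparison $|W\cap m|/|Z_m\cap m|$) but you never verify the un-bulleted main hypothesis, namely that
\[
\frac{d_{m}(\bigcap_{i \in E} B_i \cap Z_m)}{\prod_{i\in E}d_{m}(B_i)\,d_{m}(Z_m)} \in (1-\epsilon', 1+\epsilon')
\]
for all $E\subseteq n$. This is not automatic and is not one of the things your choice of $m$ is stated to control. It does hold for $m$ large, but establishing it requires exactly the sort of additive-constant estimate the paper does directly (write $|\bigcap B_i\cap Z_m\cap m|$ and $|Z_m\cap m|$ as the corresponding $W$-quantities plus fixed constants and use that $|W\cap[k,m)|\to\infty$). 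Once you have that computation, it already gives the lemma's conclusion for all large $l$, so the detour through Lemma~\ref{joinpieces} becomes unnecessary.

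The misapplication: you invoke Corollary~\ref{density union of intervals} with ``target density $q=\prod_{i\in E}d(B_i)\,d(W)$,'' but $*$-independence only guarantees that $W$ and the $B_i$ are \emph{moderate}; it does not imply that $d(W)$ or $d(B_i)$ exist. Corollary~\ref{density union of intervals} explicitly requires $d(W)=q$, so it cannot be applied here. (Corollary~\ref{Moderate union of intervals} is fine — it only needs moderateness — but it controls $d_l(Z_l)$, not the independence ratio you need.) Relatedly, the ``brute force for $l\in[k,m]$'' paragraph cannot work as stated: for $l$ near $k$ there is no hypothesis on $Z$ forcing the ratio to be near $1$, and none is needed, since you are free to take $k'\geq m$. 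Dropping that paragraph, verifying the main hypothesis of Lemma~\ref{joinpieces} directly, and removing the appeal to Corollary~\ref{density union of intervals} would give a correct (if roundabout) proof.
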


\begin{proof}
First, note that
\[
\displaystyle \lim_{l \to \infty} \frac{d_{l}(\bigcap_{i \in E} B_i \cap W)}{\prod_{i\in E}d_{l}(B_i)d_{l}(W)}  = \displaystyle \lim_{l \to \infty} \frac{ |\bigcap_{i \in E} B_i \cap W \cap [k, l)| }{\prod_{i\in E}d_{l}(B_i) |W \cap [k, l)|}
\]
for any $E\subseteq n$. Therefore, there is $N\in \omega$ such that for all $E\subseteq n$ and $l>N$ we have
\[
\displaystyle  \frac{|\bigcap_{i \in E} B_i \cap W \cap [k,l)|}{\prod_{i\in E}d_{l}(B_i) |W \cap [k, l)|} \in \left( 1-\frac{\epsilon}{2}, 1+ \frac{\epsilon}{2} \right)
\]
Choose some $k^\prime> N$. For the upper bound, we have
\[
\begin{array}{ll}
   \displaystyle  \frac{d_{k^\prime}(\bigcap_{i \in E} B_i \cap Z_{k^\prime})}{\prod_{i\in E}d_{k^\prime}(B_i)d_{k^\prime}(Z_{k^\prime})} & \leq \displaystyle\frac{ k + |\bigcap_{i \in E} B_i \cap W \cap [k,k^\prime)| }{\prod_{i\in E}d_{k^\prime}(B_i) |W \cap [k, k^\prime)|}    \\ \\
     & \leq \displaystyle \frac{k(1+ \frac{\epsilon}{2})}{|\bigcap_{i \in E} B_i \cap W \cap [k, k^\prime)|}   + 1+ \frac{\epsilon}{2}
\end{array}
\]
For the lower bound, we have
\[
\begin{array}{ll}
   \displaystyle  \frac{d_{k^\prime}(\bigcap_{i \in E} B_i \cap Z_{k^\prime})}{\prod_{i\in E}d_{k^\prime}(B_i)d_{k^\prime}(Z_{k^\prime})} & \geq \displaystyle\frac{ |\bigcap_{i \in E} B_i \cap W \cap [k, k^\prime)| }{\prod_{i\in E}d_{k^\prime}(B_i) (k+|W \cap [k, k^\prime)|)}    \\ \\
     & \geq  \displaystyle\frac{ (1- \frac{\epsilon}{2}) \prod_{i\in E}d_{k^\prime}(B_i) ||W \cap [k, k^\prime)| }{\prod_{i\in E}d_{k^\prime}(B_i) (k+|W \cap [k, k^\prime)|)} \\ \\
     &= \displaystyle \left(1-\frac{\epsilon}{2}\right) \frac{1}{1+\frac{k}{|W \cap [k, k^\prime)|}}
\end{array}
\]
Since the collection of subsets of $  \{0, ..., n-1\}$ is finite, we can choose $k^\prime$ sufficiently large as to guarantee $(1-\frac{\epsilon}{2}) \frac{1}{1+\frac{k}{|W \cap [k, k^\prime)|}}\geq 1-\epsilon$  and $\frac{k(1+ \epsilon/2)}{|\bigcap_{i \in E} B_i \cap W \cap [k, k^\prime)|} \leq \frac{\epsilon}{2}$ for all $E \subseteq n$.

\end{proof}

Finally, the following lemmas will help us to construct a finite set that satisfies the conditions of all aforementioned lemmas.

\begin{lemma}\label{lemmaabindependetfamilies}
Let $\mathcal{B}=\{B_i\}_{i< n}$ be an $*$-independent family. Then, for every rational number $0<q<1$ there exists an infinite set $X_q$ such that
\begin{itemize}
    \item $d(X_q)=q$
    \item $\mathcal{B} \cup \{X_q\}$ is an $*$-independent family.
\end{itemize}
\end{lemma}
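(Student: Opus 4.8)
The plan is a direct construction that bypasses the recursive machinery set up above. Write $n=|\mathcal B|$, $\mathcal B=\{B_i\}_{i<n}$, and for $F\subseteq n$ let $C_F:=\bigcap_{i\in F}B_i\cap\bigcap_{i\in n\setminus F}(\omega\setminus B_i)$ be the atoms of the Boolean algebra generated by $\mathcal B$; these partition $\omega$, and for each $E\subseteq n$ one has $B_E:=\bigcap_{i\in E}B_i=\bigsqcup_{E\subseteq F\subseteq n}C_F$. First I would record that each $B_E$ has positive lower density: since every $B_i$ is moderate, $d_m(B_i)>\underline{d}(B_i)/2>0$ for all large $m$, so $\prod_{i\in E}d_m(B_i)$ stays above a fixed positive constant for large $m$; and since $\mathcal B$ is $*$-independent, $d_m(B_E)/\prod_{i\in E}d_m(B_i)\to1$, so $d_m(B_E)$ stays above a fixed positive constant for large $m$ as well. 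In particular $|B_E\cap m|\to\infty$ for every $E\subseteq n$.

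Next I would define $X_q$ by placing inside each atom a subset of relative density exactly $q$, spread out via Beatty‑type spacing: enumerating $C_F=\{c^F_0<c^F_1<\cdots\}$, declare $c^F_j\in X_q$ iff $\lfloor(j+1)q\rfloor>\lfloor jq\rfloor$. Then $|X_q\cap C_F\cap m|=\lfloor q\,|C_F\cap m|\rfloor$ for every $m$, hence for every $E\subseteq n$ and every $m$,
\[
\bigl|\,|X_q\cap B_E\cap m|-q\,|B_E\cap m|\,\bigr|\le 2^n ,
\]
and dividing by $|B_E\cap m|\to\infty$ yields $|X_q\cap B_E\cap m|/|B_E\cap m|\to q$ for every $E\subseteq n$.

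Finally I would conclude as follows. Taking $E=\emptyset$ gives $d_m(X_q)\to q$, so $d(X_q)=q\in(0,1)$ and $X_q$ is infinite and moderate. For a finite $\mathcal F\subseteq\mathcal B\cup\{X_q\}$: if $X_q\notin\mathcal F$ the ratio $d_m(\bigcap\mathcal F)/\prod_{G\in\mathcal F}d_m(G)$ converges to $1$ because $\mathcal B$ is $*$-independent; if $\mathcal F=\{X_q\}\cup\{B_i:i\in E\}$, then
\[
\frac{d_m\!\bigl(\textstyle\bigcap\mathcal F\bigr)}{\prod_{G\in\mathcal F}d_m(G)}=\frac{|X_q\cap B_E\cap m|}{|B_E\cap m|}\cdot\frac{1}{d_m(X_q)}\cdot\frac{d_m(B_E)}{\prod_{i\in E}d_m(B_i)},
\]
whose three factors converge to $q$, $1/q$, and $1$ respectively, so the product converges to $1$. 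Hence $\mathcal B\cup\{X_q\}$ is $*$-independent.

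Since no iteration is involved, I do not expect a genuine obstacle here; the two points needing care are (a) spreading the chosen elements inside each atom by Beatty spacing rather than taking ``the first $q$-fraction'', so that every initial segment $m$ --- not just convenient endpoints --- sees relative density close to $q$ inside each $B_E$; and (b) the observation that moderateness of the $B_i$ together with $*$-independence of $\mathcal B$ forces each $B_E$ to have positive lower density, which is precisely what makes the uniform error $2^n$ negligible. An alternative, matching the tools just proved, would be to build $X_q=\bigcup_k(X_q\cap m_k)$ recursively, using Corollary~\ref{density union of intervals} to keep $d_{m_k}(X_q)$ within $1/k$ of $q$ while distributing each atom's contribution evenly over $[m_k,m_{k+1})$.
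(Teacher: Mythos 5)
Your construction is essentially the paper's: both partition $\omega$ into the atoms $C_F=\bigcap_{i\in F}B_i\cap\bigcap_{i\notin F}(\omega\setminus B_i)$ of the Boolean algebra generated by $\mathcal{B}$, place a subset of relative density $q$ inside each atom, and take the union, then verify $*$-independence by the same three-factor density computation. The only difference is the device used to select a $q$-density subset of an infinite set: you use the Beatty rule $\lfloor(j+1)q\rfloor>\lfloor jq\rfloor$, whereas the paper writes $q=a/b$ and takes $a$ residue classes modulo $b$ from the increasing enumeration of each atom; your explicit uniform error bound $\bigl||X_q\cap B_E\cap m|-q|B_E\cap m|\bigr|\le 2^n$ and the preliminary remark that moderateness plus $*$-independence forces each $B_E$ to have positive lower density make the argument a touch more careful, but it is not a different route.
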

\begin{proof}
First, for each function $f:n \rightarrow \{-1,1\}$ we define
\[
\mathcal{B}_f := \bigcap_{i<n} B_i^{f(i)}
\]
where $B_i^1:=B_i$ and $B_i^{-1}:=\omega \setminus B_i$. Note that the collection $\{ \mathcal{B}_f \, | \, f \in \{-1,1\}^n \}$ is a partition of $\omega$. Fix a function $f \in \{-1,1\}^n$. Let $\{d_j\}_{j\in \omega}$ be an increasing enumeration  of $\mathcal{B}_f$. Let $\frac{a}{b}$ be the irreducible fraction of $q$, and let $A$ be a set containing exactly $a$ residue classes modulo $b$. Define 
\[
X_f := \{ d_j \, | \, j \in \bigcup A \}
\]
It is easy to show that $\displaystyle \lim_{l \to \infty} \frac{|X_f \cap l|}{|\mathcal{B}_f \cap l |} = \frac{a}{b}$. Now, define 
\[
X_q:= \bigcup_{ f \in \{-1,1\}^n } X_f
\]
Let us see that $X_q$ satisfies the conditions of the lemma. It suffices to show that for any $E \subseteq n$ we have
\[
\lim_{l \to \infty}\frac{|\bigcap_{i \in E} B_i \cap X_q \cap l |}{|\bigcap_{i \in E} B_i \cap l |} = \frac{a}{b}
\]
Let $E$ be a subset of $n$. Consider the set $G:=\{  f \in \{-1,1\}^n \,| \, f(i)=1 \text{ for all $i \in E$ } \} $. Note that
\[
\bigcap_{i \in E} B_i = \bigcup_{f \in G} \mathcal{B}_f
\]
Also, given $\epsilon>0$ there exists $N\in \omega$ such that for all $l>N$ we have
\[
\frac{|X_f \cap l|}{|\mathcal{B}_f \cap l |} \in \left( \frac{a}{b}- \epsilon, \frac{a}{b}+\epsilon \right)
\]
for all $f\in G$. Thus, for $l>N$ we get
\[
\begin{array}{ll}
   \displaystyle\frac{|\bigcap_{i \in E} B_i \cap X_q \cap l |}{|\bigcap_{i \in E} B_i \cap l |}  &\displaystyle= \sum_{f \in G} \frac{|\mathcal{B}_f \cap X_q \cap l|}{ |\bigcap_{i \in E} B_i \cap l |}  \\ \\
     &\displaystyle= \sum_{f \in G} \frac{|X_f \cap l|}{|\bigcap_{i \in E} B_i \cap l |} = \sum_{f \in G} \frac{|X_f \cap l|}{|\mathcal{B}_f \cap l |}\frac{|\mathcal{B}_f \cap l |}{|\bigcap_{i \in E} B_i \cap l |} \\ \\
     &\displaystyle\leq \left( \frac{a}{b} + \epsilon\right)\sum_{f \in G}\frac{|\mathcal{B}_f \cap l |}{|\bigcap_{i \in E} B_i \cap l |} = \left( \frac{a}{b} + \epsilon\right)
\end{array}    
\]
In a similar way we get $\left( \frac{a}{b} - \epsilon\right) \leq \frac{|\bigcap_{i \in E} B_i \cap X_q \cap l |}{|\bigcap_{i \in E} B_i \cap l |}$. 

\end{proof}

\begin{lemma}\label{equalizessizes}
Let $k\in \omega$, $\epsilon>0$ and $\epsilon_0>0$. Let $\mathcal{B}=\{B_i\}_{i< n}$ be an $*$-independent family and $Z$ a subset of $k$ such that for all $E \subseteq n$ we have
\[
\displaystyle  \frac{d_{k}(\bigcap_{i \in E} B_i \cap Z)}{\prod_{i\in E}d_{k}(B_i)d_{k}(Z)} \in \left( 1-\epsilon, 1+ \epsilon \right)
\]
Besides, for every $m \geq k$ there is a finite set $Z_m \subseteq m$ such that
\begin{itemize}
    \item $Z_m \cap k = Z$
    \item $\epsilon_0 \leq \frac{|Z_m \cap l|}{l} \leq 1- \epsilon_0 $ for all $k \leq l \leq m$.
    \item $\displaystyle  \frac{d_{l}(\bigcap_{i \in E} B_i \cap Z_m)}{\prod_{i\in E}d_{l}(B_i)d_{l}(Z_m)} \in \left( 1-\epsilon, 1+ \epsilon \right)$ for all $k \leq l \leq m$.
\end{itemize}
Let $W$ be an infinite subset of $\omega$ such that $\epsilon_0 \leq \frac{|W \cap l|}{l} \leq 1- \epsilon_0$ for all $l>k$. Then, there exists $k^\prime> k$ and a finite set $Z^\prime \subseteq k^\prime$ such that
\begin{itemize}
    \item $Z^\prime \cap k = Z$.
    \item $\displaystyle \frac{\epsilon_0^2}{2} \leq \frac{|Z^\prime \cap l|}{l} \leq 1- \frac{\epsilon_0^2}{2} $ for all $k \leq l < k^\prime$.
    \item $\displaystyle  \frac{d_{l}(\bigcap_{i \in E} B_i \cap Z^\prime)}{\prod_{i\in E}d_{l}(B_i)d_{l}(Z^\prime)} \in \left( 1-15\epsilon, 1+ 15\epsilon \right)$ for all $k \leq l < k^\prime$ and any $E \subseteq n$.
    \item $\displaystyle \epsilon_0 \leq \frac{|Z^\prime \cap k^\prime|}{k^\prime} \leq 1- \epsilon_0 $ 
    \item $\displaystyle  \frac{d_{k^\prime}(\bigcap_{i \in E} B_i \cap Z^\prime)}{\prod_{i\in E}d_{k^\prime}(B_i)d_{k^\prime}(Z^\prime)} \in \left( 1- \epsilon, 1+ \epsilon \right)$ for any $E \subseteq n$.
    \item $\displaystyle \frac{|Z^\prime \cap k^\prime|}{|W \cap k^\prime|}, \frac{|W \cap k^\prime|}{|Z^\prime \cap k^\prime|} \in (1 -\epsilon, 1 + \epsilon)$
\end{itemize}
\end{lemma}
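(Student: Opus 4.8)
The plan is to take for $Z'$ a restriction $X_q\cap k'$ of a single infinite set $X_q$ that has been engineered so that: (i) its initial segment below some large level $m_0$ is exactly the hypothesized good extension $Z_{m_0}$ (so that on $[k,m_0]$ the independence ratios are automatically $\epsilon$-good and the density is in $[\epsilon_0,1-\epsilon_0]$), (ii) its limiting density is a rational $q$ chosen to track $W$, and (iii) $\mathcal B\cup\{X_q\}$ is $*$-independent. Such an $X_q$ exists: Lemma \ref{lemmaabindependetfamilies} produces an $X_q$ with $d(X_q)=q$ and $\mathcal B\cup\{X_q\}$ $*$-independent, and finitely modifying it on $[0,m_0)$ to agree with $Z_{m_0}$ leaves $d(X_q)=q$ and the $*$-independence intact. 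The endpoint $k'$ is then pushed far enough that (a) the coating past $m_0$ has "forgotten" its initial segment, so its density and its ratios with $\mathcal B$ have stabilized, and (b) $|X_q\cap k'|$ lands within the factor $1\pm\epsilon$ of $|W\cap k'|$. Set $Z':=X_q\cap k'$.

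\textbf{Choosing $q$ and matching $W$ at $k'$.} Since $\epsilon_0\le d_l(W)\le 1-\epsilon_0$ for $l>k$, either $d(W)$ exists or $\underline d(W)<\bar d(W)$; accordingly pick a rational $q$ within the allowed slack $\epsilon\epsilon_0$ of $d(W)$, rounded towards $1/2$ so that $q\in(\epsilon_0,1-\epsilon_0)$, or pick $q$ strictly between $\underline d(W)$ and $\bar d(W)$. From the construction underlying Lemma \ref{lemmaabindependetfamilies} (equivalently, from Corollary \ref{density union of intervals}) one reads off $d_l(X_q)\to q$ with an error $O(1/l)$, so $|X_q\cap l|=ql+O(1)$ and hence $|Z'\cap k'|-|W\cap k'|=(q-d_{k'}(W))k'+O(1)$. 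This quantity has increments bounded by $1$ and changes sign for arbitrarily large $k'$ (because $q$ lies in the oscillation interval of $d_l(W)$, or agrees with $d(W)$ up to slack), so some large $k'$ makes it of size $O(1)$, i.e.\ $\bigl||Z'\cap k'|-|W\cap k'|\bigr|<\epsilon|W\cap k'|$ and $d_{k'}(Z')\in(\epsilon_0,1-\epsilon_0)$: this is bullets 4 and 6. At that same level, $\mathcal B\cup\{X_q\}$ being $*$-independent forces the ratio $\frac{d_{k'}(\bigcap_E B_i\cap Z')}{\prod_E d_{k'}(B_i)\,d_{k'}(Z')}$ into $(1-\epsilon,1+\epsilon)$ once $k'$ is large enough, which is bullet 5; and bullet 1 is immediate.

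\textbf{Ratio and moderacy on $[k,k')$.} On $[k,m_0]$ we have $Z'\cap l=Z_{m_0}\cap l$, so the $\epsilon$-ratio bound and the $[\epsilon_0,1-\epsilon_0]$-density bound are inherited from $Z_{m_0}$ (hence also the weaker $(1-15\epsilon,1+15\epsilon)$ and $(\epsilon_0^2/2,1-\epsilon_0^2/2)$ bounds). For $l\in(m_0,k')$ the density bound $d_l(Z')\in(\epsilon_0^2/2,1-\epsilon_0^2/2)$ follows from Lemma \ref{three times} (together with Lemma \ref{union of intervals} and Corollary \ref{Moderate union of intervals}) applied with $R:=Z_{m_0}$, $S:=X_q$, $m:=m_0$, using $d_{m_0}(Z_{m_0})\in[\epsilon_0,1-\epsilon_0]$ and $d_l(X_q)\in(\epsilon_0,1-\epsilon_0)$ for $l$ past stabilization — this is exactly where the degradation $\epsilon_0\rightsquigarrow\epsilon_0^2/2$ enters. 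For the independence ratios on $(m_0,k')$: past a finite threshold $k^\ast$ the ratio of $X_q$ with $\mathcal B$ is $\epsilon$-good by $*$-independence, while on the transition interval $(m_0,k^\ast)$ Lemma \ref{joinpieces} supplies the $(1-15\epsilon,1+15\epsilon)$ bound — the hypothesis $|X_q\cap m_0|/|Z_{m_0}\cap m_0|\in(1-\epsilon,1+\epsilon)$ needed there holds with value exactly $1$ by our prescription $X_q\cap m_0=Z_{m_0}$, once $m_0$ is chosen large relative to the convergence rates so that $X_q$'s ratios with $\mathcal B$ are already $\epsilon$-controlled on the levels in play.

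\textbf{Main obstacle.} The real difficulty is arranging the splice so that bullets 2--6 hold \emph{simultaneously} with the stated constants. The tension is structural: bullet 6 pins the coating's limiting density to $q\approx d(W)$, yet the splice at $m_0$ must be cardinality-seamless in order to run Lemma \ref{joinpieces} on the transition interval right after $m_0$ — the one stretch where the initial segment $Z_{m_0}$ is not yet negligible and the ratio is not automatically $\epsilon$-good. The ``prescribe $X_q\cap m_0=Z_{m_0}$'' device is precisely what reconciles these demands. The residual work — verifying the threshold $k^\ast$ is finite and uniform enough (using a bounded-denominator choice of $q$), bookkeeping how $\epsilon$ degrades to $15\epsilon$ through one application of Lemma \ref{joinpieces} and how $\epsilon_0$ degrades to $\epsilon_0^2/2$ through one application of Lemma \ref{three times}, and handling the boundary cases where $d(W)$ (or $\underline d(W),\bar d(W)$) sits near $\epsilon_0$ or $1-\epsilon_0$ — is routine but genuinely fiddly, and is carried out by mechanically tracking constants through Lemmas \ref{three times}, \ref{union of intervals}, \ref{joinpieces}, and \ref{stabilizepieces}.
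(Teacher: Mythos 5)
Your proposal does not close the gap it itself identifies as the main obstacle, and that obstacle is real. The heart of the difficulty is precisely the third hypothesis of Lemma \ref{joinpieces}: to glue a tail onto $Z_{m_0}$ at level $m_0$, you need $|X_q\cap m_0|/|Z_{m_0}\cap m_0|$ within a factor $1\pm\epsilon$ of $1$, while \emph{simultaneously} keeping the ratios of $X_q$ against $\mathcal B$ $\epsilon$-good on $[m_0,k']$ (the second hypothesis). With a single target density $q$ pinned by $W$, this cannot in general be arranged: the quantity $d_{m_0}(Z_{m_0})$ is whatever the hypothesis hands you — anywhere in $[\epsilon_0,1-\epsilon_0]$ — and need not be close to $q$. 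Your ``modify $X_q$ on $[0,m_0)$ to agree with $Z_{m_0}$'' device makes hypothesis 3 hold trivially (the ratio becomes exactly $1$), but it destroys hypothesis 2 on the transition interval just past $m_0$: the modified $X_q$ at level $l\in(m_0,k^*)$ is exactly $Z_{m_0}\cup(X_q^{\mathrm{orig}}\cap[m_0,l))$, and its independence ratio against $\mathcal B$ interpolates from $Z_{m_0}$'s ratio at $m_0$ (which is $\epsilon$-good) to $X_q^{\mathrm{orig}}$'s asymptotic $1$ (which is $\epsilon$-good) without any guarantee of staying $\epsilon$-good in between. Controlling exactly that interpolation is what Lemma \ref{joinpieces} is \emph{for}, so invoking it to certify the ratios of the modified $X_q$ only produces a $15\epsilon$ bound — and then gluing again against $Z_{m_0}$ compounds to $\approx 225\epsilon$, not the claimed $15\epsilon$ of bullet 3. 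The argument is circular where it matters.

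The paper resolves this tension by \emph{not} trying to hit $W$'s density in one step. It covers $[\epsilon_0,1-\epsilon_0]$ by a finite $\delta$-net, starts the target density $r_1$ at whatever $d_{k_0}(Z_{k_0})$ happens to be, glues $X_{r_1}$, waits for stabilization, and then, if $W$'s running density has not entered $(r_1-\delta,r_1+\delta)$, shifts to $r_2=r_1-\delta/2$ (or $+\delta/2$) and glues $X_{r_2}$ onto the stabilized extension. At each single glue, hypothesis 3 of Lemma \ref{joinpieces} holds because the two densities being matched differ by at most $\delta$, which was chosen so that the quotient lies in $(1\pm\epsilon)$; and hypothesis 2 holds by re-stabilizing before each glue. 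Crucially the set called ``$Z$'' in Lemma \ref{joinpieces} is refreshed at each stage (using hypothesis C6 of the lemma statement to obtain an $\epsilon$-good, not $15\epsilon$-good, prefix at the new cut point), so $\epsilon$ degrades to $15\epsilon$ only \emph{within} a transition window and never compounds across stages. Termination follows because $W$'s density is trapped in $[\epsilon_0,1-\epsilon_0]$ and the target moves monotonically by $\delta/2$. Your one-shot construction has no mechanism that replaces this; the issue is structural, not a matter of routine constant-tracking.
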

\begin{proof}
First, note that there exists some rational $\delta>0$ such that for any $q$ rational number such that $\epsilon_0 \leq q \leq 1- \epsilon_0$ we have
\[
1-\epsilon<\frac{q-\delta}{q+\delta},\frac{q+\delta}{q-\delta} < 1 + \epsilon
\]
In fact, it is easy to prove that any $\delta < \frac{\epsilon_0 \epsilon}{2+\epsilon}$ satisfies the previous inequality. Now, consider a finite collection of intervals with rational center $\mathcal{I}_0= \{( q_j - \delta, q_j + \delta) \}_{j < J}$ such that $\epsilon_0 < q_j < 1-\epsilon_0$ and $[\epsilon_0,1-\epsilon_0] \subseteq \bigcup \mathcal{I}_0 $. Let $X_{q_j}$ be the sets which are given by lemma \ref{lemmaabindependetfamilies}. Let $k_0 \in \omega$ such that for any $E \subseteq n$, $ j < J$ and for all $l \geq k_0$ we have
\begin{itemize}
    \item $ \displaystyle  \frac{d_{l}(\bigcap_{i \in E} B_i \cap X_{q_j})}{\prod_{i\in E}d_{l}(B_i)d_{l}(X_{q_j})},  \frac{\prod_{i\in E}d_{l}(B_i)d_{l}(X_{q_j})}{d_{l}(\bigcap_{i \in E} B_i \cap X_{q_j})} \in \left( 1- \epsilon, 1+ \epsilon \right) $
    \item $\displaystyle \frac{|X_{q_j}\cap l|}{l} \in \left( q_j - \delta, q_j + \delta \right) $
    \item $\displaystyle \epsilon_0 < \frac{|X_{q_j}\cap l|}{l} < 1-\epsilon_0 $
\end{itemize}
Besides, there is a finite set $Z_{k_0}\subseteq k_0$ such that
\begin{itemize}
    \item $Z_{k_0} \cap k = Z$
    \item $\epsilon_0 \leq \frac{|Z_{k_0} \cap l|}{l} \leq 1- \epsilon_0 $ for all $k \leq l \leq k_0$.
    \item $\displaystyle  \frac{d_{l}(\bigcap_{i \in E} B_i \cap Z_{k_0} )}{\prod_{i\in E}d_{l}(B_i)d_{l}(Z_{k_0} )} \in \left( 1-\epsilon, 1+ \epsilon \right)$ for all $k \leq l \leq k_0 $.
\end{itemize}
Also, there is some $j< J$ such that $\frac{|Z_{k_0} \cap k_0 |}{k_0} \in \left(q_j-\delta, q_j + \delta \right)$. Denote this rational as $r_1$. Then, we get
\[
1 - \epsilon \leq  \frac{r_1-\delta}{r_1+\delta} \leq \frac{|X_{r_1}\cap k_0 |}{ |Z_{k_0} \cap k_0 | } , \frac{|Z_{k_0} \cap k_0 |}{|X_{r_1}\cap k_0 |} \leq  \frac{r_1+\delta}{r_1-\delta} \leq 1 + \epsilon
\]
Denote $Z_{k_0} \cup (X_{r_1} \cap [k_0, l))$ by $Z_l^1$. Therefore, we can apply lemma \ref{three times}, corollary \ref{Moderate union of intervals}, corollary \ref{density union of intervals}, lemma \ref{joinpieces} and \ref{stabilizepieces} with $Z_{k_0}$ and $X_{r_1}$ to find $k_1$ such that 
\begin{itemize}
    \item $\displaystyle  \frac{d_{l}(\bigcap_{i \in E} B_i \cap Z_{l}^1)}{\prod_{i\in E}d_{l}(B_i)d_{l}(Z_{l}^1)} \in \left( 1-15\epsilon, 1+ 15\epsilon \right)$ for all $k_0 \leq l < k_1$.
    \item $\displaystyle \frac{\epsilon_0^2}{2} \leq \frac{|Z_l^1 \cap l|}{l} \leq 1- \frac{\epsilon_0^2}{2} $ for all $k \leq l < k_1$
    \item $\displaystyle  \frac{d_{l}(\bigcap_{i \in E} B_i \cap Z_{l}^1)}{\prod_{i\in E}d_{l}(B_i)d_{l}(Z_{l}^1)} \in \left( 1- \epsilon, 1+ \epsilon \right)$  for all $l \geq k_1$
    \item $\displaystyle \epsilon_0 \leq \frac{|Z_l^1 \cap l|}{l} \leq 1- \epsilon_0 $ for all $l \geq k_1$
    \item $\displaystyle \frac{|Z_{l}^1 \cap l|}{l} \in \left(r_1 -\delta, r_1 + \delta \right)$ for all $l \geq k_1$
\end{itemize}
If there is $l \geq k_1$ such that $\frac{|W \cap l|}{l}\in \left(r_1 -\delta, r_1 + \delta \right)$, then we have the result with $Z^\prime:= Z_l^1$ and $k^\prime=l$. Otherwise, there is $k_1^\prime>k_1$ such that 
\[
\frac{|W \cap l|}{l} < r_1 -\delta \text{ or }  \frac{|W \cap l|}{l} > r_1 +\delta
\]
for all $l\geq k_1^\prime$. Without loss of generality, we can assume that $\frac{|W \cap l|}{l} < r_1 -\delta$ for all $l>k_1^\prime$. Now, define $r_2:= r_1  - \frac{\delta}{2}$. Note that $r_2 >  \epsilon_0$ since $ \frac{|W \cap k_1^\prime|}{k_1^\prime} \geq \epsilon_0$. Consider the set $X_{r_2}$ given by the lemma \ref{lemmaabindependetfamilies}. Since there is $\delta^\prime$ such that $(r_1- \delta^\prime, r_1 + \delta^\prime) \subseteq \left(r_2 -\delta, r_2 + \delta \right)$, by corollary \ref{density union of intervals} there is some $k_1^{\prime\prime} \geq k_1^\prime$ such that for all $l \geq k_1^{\prime\prime}$
\[
\frac{|Z_{l}^1 \cap l|}{l} \in (r_1- \delta^\prime, r_1 + \delta^\prime) \subseteq \left(r_2 -\delta, r_2 + \delta \right)
\]
Besides, if $k_1^{\prime\prime}$ is large enough, we can ask that for any $E \subseteq n$ and for all $l \geq k_1^{\prime\prime}$ 
\begin{itemize}
    \item $ \displaystyle  \frac{d_{l}(\bigcap_{i \in E} B_i \cap X_{r_2})}{\prod_{i\in E}d_{l}(B_i)d_{l}(X_{r_2})},  \frac{\prod_{i\in E}d_{l}(B_i)d_{l}(X_{r_2})}{d_{l}(\bigcap_{i \in E} B_i \cap X_{r_2})} \in \left( 1- \epsilon, 1+ \epsilon \right) $
    \item $\displaystyle \frac{|X_{r_2}\cap l|}{l} \in \left( r_2 - \delta, r_2 + \delta \right) $
    \item $\displaystyle \epsilon_0 < \frac{|X_{r_2}\cap l|}{l} < 1-\epsilon_0 $ 
\end{itemize}
Then, we get
\[
1 - \epsilon \leq  \frac{r_2-\delta}{r_2+\delta} \leq \frac{|X_{r_2}\cap k_1^{\prime\prime} |}{ |Z_{k_1^{\prime\prime}}^1 \cap k_1^{\prime\prime} | } , \frac{|Z_{k_1^{\prime\prime}}^1 \cap k_1^{\prime\prime} |}{|X_{r_2}\cap k_1^{\prime\prime} |} \leq  \frac{r_2+\delta}{r_2-\delta} \leq 1 + \epsilon
\]
Denote $Z_{k_1^{\prime\prime}}^1 \cup (X_{r_2} \cap [k_1^{\prime\prime}, l))$ by $Z_l^2$. Therefore, we can apply again lemma \ref{three times}, corollary \ref{Moderate union of intervals}, corollary \ref{density union of intervals}, lemma \ref{joinpieces} and \ref{stabilizepieces} with $Z_{k_1^{\prime\prime}}^1 $ and $X_{r_2}$ to find $k_2$ such that 
\begin{itemize}
    \item $\displaystyle  \frac{d_{l}(\bigcap_{i \in E} B_i \cap Z_{l}^2)}{\prod_{i\in E}d_{l}(B_i)d_{l}(Z_{l}^2)} \in \left( 1-15\epsilon, 1+ 15\epsilon \right)$ for all $k_1^{\prime\prime} \leq l < k_2$.
    \item $\displaystyle \frac{\epsilon_0^2}{2} \leq \frac{|Z_l^2 \cap l|}{l} \leq 1- \frac{\epsilon_0^2}{2} $ for all $k_1^{\prime\prime} \leq l < k_2$
    \item $\displaystyle  \frac{d_{l}(\bigcap_{i \in E} B_i \cap Z_{l}^2)}{\prod_{i\in E}d_{l}(B_i)d_{l}(Z_{l}^2)} \in \left( 1- \epsilon, 1+ \epsilon \right)$  for all $l \geq k_2$
    \item $\displaystyle \epsilon_0 \leq \frac{|Z_l^2 \cap l|}{l} \leq 1- \epsilon_0 $ for all $l \geq k_2$
    \item $\displaystyle \frac{|Z_{l}^2 \cap l|}{l} \in \left(r_2 -\delta, r_2 + \delta \right)$ for all $l \geq k_2$
\end{itemize}
If there is $l \geq k_2$ such that $\frac{|W \cap l|}{l}\in \left(r_2 -\delta, r_2 + \delta \right)$, then we have the result with $Z^\prime:= Z_l^2$ and $k^\prime=l$. Otherwise, $\frac{|W \cap l|}{l} < r_2 -\delta $ for all $l \geq k_2$. Then, define $r_3= r_2 - \frac{\delta}{2}$ and repeat the previous step. This construction has an ending because $\epsilon_0 \leq \frac{|W \cap l|}{l} \leq 1- \epsilon_0$ for all $l>k$.

\end{proof}

Now, we are ready to begin the proof.

\begin{theorem}
Assume CH in the ground model and let $\kappa\geq\aleph_2$ with $\kappa=\kappa^{\aleph_0}$. Let $\mathbb{C}$ be the Cohen forcing, then $V^{\mathbb{C}_{\kappa}} \models$ ``There is a maximal $*$-independent family of size $\aleph_1$ ''.
\end{theorem}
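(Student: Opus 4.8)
The plan is to build the maximal $*$-independent family once and for all in the ground model and make it indestructible under adding Cohen reals. Write $\mathbb{C}$ for Cohen forcing. It suffices to construct in $V$ a $*$-independent family $\mathcal{I}=\{I_\alpha:\alpha<\omega_1\}$ such that $\Vdash_{\mathbb{C}}$ ``$\mathcal{I}$ is a maximal $*$-independent family''. Indeed, in $V^{\mathbb{C}_\kappa}$ the family $\mathcal I$ is still $*$-independent (moderateness of a fixed set and the limit conditions $\lim_n d_n(\bigcap\mathcal F)/\prod d_n(E)=1$ for finite $\mathcal F\subseteq\mathcal I$ are arithmetic in the reals of $\mathcal I$, hence absolute), and $|\mathcal I|=\aleph_1<\kappa=2^{\aleph_0}$; if some $X\in V^{\mathbb{C}_\kappa}\cap[\omega]^\omega$ had $\mathcal I\cup\{X\}$ $*$-independent, then by ccc there is a countable $a\subseteq\kappa$ with $X\in V^{\mathbb{C}_a}$, and since $\mathbb{C}_a$ is (isomorphic to) $\mathbb{C}$ and ``$\mathcal I\cup\{X\}$ is $*$-independent'' is absolute between $V^{\mathbb{C}_a}$ and $V^{\mathbb{C}_\kappa}$, this contradicts the displayed forcing statement. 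So everything reduces to the single Cohen step.

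To build $\mathcal{I}$ I would run a recursion of length $\omega_1$ keeping $\mathcal I_\alpha:=\{I_\beta:\beta<\alpha\}$ a countable $*$-independent family (its union over the whole recursion is then $*$-independent, as every finite subfamily already lies in some $\mathcal I_\alpha$). Using CH, fix an enumeration $\langle(\dot X_\alpha,p_\alpha):\alpha<\omega_1\rangle$ of all pairs consisting of a nice $\mathbb{C}$-name for an element of $[\omega]^\omega$ (only $\aleph_1$ many, as $\mathbb{C}$ is countable) and a condition $p_\alpha\in\mathbb{C}$, each pair appearing cofinally often; the trivial names handle ground-model sets, i.e.\ plain maximality in $V$. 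At stage $\alpha$: if $p_\alpha$ does not force ``$\dot X_\alpha$ is moderate'' put $I_\alpha$ to be any set keeping $\mathcal I_\alpha\cup\{I_\alpha\}$ $*$-independent (a non-moderate real is never a threat). Otherwise pass to some $q_\alpha\le p_\alpha$ deciding a witness, $q_\alpha\Vdash\forall n\ge N_\alpha\,(\epsilon_\alpha\le d_n(\dot X_\alpha)\le 1-\epsilon_\alpha)$ for fixed $\epsilon_\alpha>0,N_\alpha$, and construct $I_\alpha$ so that (i) $\mathcal I_\alpha\cup\{I_\alpha\}$ is $*$-independent, and (ii) for every $M$ the set $\{r\le q_\alpha:\exists n>M\ r\Vdash d_n(I_\alpha\cap\dot X_\alpha)/(d_n(I_\alpha)d_n(\dot X_\alpha))\notin(1-\delta_\alpha,1+\delta_\alpha)\}$ is dense below $q_\alpha$, where $\delta_\alpha:=\epsilon_\alpha/(2(1-\epsilon_\alpha))$. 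By (ii), $q_\alpha\Vdash$ ``that ratio does not converge to $1$'', hence $q_\alpha\Vdash$ ``$\mathcal I\cup\{\dot X_\alpha\}$ is not $*$-independent'' (a failure of the limit for the finite subfamily $\{I_\alpha,\dot X_\alpha\}$ persists as $\mathcal I$ grows). Since every pair occurs cofinally, for every nice name $\dot Y$ the set of conditions forcing ``$\dot Y$ not moderate or $\mathcal I\cup\{\dot Y\}$ not $*$-independent'' is dense, which is exactly $\Vdash_{\mathbb{C}}$ ``$\mathcal I$ is maximal''.

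The construction of $I_\alpha$ is where all the density lemmas are used. I would build $I_\alpha=\bigcup_j (I_\alpha\cap[k_j,k_{j+1}))$ block by block along a rapidly increasing $\langle k_j\rangle$. On ``good'' blocks $I_\alpha$ follows one of the auxiliary sets $X_q$ of Lemma \ref{lemmaabindependetfamilies} (for which $\mathcal I_\alpha\cup\{X_q\}$ is $*$-independent); Lemma \ref{equalizessizes}, which rests on the interval estimates of Lemmas \ref{three times}, \ref{union of intervals}, \ref{Moderate union of intervals}, \ref{density union of intervals} and the gluing Lemmas \ref{joinpieces}, \ref{stabilizepieces}, lets me splice successive finite approximations so that $d_n(I_\alpha)$ stays inside a fixed interval bounded away from $0$ and $1$ (so $I_\alpha$ is moderate) and so that $d_n(\bigcap_{E\in\mathcal F}E\cap I_\alpha)/(\prod_{E\in\mathcal F}d_n(E)\,d_n(I_\alpha))\to 1$ for each finite $\mathcal F\subseteq\mathcal I_\alpha$ — this yields (i). The new ingredient is a sparse, cofinal set of ``sabotage'' blocks: given $r\le q_\alpha$ and $M$, choose a pending sabotage block $[k_j,k_{j+1})$ with $k_j>M$ far enough out that $r$ does not yet constrain the relevant part of $\dot X_\alpha$; since $q_\alpha$ forces $d_{k_{j+1}}(\dot X_\alpha)\ge\epsilon_\alpha$, extend $r$ to $r'$ deciding $\dot X_\alpha\cap k_{j+1}$ and arrange, already when defining $I_\alpha$, that $I_\alpha\cap[k_j,k_{j+1})$ equals the relevant part of $\dot X_\alpha^{r'}$ — a set which, being literally a chunk of $\dot X_\alpha$, has density in $[\epsilon_\alpha,1-\epsilon_\alpha]$ and carries enough independence over $\mathcal I_\alpha$ to fit the good-block bookkeeping once washed out by later blocks. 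Then $r'\Vdash I_\alpha\cap\dot X_\alpha\cap[k_j,k_{j+1})=I_\alpha\cap[k_j,k_{j+1})$, and since the block dominates everything before it, $r'$ forces the ratio at $k_{j+1}$ to be $\ge 1/(1-\epsilon_\alpha)-o(1)\ge 1+\delta_\alpha$; this gives (ii).

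The main obstacle is precisely this interleaving on the sabotage blocks: the good-block machinery requires each new piece to sit so far out that the earlier distortion has become negligible for every finite subfamily of $\mathcal I_\alpha$ (the quantitative content of Lemmas \ref{three times}--\ref{equalizessizes}), while the sabotage requires the block of $I_\alpha$ to be forced inside $\dot X_\alpha$. Reconciling the two means checking that the copied chunk $\dot X_\alpha^{r'}$ can always be taken $*$-independent-correct over $\mathcal I_\alpha$ — here moderateness of $\dot X_\alpha$ is essential, since if $\dot X_\alpha$ resisted every such copying it would already be forced to correlate with some member of $\mathcal I_\alpha$, hence already forced not to extend $\mathcal I$ — and that the sabotage must be secured densely rather than below a single master condition, because Cohen forcing has no lower bounds for infinite descending chains, which is why (ii) is phrased per $M$. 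The bookkeeping that makes $(\dot X_\alpha,q_\alpha)$ recur cofinally for every relevant $q_\alpha$ is what turns this local sabotage into genuine forced maximality.
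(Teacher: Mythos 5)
Your proposal takes essentially the same route as the paper: reduce to a single Cohen step, build the family of length $\omega_1$ in the ground model under CH by enumerating all $(p_\alpha,\dot X_\alpha)$, and construct each new $I_\alpha$ block-by-block using the good/sabotage interleaving supported by the interval and gluing lemmas, so that on a dense set of conditions the ratio $d_n(I_\alpha\cap\dot X_\alpha)/(d_n(I_\alpha)d_n(\dot X_\alpha))$ is forced to escape a neighbourhood of $1$ cofinally. The paper phrases the sabotage condition as forcing the set-equality $A_\alpha\cap I_n=\dot X_\alpha\cap I_n$ on dominating intervals and then derives the ratio deviation; you force the ratio deviation directly, which is an equivalent packaging.

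There is, however, one place where your account would break if read literally. You filter the stages only by ``$p_\alpha$ forces moderateness'' and then claim that moderateness is what makes the copied chunk of $\dot X_\alpha$ compatible with the $*$-independence bookkeeping over $\mathcal I_\alpha$. That is not enough: moderateness controls $d_n(\dot X_\alpha)$ alone, and says nothing about the joint ratios $d_n(\dot X_\alpha\cap\bigcap_{i\in E}B_i)/\bigl(\prod_{i\in E}d_n(B_i)\,d_n(\dot X_\alpha)\bigr)$ that your good-block machinery (the analogue of Lemmas \ref{joinpieces} and \ref{stabilizepieces}) must keep close to $1$ on the copied block. The paper makes the construction conditional on the stronger hypothesis that $p_\alpha$ also forces ``$\{A_\beta:\beta<\alpha\}\cup\{\dot X_\alpha\}$ is $*$-independent''; this is exactly what guarantees that, after strengthening and going far enough out, the decided chunk of $\dot X_\alpha$ is as $*$-independent-correct over $\mathcal I_\alpha$ as one wishes, and it is harmless to assume because in the maximality argument the name one ultimately cares about is forced $*$-independent over the whole family, hence in particular over $\mathcal I_\alpha$. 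Your informal remark that ``if $\dot X_\alpha$ resisted every such copying it would already be forced to correlate with some member of $\mathcal I_\alpha$'' gestures at this, but as written it is a claim rather than an argument; the clean fix is to build that $*$-independence hypothesis into the filter exactly as the paper does, and then the rest of your plan goes through.
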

\begin{proof}

It suffices to consider what happens when forcing with just $\mathbb{C}=$ Fn$(2,\omega)$. Let $\mathcal{A}_0=\{A_n \subseteq [\omega]^\omega \, | \, n \in \omega \}$ be an $*$-independent family. Fix an enumeration $\{(p_\alpha, \dot{X}_{\alpha}) \, | \, \omega \leq \alpha \leq \omega_1 \}$ of all pairs $(p, \dot{X})$ such that $p\in \mathbb{C}$ and $\dot{X}$ is a nice name for a subset of $\omega$. Note that since $V\models$ CH, there are just $\aleph_1$ many nice names for subsets of $\omega$ in $V$.

We now construct a maximal $*$-independent family $\mathcal{A}$ from $\mathcal{A}_0$ as follows. Let $\omega \leq \alpha < \omega_1$ and assume we have already defined set $A_\beta$ for all $\beta<\alpha$. Below, we will construct $A_\alpha \subseteq \omega$ such that the following two properties hold:
\begin{enumerate}[(i)]
    \item The family $\{A_\beta \, | \, \beta \leq \alpha \}$ is $*$-independent.
    \item  If there is a rational number $\epsilon_0>0$ and $N\in \omega$ in the ground model such that 
    \[
    \text{$p_\alpha \Vdash $``$\epsilon_0 < d_l(\dot{X}_\alpha) < 1-\epsilon_0$ for all $l\geq N$, and  $\{A_\beta \, | \, \beta < \alpha \} \cup \{ \dot{X}_\alpha \}$ is $*$-independent'',}
    \]
    then there exists $\epsilon^\prime < \epsilon_0$ and a sequence of intervals of $\omega$, $\mathcal{I}_\alpha=\{ I_j \}_{j \in \omega}$, with $\max I_j < \min I_{j+1}$, such that 
    \begin{enumerate}
        \item $|I_n \cap A_\alpha| > (1-\epsilon^\prime)| (\max I_n +1) \cap A_\alpha|$.
        \item For all $m\in \omega$ the set 
    \[
    D^\alpha_{m}:=\{ q \in \mathbb{C} \, | \, \exists n \geq m : q \Vdash A_\alpha \cap I_n = \dot{X}_{\alpha} \cap I_n \}     
    \]
    is dense below $p_\alpha$.
    \end{enumerate}

\end{enumerate}

We first show that the family $\mathcal{A}=\{A_\beta \, | \, \beta < \omega_1 \}$ constructed this way is a maximal $*$-independent family in $V^{\mathbb{C}}$. Clearly, $\mathcal{A}$ is an $*$-independent family. Suppose it were not maximal. Then, there is a condition $p$ and a nice name $\dot{X}$ for a subset of $\omega$ such that
\[
p \Vdash \text{``$\mathcal{A} \cup \{\dot{X}\}$ is $*$-independent'' }
\]
By strengthening $p$, we can assume that there is a rational number $\epsilon_0$ and $N\in \omega$ such that 
\[
p \Vdash  \epsilon_0 < d_l(\dot{X}) < 1-\epsilon_0
\]
for $l \geq N$. This is possible since $\Vdash$ ``$\dot{X}$ is moderate''. Let $\alpha$ be such that $(p,\dot{X})=(p_\alpha, \dot{X}_\alpha)$, so 
\[
p_\alpha \Vdash \text{``$ \{ A_\alpha, \dot{X}_\alpha\}$ is $*$-independent'' }
\]
By property (ii), there exist $\epsilon^\prime < \epsilon_0$ and a sequence of intervals of $\omega$, $\mathcal{I}_\alpha=\{ I_n \}_{n \in \omega}$, such that $|I_n \cap A_\alpha| > (1-\epsilon^\prime)| (\max I_n +1) \cap A_\alpha|$. Let $\epsilon= \frac{1-\epsilon^\prime}{1-\epsilon_0} - 1$. Find a condition $q\leq p_\alpha$ and $M \in \omega$ such that for all $n\geq M$  
\begin{equation}\label{contradiccion}
q\Vdash \frac{d_n(A_\alpha \cap \dot{X}_\alpha) }{d_n(A_\alpha) \cdot d_n(\dot{X}_\alpha) } \in ( 1- \epsilon, 1+ \epsilon )    
\end{equation}
Consider an interval $I_m$ such that $\min I_m > N, M$. Now by the density of $D^\alpha_{m}$ we can find $r \leq q$ and some $n\geq m$ such that $r \Vdash A_\alpha \cap I_n = \dot{X}_{\alpha} \cap I_n$. Denote $j$ as $\max I_n +1$. We have 
\[
\begin{array}{ll}
  r \Vdash \frac{d_{j}(A_\alpha \cap \dot{X}_\alpha) }{ d_{j}(\dot{X}_\alpha) \cdot d_{j}(A_\alpha) } &= \frac{|A_\alpha \cap \dot{X}_\alpha \cap j|}{ \frac{|\dot{X}_\alpha \cap j | }{j} \cdot |A_\alpha \cap j  | }  \\
     &\geq \frac{|A_\alpha \cap \dot{X}_\alpha \cap j|}{  (1-\epsilon_0) \cdot |A_\alpha \cap j  | }  \\
     & \geq  \frac{|A_\alpha \cap \dot{X}_\alpha \cap I_n|}{  (1-\epsilon_0) \cdot |A_\alpha \cap j  | } = \frac{|A_\alpha \cap I_n|}{  (1-\epsilon_0) \cdot |A_\alpha \cap j  | } > \frac{1-\epsilon^\prime}{1- \epsilon_0} = 1 + \epsilon
\end{array}
\]
which contradicts Eq.( \ref{contradiccion}).

\hspace{2mm}

Now, we are going to show that we can construct such an $A_\alpha$ satisfying the properties (i) and (ii) for $\omega \leq \alpha < \omega_1$. We only have to consider those $\alpha$ such that $\dot{X}_\alpha$ satisfies the assumption in property (ii), since finding an $A_\alpha$ with property (i) is straightforward. Therefore, assume that there is a rational number $\epsilon_0>0$ and $N\in \omega$ in the ground model such that 
    \[
    \text{$p_\alpha \Vdash $``$\epsilon_0 < d_l(\dot{X}_\alpha) < 1-\epsilon_0$ for all $l\geq N$, and  $\{A_\beta \, | \, \beta < \alpha \} \cup \{ \dot{X}_\alpha \}$ is $*$-independent'',}
    \]
and consider $\epsilon^\prime < \epsilon_0$. Let $\{ B_n \, | \, n<\omega\}$  be an enumeration of $\{A_\beta \, | \, \beta < \alpha \}$. We further pick some strictly decreasing sequence of real numbers $\{\delta_n\}_{n\in\omega}$ with $\delta_0<1$ and $\lim_{n \to \infty} \delta_n =0$, and let $\{q_n \, | \, n \in \omega\}$ be some sequence enumerating all conditions below $p_\alpha$ infinitely often. By induction on $n<\omega$, we will construct conditions $r_n \leq q_n^\prime \leq q_n$, a strictly increasing of natural numbers $\{k_n\}_{n \in \omega}$ and initial segments $Z_n=A_\alpha \cap k_n$ of $A_\alpha$ such that the following statements will hold
\begin{multicols}{2}
\begin{description}

  \item[$*$-Independence:] 
  
  \item[C1.]  $\displaystyle  \frac{d_{k_n}(\bigcap_{i \in E} B_i \cap Z_n)}{\prod_{i\in E}d_{k_n}(B_i)d_{k_n}(Z_n)} \in \left( 1-\frac{\delta_n}{15}, 1+ \frac{\delta_n}{15}  \right)$

  for all $E \subseteq \{0, ..., n \}$.

  \item[C2.] $\displaystyle q_n^\prime \Vdash \displaystyle  \frac{d_{l}(\bigcap_{i \in E} B_i \cap \dot{X}_\alpha)}{\prod_{i\in E}d_{l}(B_i)d_{l}(\dot{X}_\alpha)} \in \left( 1-\frac{\delta_n}{15}, 1+ \frac{\delta_n}{15}  \right)$  
  
   $\displaystyle q_n^\prime \Vdash \displaystyle  \frac{\prod_{i\in E}d_{l}(B_i)d_{l}(\dot{X}_\alpha)}{d_{l}(\bigcap_{i \in E} B_i \cap \dot{X}_\alpha)} \in \left( 1-\frac{\delta_n}{15}, 1+ \frac{\delta_n}{15}  \right)$ 
  
  for all $l\geq k_n$ and $E \subseteq \{0, ..., n \}$.

  \item[C3.] $\displaystyle  \frac{d_{l}(\bigcap_{i \in E} B_i \cap Z_{n+1})}{\prod_{i\in E}d_{l}(B_i)d_{l}(Z_{n+1})} \in \left( 1-\delta_n, 1+ \delta_n \right)$ 
  
  for all $k_n \leq l \leq k_{n+1}$ and $E \subseteq \{0, ..., n \}$. 
\end{description}

\columnbreak

\begin{description}

  \item[Moderate:] 
  
  \item[C4.]  $ \epsilon^\prime \leq \displaystyle d_{k_n}(Z_n) \leq 1-\epsilon^\prime $.  

  \item[C5.] $ \frac{(\epsilon^\prime)^2}{2}  \leq \displaystyle d_{l}(Z_{n+1}) \leq 1-\frac{(\epsilon^\prime)^2}{2}   $ 
  
  for all $k_n \leq l \leq k_{n+1}$ with $n>0$.

\item[For the construction:] 
  
  \item[C6.] For every $m \geq k_n$ there is a finite set $Y_m \subseteq m$ such that
\begin{itemize}
    \item $Y_m \cap k_n = Z_n$
    \item $\epsilon^\prime \leq \frac{|Y_m \cap l|}{l} \leq 1- \epsilon^\prime $ for all $k_n \leq l \leq m$.
    \item $\displaystyle  \frac{d_{l}(\bigcap_{i \in E} B_i \cap Y_m)}{\prod_{i\in E}d_{l}(B_i)d_{l}(Y_m)} \in \left( 1-\frac{\delta_n}{15}, 1+ \frac{\delta_n}{15} \right)$ for all $k_n \leq l \leq m$.
\end{itemize}

\end{description}

\begin{description}

   \item[Intervals:] There is $k_n^\prime \geq k_n$ such that
    
   \item[C7.] $\displaystyle |Z_{n+1} \cap [k_n^\prime, k_{n+1}) | > (1-\epsilon^\prime)|Z_{n+1} \cap k_{n+1} |$

  \item[C8.] $\displaystyle r_n \Vdash Z_{n+1} \cap [k_n^\prime, k_{n+1}) = \dot{X}_\alpha \cap [k_n^\prime, k_{n+1}) $   
\end{description}

\end{multicols}

It is clear that (C1)-(C8) taken together for all $n<\omega$ imply that $A_\alpha:= \bigcup_{n<\omega} Z_n$ is moderate and satisfies the property (i) and (ii) with the intervals $I_n:=[k_n^\prime, k_{n+1})$. Now, we are going to begin our construction.

For $n=0$, choose any $k_0$, $q^\prime_0 \leq q_0$ and  $Z_0$ such that (C1), (C2), (C4) and (C6) hold. There is nothing to show yet for (C3), (C5), (C7) and (C8). 

Now assume that we have obtained $k_n \in \omega$, $q_n^\prime \leq q_n$ and $Z_n$ such that (C1), (C2), (C4) and (C6) hold for $n$. We will construct $r_n \leq q_n^\prime$, $k_{n+1} \in \omega$, $q_{n+1}^\prime \leq q_{n+1}$ and $Z_{n+1}$ such that (C3), (C5), (C7) and (C8) hold for $n$ and (C1), (C2), (C4) and (C6) hold for $n+1$. First, note that $Z_{n}$, $\frac{\delta_n}{15}$ and $\epsilon^\prime$ satisfies the conditions of lemma \ref{equalizessizes} since (C1) and (C6) holds for $n$. Then, we can find  $k_n^\prime \geq k_n$, a condition $q_n^{\prime\prime} \leq q_n^\prime$ and a finite set $Z_n^\prime \subseteq k_n^\prime $ such that 
\begin{itemize}
    \item $Z^\prime_n \cap k_n = Z_n$.
    \item $\displaystyle \frac{(\epsilon^\prime)^2}{2} \leq \frac{|Z^\prime_n \cap l|}{l} \leq 1- \frac{(\epsilon^\prime)^2}{2} $ for all $k_n \leq l < k^\prime_n$.
    \item $\displaystyle  \frac{d_{l}(\bigcap_{i \in E} B_i \cap Z^\prime_n)}{\prod_{i\in E}d_{l}(B_i)d_{l}(Z^\prime_n)} \in \left( 1- \delta_n, 1+ \delta_n \right)$ for all $k_n \leq l < k^\prime_n$ and for any $E \subseteq n$.
    \item $\displaystyle \epsilon^\prime \leq \frac{|Z^\prime_n \cap k^\prime_n|}{k^\prime_n} \leq 1- \epsilon^\prime$ 
    \item $\displaystyle  \frac{d_{k^\prime_n}(\bigcap_{i \in E} B_i \cap Z^\prime_n)}{\prod_{i\in E}d_{k^\prime_n}(B_i)d_{k^\prime}(Z^\prime_n)} \in \left( 1- \frac{\delta_n}{15}, 1+ \frac{\delta_n}{15} \right)$ for any $E \subseteq n$.
    \item $q_n^{\prime\prime} \Vdash \displaystyle \frac{|Z^\prime_n \cap k^\prime_n|}{|\dot{X}_\alpha \cap k^\prime_n|}, \frac{|\dot{X}_\alpha \cap k^\prime_n|}{|Z^\prime_n \cap k^\prime_n|} \in \left(1 -\frac{\delta_n}{15}, 1 + \frac{\delta_n}{15}\right)$
\end{itemize}
Now, find a condition $q_{n+1}^\prime \leq q_{n+1}$ and $k_n^{\prime\prime} \geq k_n^\prime$ such that for all  $E \subseteq \{0, ..., n+1\}$ we have that
\[
\displaystyle q_{n+1}^\prime \Vdash  \frac{d_{l}(\bigcap_{i \in E} B_i \cap \dot{X}_\alpha)}{\prod_{i\in E}d_{l}(B_i)d_{l}(\dot{X}_\alpha)} \in \left( 1-\frac{\delta_{n+1}}{15}, 1+ \frac{\delta_{n+1}}{15}  \right) 
\]
and
\[
\displaystyle q_{n+1}^\prime \Vdash  \frac{\prod_{i\in E}d_{l}(B_i)d_{l}(\dot{X}_\alpha)}{d_{l}(\bigcap_{i \in E} B_i \cap \dot{X}_\alpha)} \in \left( 1-\frac{\delta_{n+1}}{15}, 1+ \frac{\delta_{n+1}}{15}  \right) 
\]
for all $l\geq k_n^{\prime\prime}$. This is possible since the assumption in property (ii) is true. This implies that (C2) holds for $n+1$. Next, we find $r_n \leq q_n^{\prime\prime}$ and a sufficiently large $k_{n+1} > k_n^{\prime\prime}$ such that for all  $E \subseteq \{0, ..., n+1\}$ we have
\begin{equation}\label{equaton for independence}
\displaystyle r_n \Vdash  \displaystyle  \frac{d_{k_{n+1}}(\bigcap_{i \in E} B_i \cap (Z_n^\prime \cup (\dot{X}_\alpha \cap [k_n^\prime, k_{n+1}))) )}{\prod_{i\in E}d_{k_{n+1}}(B_i)d_{k_{n+1}}(Z_n^\prime \cup (\dot{X}_\alpha \cap [k_n^\prime, k_{n+1})))} \in \left( 1- \frac{\delta_{n+1}}{15}, 1+ \frac{\delta_{n+1}}{15} \right)   
\end{equation}
\begin{equation}\label{equation for moderacy}
\displaystyle r_n \Vdash \epsilon_0 \leq \frac{\dot{X}_\alpha \cap [k_n^\prime, k_{n+1})}{k_{n+1}-k_n^\prime} \leq  1- \epsilon_0  
\end{equation}
and
\[
\displaystyle r_n \Vdash |\dot{X}_\alpha \cap [k_n^\prime, k_{n+1})|   > (1-\epsilon^\prime)|\dot{X}_\alpha \cap [k_n^\prime, k_{n+1}) |    + (1-\epsilon^\prime) |Z_{n}^\prime |
\]
Besides, we can assume that $r_n$ decides $\dot{X}_\alpha \cap k_{n+1}$. Specifically, there is $X_n \subseteq [k_n^\prime, k_{n+1})$ such that $r_n \Vdash X_n=\dot{X}_\alpha \cap [k_n^\prime, k_{n+1})$. All this is also possible because of the assumption in property (ii), lemma \ref{stabilizepieces} and the following is forced
\[
\begin{array}{rl}
      \displaystyle \limsup\limits_{l\rightarrow \infty} \frac{|\dot{X}_\alpha \cap l|}{l} &= \displaystyle \limsup\limits_{l\rightarrow \infty} \frac{|\dot{X}_\alpha \cap [k_n^\prime, l)|}{l-k_n^\prime} \\
      \displaystyle \liminf\limits_{l\rightarrow \infty} \frac{|\dot{X}_\alpha \cap l|}{l} &= \displaystyle \liminf\limits_{l\rightarrow \infty} \frac{|\dot{X}_\alpha \cap [k_n^\prime, l)|}{l-k_n^\prime}
\end{array}
\]
Let $Z_{n+1}:=Z_n^\prime \cup X_n$. Note that (C7) and (C8) hold for $n$, and (C1) holds for $n+1$ by definition of $Z_{n+1}$. Let us see that the remaining conditions hold. Consider $W$ such that $r_n \Vdash W=\dot{X}_\alpha \cap k_{n+1}$.

\begin{description}

\item[Condition (C5).] By construction $
\frac{(\epsilon^\prime)^2}{2} \leq \frac{|Z_n^\prime \cap l|}{l} \leq 1- \frac{(\epsilon^\prime)^2}{2}
$ for $k_n \leq l < k_n^\prime$ and $\displaystyle \epsilon^\prime \leq \frac{|Z^\prime_n \cap k^\prime_n|}{k^\prime_n} \leq 1- \epsilon^\prime$. Since 
\[
\epsilon^\prime \leq  \frac{|W \cap k_n^\prime|}{k_n^\prime}, \frac{|W \cap l|}{l} \leq 1-\epsilon^\prime
\]
for any $l$ with $k_n^\prime \leq l \leq k_{n+1}$, by lemma \ref{three times} we get
\[
\frac{(\epsilon^\prime)^2}{2} \leq \displaystyle d_{l}(Z_{n+1}) \leq 1- \frac{(\epsilon^\prime)^2}{2}
\]
Thus (C5) holds for $n$.

\item[Condition (C4).]  Note that by Eq. (\ref{equation for moderacy}) and (C8) for $n$ we have
\[
\epsilon_0   \leq \frac{X_n}{k_{n+1}-k_n^\prime} \leq 1-\epsilon_0 
\]
Thus, we can apply lemma \ref{union of intervals} to $R:= k_n^\prime$, $S:=[k_n^\prime, k_{n+1})$, $A=Z_n^\prime$ and $B=X_n$ to obtain
\[
\epsilon_0 - \frac{1}{c} \leq \frac{Z_{n}^\prime\cup X_n}{k_n^\prime \cup [k_n^\prime, k_{n+1})} = d_{k_{n+1}}(Z_{n+1}) \leq 1-\epsilon_0 + \frac{1}{c}
\]
where $c=\frac{k_{n+1}-k_n^\prime}{k_n^\prime}$. Note that we can choose $k_{n+1}$ sufficiently large as to guarantee $\epsilon^\prime < \epsilon_0 - \frac{1}{c}$, so (C4) holds for $n+1$.    

\item[Condition (C6).] Let $l \geq k_{n+1}$. Let $X_l$ be any finite set that is a witness of $\dot{X}_\alpha \cap [k_n^\prime, l)$, that is, there is a condition $r \leq r_n$ such that $r \Vdash X_l = \dot{X}_\alpha \cap [k_n^\prime, l)$. Then, by the previous arguments, $Y_l:= Z_n^\prime \cup X_l = Z_{n+1} \cup X_l \setminus X_n$ satisfies the inequalities of the condition (C6). Thus (C6) holds for $n+1$.

\item[Condition (C3).] By construction we already have
\[
\displaystyle  \frac{d_{l}(\bigcap_{i \in E} B_i \cap Z_{n+1})}{\prod_{i\in E}d_{l}(B_i)d_{l}(Z_{n+1})} \in \left( 1- \delta_n, 1+ \delta_n \right) 
\]
for all $k_n \leq l < k^\prime_n$ and $E \subseteq \{0, ..., n\}$. Besides, by (C2) and (C8) for $n$ we have  
\begin{itemize}
    \item $\displaystyle  \frac{d_{k_n^\prime}(\bigcap_{i \in E} B_i \cap Z_n^\prime)}{\prod_{i\in E}d_{k_n^\prime}(B_i)d_{k_n^\prime}(Z_n^\prime)} \in \left( 1-\frac{\delta_n}{15}, 1+ \frac{\delta_n}{15}  \right)$ for any $E \subseteq \{0, ..., n\}$.
    \item $\displaystyle\frac{d_{l}(\bigcap_{i \in E} B_i \cap W)}{\prod_{i\in E}d_{l}(B_i)d_{l}(W)}, \frac{\prod_{i\in E}d_{l}(B_i)d_{l}(W)}{d_{l}(\bigcap_{i \in E} B_i \cap W)} \in \left( 1-\frac{\delta_n}{15}, 1+ \frac{\delta_n}{15}  \right) $ for $l \geq k_n^\prime$ and $E \subseteq \{0, ..., n\}$.
    \item $\displaystyle \frac{|Z^\prime_n \cap k^\prime_n|}{|W \cap k^\prime_n|}, \frac{|W \cap k^\prime_n|}{|Z^\prime_n \cap k^\prime_n|} \in \left(1 -\frac{\delta_n}{15}, 1 + \frac{\delta_n}{15}\right)$
\end{itemize}
 Thus, apply lemma \ref{joinpieces} to $Z_n^\prime$ and $W$ to get
 \[
 \displaystyle  \frac{d_{l}(\bigcap_{i \in E} B_i \cap Z_{n+1})}{\prod_{i\in E}d_{l}(B_i)d_{l}(Z_{n+1})} \in \left( 1- \delta_n, 1+ \delta_n \right) 
 \]
 for $k_n^\prime \leq l \leq k_{n+1} $ and $E \subseteq \{0, ..., n\}$. Therefore, (C3) holds for $(n)$.
 
\end{description}

\end{proof}

\bibliographystyle{amsalpha}
\bibliography{biblio}

\hspace{2mm}

\noindent \text{\small David Valderrama,} \textsc{\small Universidad de Los Andes (Bogotá).} 

\noindent \textit{\small E-mail address:} \text{\small \href{d.valderramah@uniandes.edu.co}{d.valderramah@uniandes.edu.co}}

\end{document}